\documentclass[twocolumn]{autart}
\usepackage{color}
\usepackage{tikz} 
\usepackage{float}
\usepackage{bm}
\usepackage{amsmath}
\usepackage{amssymb,amsfonts}
\usepackage{epstopdf}
\usepackage{graphicx}      
\usepackage{mathtools}
\usepackage{subcaption}
\usepackage[export]{adjustbox}
\usepackage{picins}

\AtBeginDocument{
  \setlength{\abovedisplayskip}{4pt plus 2pt minus 1pt}%
  \setlength{\belowdisplayskip}{4pt plus 2pt minus 1pt}%
  \setlength{\abovedisplayshortskip}{4pt plus 2pt minus 1pt}%
  \setlength{\belowdisplayshortskip}{4pt plus 2pt minus 1pt}%
}
\def\dref#1{(\ref{#1})}
\renewcommand{\cline}{{\mathbb C}}
\newcommand{\fline}  {{\mathbb F}}
\newcommand{\nline}  {{\mathbb N}}
\newcommand{\rline}  {{\mathbb R}}
\newcommand{\tline}  {{\mathbb T}}

\newcommand{\qq}   {{\bf q}}
\newcommand{\uu}   {{\bf u}}
\newcommand{\xx}   {{\bf x}}
\newcommand{\yy}   {{\bf y}}
\newcommand{\GGG}  {{\mathbf G}}
\renewcommand{\gg} {{\mathbf g}}
\newcommand{\HHH}  {{\mathrm H}}

\newcommand{\PPP}  {{\mathbf P}}

\newcommand{\Sig}  {{\mathbf \Sigma}}

\newcommand{\dd}   {{\rm d}\hbox{\hskip 0.5pt}}
\renewcommand{\leq} {\leqslant}
\renewcommand{\geq} {\geqslant}
\newcommand{\Ran} {{\mathrm {Ran}}}
\newcommand{\Ker} {{\mathrm {Ker}}}
\newcommand{\Ascr} {{\mathcal A}}

\newcommand{\Dscr} {{\mathcal D}}

\newcommand{\Gscr} {{\mathcal G}}
\newcommand{\Hscr} {{\mathcal H}}
\newcommand{\Jscr} {{\mathcal J}}

\newcommand{\Lscr} {{\mathcal L}}

\newcommand{\Nscr} {{\mathcal N}}

\newcommand{\mm}    {{\hbox{\hskip 0.5pt}}}
\newcommand{\m}     {{\hbox{\hskip 1pt}}}
\newcommand{\n}     {{\hbox{\hskip -5pt}}}
\newcommand{\nm}    {{\hbox{\hskip -3pt}}}

\newcommand{\bluff} {{\hbox{\raise 15pt \hbox{\mm}}}}
\newcommand{\sbluff}{{\hbox{\raise 10pt \hbox{\mm}}}}
\newcommand{\Om}    {{\Omega}}

\renewcommand{\d}   {{\rm d}}

\renewcommand{\o}   {{\omega}}
\renewcommand{\e}   {{\varepsilon}}
\newcommand{\FORALL}{{\hbox{$\hskip 10mm \forall \;$}}}
\newcommand{\rarrow}{\mathop{\rightarrow}}

\newcommand{\LE}[1] {{L^2([0,\infty);#1)}}
\newcommand{\LEloc}[1] {{L^2_{\rm loc}([0,\infty);#1)}}

\newcommand{\CD}     {{C\& D}}

\newcommand{\Dom}[1]{\Dscr(#1)}

\newcommand{\loc}{{\rm loc}}
\renewcommand{\Re}{{\rm Re\,}}
\renewcommand{\Im}{{\rm Im\,}}
\newcommand{\bbm}[1]{\left[\begin{matrix} #1 \end{matrix}\right]}
\newcommand{\sbm}[1]{\left[\begin{smallmatrix} #1
\end{smallmatrix}\right]}

\newcommand{\rfb}[1]{\mbox{\rm
(\ref{#1})}\ifx\undefined\stillediting\else:\fbox{$#1$}\fi}
\renewcommand{\theenumi}{\roman{enumi}}

\makeatletter
\renewcommand{\p@enumii}{}
\makeatother
\definecolor{forestgreen}{rgb}{0.13, 0.55, 0.13}
\def\dfrac{\displaystyle\frac}
\renewcommand{\theequation}{\arabic{section}.\arabic{equation}}
\newtheorem{theorem}{Theorem}[section]
\newtheorem{definition}{Definition}[section]
\newtheorem{proposition}{Proposition}[section]
\newtheorem{lemma}{Lemma}[section]
\newtheorem{remark}{Remark}[section]
\newtheorem{example}{Example}[section]
\newtheorem{assumption}{Assumption}[section]

\date{}
\begin{document}
\begin{frontmatter}
\title{The \m linear \m regulator \m problem \m for \m passive \m
       systems\\ with \m strong \m stability\thanksref{footnoteinfo}
       \vspace{-3mm}}
\thanks[footnoteinfo]{This work was supported by the Israel
Science Foundation-NSFC grant no. 3621/21,
by the National Natural Science Foundation of China (no. 62173348,
12161141013) and Hunan Basic Science Research Center for Mathematical
Analysis (2024JC2002).}
\author[First,Second]{Bingsen Li}\ead{bingsenli9@gmail.com}
\author[Second]{Hua-Cheng Zhou}\ead{hczhou@amss.ac.cn}
\author[First]{George Weiss}\ead{gweiss@tauex.tau.ac.il}
\address[First]{School of Electrical and Computer Engineering, Tel
  Aviv University, Ramat Aviv 69978, Israel }
\address[Second]{School of Mathematics and Statistics, HNP-LAMA,
  Central South University, Changsha  410083, P.R. China }
%
\begin{abstract}
We study the output regulator problem for an impedance passive linear
plant, using the classical resonant internal model based controller.
The reference and disturbance signals are assumed to be linear
combinations of sine waves of known frequencies. {We prove that,
under mild assumptions and without requiring the stability of the
plant, this controller renders the closed-loop system strongly stable.
Furthermore, it solves the output regulator problem in the sense that
the tracking error tends to zero when filtered -- meaning that passing
the error through a first-order low-pass filter yields a continuous
signal that converges to zero.} We give two examples (both models of
engineering systems) to illustrate the results. \vspace{-2mm}
\end{abstract}

\begin{keyword}
   Output regulator problem, passive system, strong stability,
   disturbance rejection, internal model principle
\end{keyword}

\end{frontmatter}

\section{Introduction} \label{sec1} 
\setcounter{equation}{0}

The \textit{output regulator problem} (also called the regulator
problem) has attracted extensive attention and research in control
theory. In this problem, the reference and disturbance signals
acting on the plant are generated by a marginally stable exosystem,
and the aim is to design a stabilizing feedback controller such that
the reference signal is tracked asymptotically by the system output.
In the 1970's, the robust output regulator problem for finite
dimensional linear systems was studied, based on the newly discovered
\textit{internal model principle}, by \cite{FraWon} and
\cite{MR0406616}. The internal model principle, which characterizes
the controllers that achieve robust output regulation, is a primary
tool in control theory. It tells us that, under suitable assumptions,
the regulator problem can be solved by including an appropriate model
of the exosystem in the controller and by selecting the controller's
remaining parameters in a way that ensures the stability of the
closed-loop system composed of the plant and the controller. The
principle has been extended to nonlinear systems, see for instance,
\cite{MR1015932}, \cite{knobloch2012topics}, \cite{byrnes1997output}
and \cite{astolfi2003immersion}, for a brief overview see also Sect.
2-3 of \cite{weiss2019minimal}.

In the last 25 years, many results have appeared about the output
regulation of \textit{distributed parameter systems} (DPS).
\cite{MR1807306} considers the output regulator problem for DPS
with a finite{-}dimensional exosystem, for a plant with bounded
control and observation operators, while \cite{MR2285715},
\cite{MR2735508} give results about the output regulation of DPS with
an infinite{-}dimensional exosystem. In \cite{MR3285896}, the
internal model principle is generalized to DPS with unbounded control
and observation operators. However, they still require that certain
operators in the dynamic error feedback system are bounded.
\cite{Paun2017rob} (part of a series of papers) investigates the
unbounded operators situation, but then the convergence of the
tracking error is more difficult to formulate, because the error may
not be defined pointwise (we will return to this point later).
\cite{art59} proposes a low gain controller based on the internal
model principle, for tracking and disturbance rejection for a stable
well-posed plant with a finite{-}dimensional exosystem, and
identifies a simple (not low gain based) controller when a certain
transfer function is positive. \cite{art66} has solved the state
feedback regulator problem for regular linear systems. {For
specific DPS, time-domain techniques such as backstepping
\cite{MR3537141}, \cite{art125}, feedforward \cite{GuoMeng}, and
Lyapunov-based methods \cite{Andrieu} also provide effective control
tools.}

For the case of unknown frequencies, an adaptive method has been
developed in \cite{Marino2003TAC} and \cite{Marino2013Auto} for
systems described by ordinary differential equations and then
adapted in \cite{GuoZhao2022Adptive} for PDE systems.

An important step in solving the regulator problem is to make sure
that the closed-loop system is stable. This is often done by adding an
observer-based controller to the system formed by the plant and the
internal model. Sometimes we are in the fortunate situation that the
coupled system formed from the plant and the internal model is stable
if we choose the parameters of the internal model correctly, and then
the solution of the regulation problem becomes much more simple and
elegant {(but the performance may be limited)}. This has been
explored in \cite{davison1976multivariable}, and for
infinite-dimensional systems in \cite{Hamalainen2000},
\cite{Lassi2019}, \cite{art59} and \cite{art81}. This paper is a
contribution to this line of investigation, where we consider a
possibly unstable infinite-dimensional plant.

The stability properties (including the strong stability) of the
feedback interconnection of two impedance passive linear
time-invariant systems, one of which is finite-dimensional, have been
investigated in \cite{art114}. The finite-dimensional subsystem is
assumed to be input-output stable, so that it cannot be an
\textit{internal model controller} (\textbf{IMC}). We refer to
\cite{art114} and \cite{Lassi2025JFA} for other references on the
stability of coupled infinite-dimensional systems. The conference
paper \cite{art136} shows the strong stability of a similar coupled
system, but now the finite-dimensional subsystem is allowed to have
imaginary eigenvalues, as long as they do not coincide with the zeros
or the eigenvalues of the DPS. We use results from \cite{art136},
while also providing the proof of some key statements that were not
proved there.

We consider coupled systems consisting of a well-posed and impedance
passive linear system $\Sig_p$ (the plant) and an \textbf{IMC},
connected in feedback as shown in Fig.~\ref{fig:blocks}. The {\em
disturbance signals} $w$ and $v$, and the {\em reference signal} $r$
are supposed to have the following form:
\begin{subequations} \label{exosig}
   \begin{align} & w(t) \m=\m \Sigma_{j\in\Jscr} w_j e^{i\o_j t},
   \quad v(t) \m=\m \Sigma_{j\in\Jscr} v_j e^{i\o_j t},
   \label{d1}\m\\ & r(t) \m=\m \Sigma_{j\in\Jscr} r_j e^{i\o_j t},
   \label{d2} \end{align}
\end{subequations} \vspace{1mm} \noindent \m \hspace{-2.5mm}
where $\Jscr=\{1,2,\ldots n\}$. The above functions can represent a
combination of step functions of arbitrary magnitudes and sinusoidal
functions of arbitrary amplitudes and initial phases, which is
commonly encountered in engineering. If these signals are real, then
the set of frequencies $\Om=\{\o_j\ |\ j\in\Jscr\}$ must be
symmetric: $-\Om=\Om$.

We restrict the number of the frequencies of disturbance and reference
signals to be finite. This limitation is necessary to prevent the
following robustness issue: if we employ an error feedback controller
that drives the {\em tracking error} \vspace{-2mm}
\begin{equation} \label{trac_er}
   e(t) \m=\m y(t)-r(t)
\end{equation} \vspace{1.5mm} \noindent \m \hspace{-2.5mm}
to zero, then all the points $i\o_j$ must be poles of the controller,
following the internal model principle. As noted by \cite[Theorem 1.2]
{logemann1996conditions}, the closed-loop system cannot be robustly
stable with respect to small delays in the feedback loop when the
plant or the controller has an infinite number of unstable poles.

We assume that $\Sig_p$ is a well-posed linear system with state space
$X$, with inputs $w$ and $u$, and output $y$. {The property of
well-posedness ensures that on any finite time interval, inputs in
$L^2$ produce continuous state trajectories and outputs in $L^2$. For
the background on well-posed systems, we refer to
\cite{staffans2005well}, \cite{art11}, \cite{art12}, \cite{art10}.}
The disturbance signal $w$ and the control input $u$ take values in
the Hilbert spaces $W$ and $U=\cline^p$, respectively. The output
signal $y$ of $\Sig_p$ and the reference signal $r$ also take values
in $U$. The transfer function of $\Sig_p$ from $[w\ \ u]^\top$ to $y$
is denoted by \m $\PPP=[\PPP_w\ \PPP_u]$.

We assume that $\Sig_p$ is {\em impedance passive} from $u$ to $y$,
which means that if $w=0$, then for any initial state $x(0)\in X$, any
$\tau>0$ and any $u\in L^2([0,\tau];U)$, {the state trajectory
$x$ satisfies (see for instance \cite{staffans2002passive,%
staffans2005well,art69})
\begin{equation} \label{Tesla}
   \|x(\tau)\|^2-\|x(0)\|^2 \leq\m 2\m\Re \int_0^\tau \langle
   u(t),y(t) \rangle \dd t.
\end{equation}
This is equivalent to the fact that, for almost every $t\geq 0$,
\begin{equation} \label{Energix}
   \frac{\dd}{\dd t}\m \|x(t)\|^2 \m\leq\m 2\m\Re\langle u(t),y(t)
   \rangle \m.
\end{equation}

We denote by $\cline_0$ the open right half-plane in $\cline$. For any
bounded operator $T$ acting on a Hilbert space, we denote $\Re T=
\half(T+T^*)$. The inequality \rfb{Tesla} implies that $\PPP_u$ is
analytic on $\cline_0$ and (see \cite{staffans2002passive})
\begin{equation} \label{Hangzhou}
   \Re \PPP_u(s) \m\geq\m 0 \FORALL s\in \cline_0 \m.
\end{equation}
Transfer functions with the property \rfb{Hangzhou} are called {\em
positive}. For a beautiful overview of positive transfer functions
we refer to \cite{guiver2017transfer}.

{
A linear and time-invariant DPS with state space $X$ whose operator
semigroup $\tline$ satisfies $\tline_t x_0\to 0$ for any $x_0\in X$,
is called \emph{strongly stable} (for the standard terminology and
background theory, see \cite[Definition 5.4.1]{RCurtainHZwart-book},
\cite[V.Definition 1.1]{EngNag}, and \cite{art69}). }

Our goal is to design a controller $\Sig_c$ that strongly stabilizes
the system and makes the tracking error converge to zero, while having
only little information about the plant. $\Sig_p$ should be
interconnected with $\Sig_c$, as in Fig.~\ref{fig:blocks}, leading to
the closed-loop system $\Sig_L$ with inputs $w,v,r$ and outputs
$y,u^{out}$. $\Sig_c$ should solve the \textit{output regulator
problem}, stated below.

\textbf{Output regulator problem:} find $\Sig_c$ such that
\begin{enumerate}
   \setlength{\itemsep}{1pt} \setlength{\parskip}{1pt}
   \item the closed-loop system $\Sig_L$ from Fig.~\ref{fig:blocks}
      is well-posed and strongly stable;
   \item any state trajectory of \m $\Sig_L$ is bounded, in the
      presence of $w,v$ and $r$ as in \rfb{exosig};
   \item the tracking error $e$, when low-pass filtered, converges to
      zero.
\end{enumerate}

We now give some further background and comments about this
formulation of the output feedback regulator problem, tailored for
strongly stabilizable systems.

It is well known that many well-posed systems cannot be exponentially
stabilized by a well-posed error feedback controller. Since the 1970s,
researchers have searched for conditions under which a linear DPS is
exponentially stabilizable, see for instance \cite{datko1971linear},
\cite{jacob1999equivalent}, \cite{REBARBER1990241}. There is a theorem
in \cite{art34} which characterizes the necessary and sufficient
condition for a well-posed system to be exponentially stabilized by a
controller $\Sig_c$:

\begin{theorem} \label{Theorem:expon_stabi_estimatable}
Let $\Sig_p$ be a well-posed linear system and $\Sig_c$ a controller
as in Fig.~\ref{fig:blocks}. $\Sig_c$ is a stabilizing controller
for $\Sig_p$ if and only if the following conditions hold:
\begin{enumerate}
   \item Both $\Sig_p$ and $\Sig_c$ are optimizable and estimatable.
   \item The closed{-}loop system is input-output stable.
\end{enumerate}
\end{theorem}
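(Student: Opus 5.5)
This is the theorem of Weiss and Curtain (the result attributed to \cite{art34}). I would reproduce its proof in the framework of well-posed linear systems. Here ``stabilizing'' means that the closed-loop system $\Sig_L$ is well-posed and exponentially stable. ``Optimizable'' means that for every initial state there is an $L^2$ input making the state and output square integrable. ``Estimatable'' is the dual notion, obtained by output injection.

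\textbf{Necessity.} Suppose $\Sig_L$ is exponentially stable. Then all its input-output maps are bounded on $L^2$, which gives condition 2. For optimizability of $\Sig_p$, take $x_p(0)$ arbitrary and $x_c(0)=0$, and run the closed loop with zero external inputs. Exponential stability makes $x_p$, $u$ and $y$ all $L^2$, so this particular $u$ is an admissible optimizing input. Estimatability of $\Sig_p$ follows by duality: the dual of $\Sig_L$ is the feedback interconnection of the duals $\Sig_p^d$ and $\Sig_c^d$, and it is again exponentially stable. Hence $\Sig_p^d$ is optimizable, which is the same as $\Sig_p$ being estimatable. The plant and controller enter symmetrically, so the same argument applies to $\Sig_c$.

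\textbf{Sufficiency.} Here the main tool is the characterization that a well-posed system which is both optimizable and estimatable is exponentially stable if and only if it is input-output stable. I would then show that $\Sig_L$ inherits optimizability and estimatability from its components, with the extra inputs $w,v,r$ and outputs $y,u^{out}$ available.

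To prove optimizability of $\Sig_L$ from $(x_p(0),x_c(0))$, I would first choose an optimizing input for $\Sig_p$ and one for $\Sig_c$ separately. I would then realize these inputs in the closed loop by choosing the external signals $v$ and $r$ appropriately. The interconnection equations are invertible in this sense, because well-posedness of the loop means $I-\PPP_u\GGG$ has a well-posed inverse. The resulting closed-loop signals are $L^2$ combinations, so the state is $L^2$.

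Estimatability of $\Sig_L$ follows from the dual argument. Finally, input-output stability of $\Sig_L$ (condition 2) together with optimizability and estimatability yields exponential stability of its semigroup.

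\textbf{Main obstacle.} The delicate step is the lemma ``optimizable + estimatable + input-output stable $\Rightarrow$ exponentially stable.'' My plan for it is as follows.
\begin{enumerate}
\item Write the state as $x(t)=\tline_t x_0$.
\item Use estimatability to express $\tline_t x_0$ through an output-injected stable system driven by the output $y$.
\item Bound $y$ in $L^2$ using input-output stability and the optimizing input provided by optimizability.
\item Conclude that $\int_0^\infty\|\tline_t x_0\|^2\,\dd t\le C\|x_0\|^2$, and apply Datko's theorem to get exponential stability.
\end{enumerate}
Some care is needed with admissibility of the unbounded control and observation operators, to ensure that all the compositions above are well-posed.
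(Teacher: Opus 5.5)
The paper gives no proof of this theorem; it is quoted from Weiss and Curtain \cite{art34}. Your outline is the standard argument for it: necessity by restricting exponentially stable closed-loop trajectories and dualizing; sufficiency by showing that the closed loop inherits optimizability and estimatability (realizing separately chosen optimizing inputs through $v$ and $r$), then invoking the lemma that an optimizable, estimatable, input-output stable well-posed system is exponentially stable. The plan is sound. The one nontrivial ingredient you leave as a black box is that estimatability yields an exponentially stable output-injection representation of $\tline$ driven by $y$; for well-posed systems this follows by applying the LQ/dynamic-programming construction (exponentially stable optimal closed-loop semigroup with an admissible optimal feedback) to the dual system and taking adjoints.
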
 \vspace{2mm}

For $\Sig_p$ optimizability is understood with respect to the control
input $u$. Thus, if $\Sig_p$ would be exponentially stabilized by a
well-posed controller, then it would be optimizable and estimatable
(as defined in the cited reference).
There are many well-posed systems that are not optimizable or not
estimatable. For such systems, strong stabilization is the best that
we can hope for. For example, \cite{morris2014modeling} have
considered a voltage-actuated piezoelectric beam with magnetic effects
and proved that for almost all system parameters, the system is not
optimizable but it is strongly stabilizable. Strong stabilization by
colocated (static) feedback has been much studied, see
{\cite{art69,Lassi2025JFA}} and the references therein.

\begin{figure} \centering 
   \includegraphics[width=0.4\textwidth]{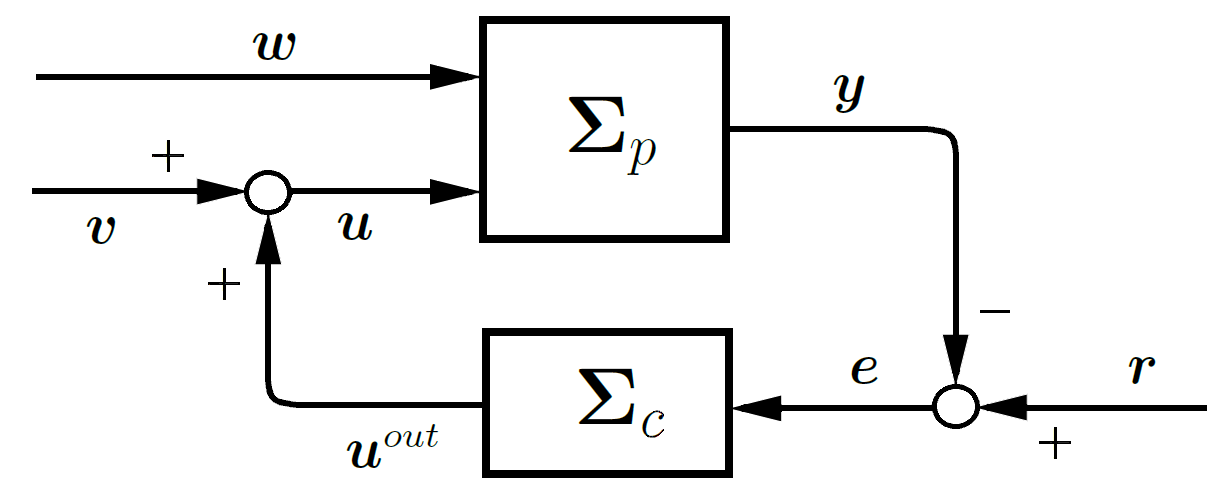}
   \caption{The well-posed plant $\Sig_p$ interconnected with the
   impedance passive \textbf{IMC} $\Sig_c$. The transfer functions of
   these systems are $\PPP=[\PPP_w\ \ \PPP_u]$ and $\gg$. The system
   $\Sig_p$ is assumed to be impedance passive from $u$ to $y$.}
   \label{fig:blocks}
\end{figure}

There are some problems for DPS with defining what exactly we mean by
$e$ converging to zero, because $y$, being the output signal of \m
$\Sig_p$ with a possibly unbounded observation operator, does not have
to be continuous, it is only known to be locally $L^2$. The same is
true for $e$, so that point evaluations of $e$ do not make sense, in
general. This problem has been encountered in several references, for
example, in {\cite{paunonen2016,MR3285896,art59}}, where
they replace the condition $\lim_{t\rarrow +\infty}e(t)=0$ with $e\in
L^2_{\alpha}([0,\infty);\cline^p)$, with $\alpha<0$. This condition
means that the function $t\mapsto{\rm e}^{-\alpha t}e(t)$ is in
$\LE{\cline^p}$. However, this concept is very strong, and it is
suitable for solutions of the regulator problem that lead to an
exponentially stable closed-loop system. We will work with a less
demanding concept of convergence to zero, defined as follows:
\vspace{1mm}

\begin{definition} \label{Nasrallahs_virgins}
Let $v\in L^1_\loc([0,\infty);\cline^p)$. We say that $v$ tends to
zero when filtered if for some $T>0$, the output of the low-pass
filter with transfer function $1/(1+Ts)$ {subject to the input
$v$, tends to zero as $t\to\infty$.}
\end{definition} \vspace{2mm}

By applying a scalar filter to a vector signal we mean, of course,
that we apply the filter componentwise. {This concept is
almost equivalent to the convergence to zero of the moving averages
of $\|v\|$, as used (in a similar context) in
\cite{Paun2017rob,Lassi2019}, see Propositions \ref{LaDefense} and
\ref{Fordo} below.} \vspace{2mm}

\begin{assumption} \label{ass_trans}
Let $A$ be the semigroup generator of $\Sig_p$. We assume that
$i\o_j\in\rho(A)$ and $\PPP_u(i\o_j)$ is invertible, for all
$j\in\Jscr$.
\end{assumption}

It is well known that for the regulator problem to be solvable, the
transmission zeros of the plant should not coincide with any
eigenvalue of the exosystem, see Assumption 2 in \cite{paunonen2016}
or Theorem 5.2 in \cite{art66}. Assumption \ref{ass_trans} implies
this condition. {Our Assumption \ref{ass_trans} is equivalent to
\cite[Assumption 4.1]{Lassi2019}.}

Our main contribution is: We construct a finite-dimensional
controller that solves the output regulation problem for impedance
passive well-posed systems satisfying Assumption \ref{ass_trans},
using only error feedback. Notably, this result holds even for systems
possessing infinitely many eigenvalues on the imaginary axis.

Apart from \cite{art114,art136}, the works most closely related to
this one are: \cite{GuiverLogeman} derived a stability theorem, more
general than the classical passivity and small-gain theorems, for the
feedback connection of two (possibly infinite dimensional)
time-invariant linear systems in the spirit of combined
passivity/small-gain results. They have shown that this stability
theorem is applicable for output regulation for periodic signals by
repetitive control. They build on characterizations of
positive-realness and on stability properties after various types of
feedback, developed in \cite{guiver2017transfer}. Another closely
related reference is \cite{Lassi2019}, which (like us) treats the
regulator problem for impedance passive well-posed linear systems.
\cite{Lassi2019} considers a broader range of topics than our paper:
it covers also the possible non-uniform (in particular, polynomial)
or exponential stability of the closed-loop system, and the set
$\Jscr$ of frequencies in \rfb{exosig} is allowed to be infinite (in
particular, repetitive control is covered). When restricting our
attention to the regulator problem with strong stabilty, as in our
paper, the sets of assumptions in the present work and in
\cite{Lassi2019} 
{represent different trade-offs,} see the
comments after Theorem \ref{theorem:stab}.

The remaining {\em contents of this paper} are as follows: In
Sect.~\ref{sec2} we study signals that tend to zero when filtered. In
Sect.~\ref{sec3} we consider the disturbance rejection and output
tracking problem for an impedance passive system $\Sig_p$ with a
finite{-}dimensional controller. We assume that the semigroup
generator $A$ of the plant $\Sig_p$ has at most a countable set of
spectral points on the imaginary axis, in which the eigenvalues are
observable.  The controller $\Sig_c$ is a finite{-}dimensional
\textbf{IMC}, tuned to the known disturbance frequencies $\o_j$ (that
is, it has poles at the points $i\o_j$). The \textbf{IMC} is designed
to be impedance passive and minimal. These constraints on the
\textbf{IMC} are not restrictive, it is easy to construct an
\textbf{IMC} that satisfies them. We show that the designed $\Sig_c$
solves the regulator problem.  In fact, we get the stronger statement
that the function $t\rarrow\| e(t)\|$ tends to zero when filtered. In
Sect.~\ref{sec4} we illustrate our results with two examples.

\textbf{Notation:} The set of all the bounded linear operators from
$X$ to $Y$ is denoted by $\Lscr(X,Y)$. If $Y=X$, then we write
$\Lscr(X)$ instead of $\Lscr(X,X)$. $\cline_\beta$ is the open
half-plane $\cline_\beta=\{s\in\cline|\m \Re s>\beta \}$. For any
Banach space $Z$, $H^\infty_\beta(Z)$ denotes the Hardy space
$$ H^\infty_\beta(Z) \m=\m \Big\{G:\cline_{\beta}\rarrow Z\ |\ G
   \text{ analytic and bounded} \Big\}.$$
When the space $Z$ is clear from the context, we write
$H^\infty_\beta$ for $H^\infty_\beta(Z)$. $\sigma(A),\sigma_p(A)$ and
$\rho(A)$ are, respectively, the spectrum, point spectrum and
resolvent sets of $A$.

\section{Signals converging to zero when filtered}
\setcounter{equation}{0} \label{sec2} 

In this section, we take a closer look at the concept of a signal that
tends to zero when filtered. The main result of this section is that
if a well-posed system is strongly stable, and its input signal is
zero, then its output signal tends to zero when filtered.

Signals that tend to zero when filtered have been introduced in
Definition \ref{Nasrallahs_virgins}. For consistency, we have to prove
that indeed the concept introduced here does not depend on the time
constant $T$ of the filter.

\begin{proposition} \label{missile}
Let $v\in L^1_\loc([0,\infty);\cline^p)$ and $T>0$ be such that
the continuous signal
\begin{equation} \label{ballistic}
  { v_T(t)} \m=\m \frac{1}{T} \int_0^t e^{-(t-\sigma)/T}
  v(\sigma) \dd \sigma
\end{equation}
converges to zero (as $t\rarrow\infty$). If we replace $T$ with any
$\tau>0$, the resulting signal {$v_\tau$} again converges to zero.
\end{proposition}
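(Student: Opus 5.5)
The plan is to express $v_\tau$ directly in terms of $v_T$, without going back to $v$. At the level of transfer functions $v_\tau=\frac{1+Ts}{1+\tau s}\,v_T$, and we have the partial fraction identity
$$\frac{1+Ts}{1+\tau s}\m=\m \frac{T}{\tau}+\Big(1-\frac{T}{\tau}\Big)\frac{1}{1+\tau s}.$$
This suggests the candidate
$$z(t)\m=\m \frac{T}{\tau}\m v_T(t)+\Big(1-\frac{T}{\tau}\Big)(v_T)_\tau(t),$$
where $(v_T)_\tau$ denotes the signal obtained by applying the formula \rfb{ballistic}, with $\tau$ in place of $T$, to the continuous function $v_T$.

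\textbf{Step 1: time-domain identities.} From \rfb{ballistic}, $v_T$ is absolutely continuous with $v_T(0)=0$ and
$$T\dot v_T+v_T=v \quad\text{a.e.}$$
Likewise $f_\tau$, for any $f\in L^1_\loc$, is the unique absolutely continuous solution of $\tau\dot f_\tau+f_\tau=f$ with $f_\tau(0)=0$. This follows from the variation of constants formula.

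\textbf{Step 2: identify $z$ with $v_\tau$.} Clearly $z(0)=0$. Differentiating and using both ODEs gives
$$\tau\dot z \m=\m (v-v_T)+\Big(1-\frac{T}{\tau}\Big)\big(v_T-(v_T)_\tau\big).$$
Adding $z$ to this, the $v_T$ and $(v_T)_\tau$ terms cancel, so $\tau\dot z+z=v$. By uniqueness of solutions of this linear ODE with zero initial value, $z=v_\tau$.

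\textbf{Step 3: each term of $z$ tends to zero.} The term $\frac{T}{\tau}v_T$ tends to zero by hypothesis. It remains to prove the following lemma: if $f$ is continuous on $[0,\infty)$ and $f(t)\rarrow 0$, then $f_\tau(t)\rarrow 0$.

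To prove the lemma, fix $\e>0$ and choose $t_0$ such that $\|f(\sigma)\|<\e$ for all $\sigma\geq t_0$. Split the integral in \rfb{ballistic} at $t_0$. The piece over $[0,t_0]$ is bounded by $e^{-(t-t_0)/\tau}\sup_{[0,t_0]}\|f\|$, which tends to zero as $t\rarrow\infty$. The piece over $[t_0,t]$ is bounded by $\e\cdot\frac1\tau\int_0^\infty e^{-s/\tau}\dd s=\e$. Hence $\limsup_{t\rarrow\infty}\|f_\tau(t)\|\leq\e$ for every $\e>0$, which proves the lemma.

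Applying the lemma with $f=v_T$ gives $(v_T)_\tau\rarrow 0$, hence $v_\tau=z\rarrow 0$.

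The only points needing care are the justification of the a.e. differentiation and uniqueness for $L^1_\loc$ inputs (Step 2), and the lemma in Step 3; both are standard, and I expect no real obstacle.
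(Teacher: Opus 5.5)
Your proof is correct and follows the paper's route: the same decomposition $\frac{1+Ts}{1+\tau s}=\frac{T}{\tau}+\bigl(1-\frac{T}{\tau}\bigr)\frac{1}{1+\tau s}$, so that $v_\tau$ is $\frac{T}{\tau}v_T$ plus a multiple of the low-pass filtered $v_T$; you additionally supply the time-domain justification and the elementary lemma (filtering a continuous signal that tends to zero gives a signal that tends to zero), both of which the paper treats as clear. You also correctly apply the second filter to $v_T$, whereas the paper's text says ``a low-pass filtered version of $v$'', which is a slip.
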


\begin{pf} \m The transfer function leading from {$v_T$ to $v_\tau$}
is \m $\HHH(s)=\frac{1+Ts}{1+\tau s}$, meaning that ${{\hat{v}}_\tau}
(s)=\HHH(s) {\hat{v}_T}(s)$. We decompose \vspace{-2mm}
$$ \HHH(s) \m=\m \frac{T}{\tau} + \frac{1-\frac{T}{\tau}}
   {1+\tau s} \m,$$
so that {$v_\tau$} can be decomposed into $\frac{T}{\tau}{v_T}$ plus
$1-\frac{T}{\tau}$ times a low-pass filtered version of {$v$}.
Clearly, both components tend to zero, hence $\lim_{t\rarrow\infty}
{v_\tau}(t)=0$. \hfill $\square$ \end{pf}

\vspace{-2mm}
From an engineering point of view, we can give the following
justification: any measuring or recording system encountered in
engineering has a finite bandwidth. Thus, when we look at a signal,
for instance on an oscilloscope, we are always looking at a low-pass
filtered version of the signal. If the bandwidth $1/T$ of the filter
is large, then we tend to ignore the presence of this low-pass filter.
When we see that a signal tends to zero, what we actually see is that
a low-pass filtered version of the signal tends to zero. For signals
of class $L^2_{\rm loc}$, point evaluations do not make sense, and
hence we cannot define convergence to zero in the classical way.

\begin{proposition} \label{Hodeida}
If $v\in\LE{\cline^p}$, then $v$ tends to zero when filtered.
\end{proposition}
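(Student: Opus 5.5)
By Definition~\ref{Nasrallahs_virgins} and Proposition~\ref{missile}, it is enough to show that for one fixed $T>0$ the filtered signal $v_T$ from \rfb{ballistic} converges to zero as $t\rarrow\infty$. Since $v\in\LE{\cline^p}$ implies $v\in L^1_\loc([0,\infty);\cline^p)$, the signal $v_T$ is well defined and continuous. I will estimate it directly with the Cauchy--Schwarz inequality, splitting the integral into an early part and a late part.

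Fix $\e>0$ and pick $t_0>0$ so large that $\int_{t_0}^\infty \|v(\sigma)\|^2\dd\sigma<\e^2$. For $t>t_0$ write
$$ v_T(t) \m=\m \frac{1}{T}\int_0^{t_0} e^{-(t-\sigma)/T}v(\sigma)\dd\sigma + \frac{1}{T}\int_{t_0}^{t} e^{-(t-\sigma)/T}v(\sigma)\dd\sigma .$$
For the second term, Cauchy--Schwarz together with $\int_{t_0}^t e^{-2(t-\sigma)/T}\dd\sigma\leq T/2$ gives the bound
$$ \frac{1}{T}\Big(\frac{T}{2}\Big)^{1/2}\Big(\int_{t_0}^\infty\|v(\sigma)\|^2\dd\sigma\Big)^{1/2} \m\leq\m \frac{\e}{\sqrt{2T}} .$$
For the first term, $e^{-(t-\sigma)/T}\leq e^{-(t-t_0)/T}$ on $[0,t_0]$, so it is bounded by $\frac{1}{T}e^{-(t-t_0)/T}\sqrt{t_0}\,\|v\|_{L^2}$, and this tends to zero as $t\rarrow\infty$.

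Hence $\limsup_{t\rarrow\infty}\|v_T(t)\|\leq \e/\sqrt{2T}$. Since $\e>0$ was arbitrary, $v_T(t)\rarrow 0$, which is the claim. There is no real obstacle here; the only point needing care is the split at $t_0$. A global Cauchy--Schwarz estimate over $[0,t]$ would only show that $v_T$ is bounded. The split works because the tail of $v$ is small in $L^2$ and the exponential kernel has an $L^2$ norm that does not depend on $t$.
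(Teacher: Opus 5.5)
Your proof is correct and follows essentially the same route as the paper: you split the convolution integral into an early part and a late part, then apply Cauchy--Schwarz to each, using the exponential decay of the kernel on the early part and the smallness of the $L^2$ tail on the late part. The only difference is cosmetic. The paper (with $T=1$) splits $v_T(2t)$ at the moving midpoint $t$, which gives the explicit bound $\frac{e^{-t}}{\sqrt{2}}\|v\|+\frac{1}{\sqrt{2}}\big(\int_t^\infty\|v(\sigma)\|^2\dd\sigma\big)^{1/2}$ with no $\varepsilon$-argument; you split at a fixed $t_0$ chosen from $\varepsilon$, and your estimate works for every $T>0$ directly, so the appeal to Proposition~\ref{missile} is not actually needed.
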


\vspace{-2mm}
\begin{pf} \m Thanks to Proposition \ref{missile}, we may take
$T=1$ in \rfb{ballistic}. We decompose the function ${v_T}$ from
\rfb{ballistic}:
$$ \m\ \ \ {v_T}(2t) \m=\m \int_0^t {\rm e}^{-(2t-\sigma)}
   v(\sigma) \dd\sigma + \int_t^{2t} {\rm e}^{-(2t-\sigma)} v(\sigma)
   \dd \sigma \m,$$
whence, using the Cauchy-Schwarz inequality,
\[ \begin{aligned}
   &\|{v_T}(2t)\| \leq \frac{{\rm e}^{-2t}}{\sqrt{2}} \left(
   {\rm e}^{2t}-1\right)^\half \left(\int_0^t \|v(\sigma)\|^2
   \dd \sigma \right)^\half \\ &\quad + \frac{{\rm e}^{-2t}}
   {\sqrt{2}} \left( {\rm e}^{4t}-{\rm e}^{2t} \right)^\half
   \left(\int_t^{2t} \|v(\sigma)\|^2 \dd \sigma \right)^\half
\end{aligned} \]
\[ \begin{aligned}
   & \leq \frac{{\rm e}^{-t}}{\sqrt{2}} \left(\int_0^t
   \|v(\sigma)\|^2 \dd \sigma \right)^\half + \frac{1}{\sqrt{2}}
   \left( \int_t^{2t} \|v(\sigma)\|^2 \dd\sigma \right)^\half \\
   & \leq \frac{{\rm e}^{-t}}{\sqrt{2}} \|v\| + \frac{1}
   {\sqrt{2}} \left( \int_t^\infty \|v(\sigma)\|^2 \dd \sigma
   \right)^\half.
\end{aligned} \]
The last integral above tends to zero as { $t\rarrow \infty$}, and
hence $\lim_{t\rarrow\infty}\|{v_T}(t)\|=0$. \hfill $\square$
\end{pf}

\vspace{-2mm}
There are examples of functions $v:[0,\infty)\rarrow\rline$ that are
continuous and do not tend to zero (when $t\rarrow\infty$), are not in
$L^2[0,\infty)$ but they tend to zero when filtered, for instance
$v(t)=\sin (t^2)$. There are also functions $v:[0,\infty)\rarrow
\rline$ that are continuous and positive, do not tend to zero (when
$t\rarrow\infty$), do not belong to $L^2[0,\infty)$, but they tend to
zero when filtered, for instance $v(t)=|\sin t|^t$. For the proofs of
these facts, see the Appendix. The following proposition should give a
bit more insight into functions that converge to zero when filtered.

\begin{proposition} \label{LaDefense}
Suppose that $v\in L^1_\loc([0,\infty);\cline^p)$ tends to zero
when filtered, $\tau>0$, and define\\ ${g}:[0,\infty)\rarrow
\cline^p$ by \vspace{-2mm}
\begin{equation} \label{Kiel}
  {g}(t) \m=\m \frac{1}{\tau} \int_{t-\tau}^t v(\sigma) \dd\sigma
\end{equation}
(moving averages of $v$). Then $\lim_{t\rarrow\infty} {g}(t)=0$.
\end{proposition}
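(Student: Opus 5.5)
The idea is to express the moving average ${g}$ through the filtered signal, using the fact that $v_T$ satisfies a first-order ODE. By Proposition \ref{missile} we may fix any $T>0$; take $T=1$ for simplicity, so that $v_1(t)=\int_0^t e^{-(t-\sigma)}v(\sigma)\dd\sigma$ tends to zero. This $v_1$ is absolutely continuous and satisfies
\[ \dot v_1(t) + v_1(t) \m=\m v(t) \quad \text{for a.e. } t\geq 0, \qquad v_1(0)=0 .\]
For $t\geq\tau$ the interval $[t-\tau,t]$ lies in $[0,\infty)$ (for $t<\tau$ we extend $v$ by zero to negative times, which does not affect the limit).

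Integrating the ODE over $[t-\tau,t]$ gives
\[ \tau\m {g}(t) \m=\m \int_{t-\tau}^t v(\sigma)\dd\sigma \m=\m v_1(t)-v_1(t-\tau) + \int_{t-\tau}^t v_1(\sigma)\dd\sigma . \]
This needs absolute continuity of $v_1$, which holds because $v\in L^1_\loc$ and $v_1$ is the convolution of $v$ with a smooth kernel on bounded intervals.

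Now let $t\to\infty$. The first two terms tend to zero by hypothesis. The integral is bounded in norm by $\tau\sup_{\sigma\geq t-\tau}\|v_1(\sigma)\|$, which also tends to zero because $v_1(\sigma)\to 0$. Hence $\|{g}(t)\|\leq \frac{2}{\tau}\sup_{\sigma\geq t-\tau}\|v_1(\sigma)\| + \sup_{\sigma\geq t-\tau}\|v_1(\sigma)\|\to 0$.

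There is no substantial obstacle. The only points to check are the a.e. validity of the ODE for $L^1_\loc$ input (a Lebesgue differentiation argument) and the reduction to $T=1$ via Proposition \ref{missile}. For general $T$ the same computation works with $T\dot v_T+v_T=v$, so that reduction is optional.
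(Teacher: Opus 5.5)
Your proof is correct and is essentially the paper's argument: the paper takes $T=\tau$ and uses the Laplace-domain factorization $\hat g(s)=\frac{1-e^{-\tau s}}{\tau s}(1+\tau s)\hat v_\tau(s)$ to split $g$ into the moving average of $v_\tau$ plus $v_\tau(t)-v_\tau(t-\tau)$, and this is exactly what your integration of $T\dot v_T+v_T=v$ over $[t-\tau,t]$ gives when $T=\tau$. Your time-domain derivation is slightly cleaner because it does not need a Laplace transform of an $L^1_{\rm loc}$ signal; the absolute continuity it relies on is most simply justified by $v_1(t)=e^{-t}\int_0^t e^{\sigma}v(\sigma)\,{\rm d}\sigma$ rather than by a ``smooth kernel'' argument, since the truncated kernel is not smooth at $0$.
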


\vspace{-2mm}
\begin{pf} \m Define ${v_\tau}$ by \rfb{ballistic}, with
$T=\tau$. According to Proposition \ref{missile},
$\lim_{t\rarrow\infty}{v_\tau}(t)=0$. In terms of Laplace
transforms we have
$$ {\hat{g}}(s) \m=\m \frac{1-e^{-\tau s}}{\tau s}\hat
   v(s) \m=\m \frac{1-e^{-\tau s}}{\tau s}(1+\tau s)
   {\hat v_\tau}(s) \m.$$
We see from here that we can decompose ${g}={g}_1+
{g}_2$, where ${g}_1(t)$ is the (moving) average of
${v_\tau}$ over $[t-\tau,t]$, while ${g}_2(t)=
{v_\tau}(t)-{v_\tau}(t-\tau)$. Clearly both components of
${g}$ tend to zero as $t\rarrow\infty$. \hfill $\square$ \m
\end{pf}

\vspace{-2mm}
The following is a partial converse of Proposition \ref{LaDefense}.

\begin{proposition} \label{Fordo}
Suppose that $v\in L^1_\loc[0,\infty)$, with {non-negative
values.} Let $\tau>0$ and define its moving averages \
${g}\nm:\nm[0,\infty)\rarrow[0,\infty)$ by \rfb{Kiel}. If \
$\lim_{t\rarrow\infty} {g}(t)=0$, then $v$ tends to zero when
filtered.
\end{proposition}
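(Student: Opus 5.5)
We will show directly that the filtered signal $v_T$ from \rfb{ballistic}, with the particular choice $T=\tau$, tends to zero. By Proposition \ref{missile} the choice of time constant does not matter, so this is enough. Non-negativity of $v$ is essential: it lets us bound $v_\tau$ by an integral of $v$ against a weight that is piecewise bounded by constants, with no cancellations to control. Note also that $g$ in \rfb{Kiel} is only defined for $t\geq\tau$ unless we extend $v$ by zero to negative times; we adopt that extension, so $g(t)=\frac1\tau\int_{\max(0,t-\tau)}^t v$.

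Fix $t>0$ and let $N=\lfloor t/\tau\rfloor$. We split the interval $[0,t]$ into the blocks $I_k=[t-(k+1)\tau,\,t-k\tau]\cap[0,\infty)$ for $k=0,1,\dots,N$. On $I_k$ the kernel satisfies $e^{-(t-\sigma)/\tau}\leq e^{-k}$. Since $v\geq 0$,
\[
0\leq v_\tau(t)=\frac1\tau\int_0^t e^{-(t-\sigma)/\tau}v(\sigma)\dd\sigma\leq \sum_{k=0}^{N} e^{-k}\,\frac1\tau\int_{I_k}v(\sigma)\dd\sigma\leq\sum_{k=0}^{N}e^{-k}\,g(t-k\tau).
\]
The last step uses that $\frac1\tau\int_{I_k}v$ is at most $g(t-k\tau)$, with equality except possibly for the final block, which is truncated at $0$.

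Now take $\e>0$. Choose $M$ such that $g(s)<\e$ for all $s\geq M$. The function $g$ is bounded on $[0,M]$ by some constant $C$, because $v\in L^1[0,M+\tau]$.

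We then split the sum into two parts. The terms with $t-k\tau\geq M$ contribute at most $\e\sum_{k\geq0}e^{-k}=\e\,e/(e-1)$. The terms with $t-k\tau<M$ have $k>(t-M)/\tau$, so they contribute at most $C\sum_{k>(t-M)/\tau}e^{-k}$, and this tends to $0$ as $t\to\infty$. Hence $\limsup_{t\to\infty} v_\tau(t)\leq \e e/(e-1)$ for every $\e>0$, which gives $v_\tau(t)\to0$. So $v$ tends to zero when filtered, in the sense of Definition \ref{Nasrallahs_virgins}.

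The only delicate points are the boundary bookkeeping near $\sigma=0$ and the tail split. Both are handled by the zero extension and the uniform bound $C$, so I do not expect a serious obstacle. The key structural step is the pointwise bound of $v_\tau$ by a geometrically weighted sum of moving averages, which is valid only because $v\geq0$.
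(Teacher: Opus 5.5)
Your proof is correct and is essentially the paper's argument: bound the kernel $e^{-(t-\sigma)/\tau}$ by $e^{-k}$ on blocks of length $\tau$, use $v\geq 0$ to dominate $v_\tau$ by a discrete convolution of $(e^{-k})$ with values of $g$, and conclude from $g\to 0$. The differences are cosmetic: you anchor the blocks at $t$ rather than on the grid $k\tau$, which spares you the paper's final step from $v_\tau(n\tau)\to 0$ to general $t$. You also prove the convolution-with-a-null-sequence fact explicitly, and with your zero extension the last block gives equality rather than only an inequality.
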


\vspace{-2mm}
\begin{pf} \m Define ${v_\tau}$ by \rfb{ballistic}, with
$T=\tau$. We have to show that ${v_\tau}$ converges to zero. We
have for any $n\in\nline$
$$ {v_\tau}(n\tau) \m=\m \frac{1}{\tau}\sum_{k=1}^n \int_{(k-1)
   \tau}^{k\tau} e^{-(n\tau-\sigma)/\tau} v(\sigma) \dd\sigma$$
$$ \leq\m \frac{1}{\tau} \sum_{k=1}^n e^{-(n-k)} \int_{(k-1)\tau}^{k
   \tau} v(\sigma)\dd\sigma \m=\m \sum_{k=1}^n e^{-(n-k)}g(k\tau)\m.$$
Thus, the sequence $(v_\tau(n\tau))$ is the convolution product of the
sequences $(e^{-n})$ and $(g(n\tau))$, the latter being convergent to
zero. It follows that $\lim_{n\rarrow\infty}v_\tau(n\tau)=0$, and from
here the statement follows with ease. \hfill $\square$ \m \end{pf}

For the remainder of this section, $X$ and $Y$ are Hilbert spaces,
$\tline$ is a strongly continuous semigroup on $X$, with generator
$A$, and $C:\Dscr(A)\rarrow Y$ is an {\em admissible observation
operator} for $\tline$. This means that for some (hence, for any)
$\tau>0$, the operator $\Psi_\tau:\Dscr(A)\rarrow L^2([0,\tau],Y)$,
\begin{equation} \label{Jeremy_Bowen}
  (\Psi_\tau x_0)(t) \m=\m C\tline_t x_0 \FORALL t\in [0,\tau]
  \vspace{1mm}
\end{equation}
has an extension in $\Lscr(X,L^2([0,\tau],Y))$. We denote also this
extension by $\Psi_\tau$. The function $y\in\LEloc{Y}$ which on any
interval $[0,\tau]$ coincides with $\Psi_\tau x_0$ is called the {\em
output function corresponding to the initial state} $x_0$ (see
\cite{art10} for details), and we write $y=\Psi_\infty x_0$.
$\Psi_\infty$ is called the {\em extended output map} of $(A,C)$
(\cite[Sect.~2]{art10}).

\begin{theorem} \label{Macron}
With $X, Y, A,\tline$ and $C$ as above, assume that $\tline$ is
strongly stable. For some initial state $x_0\in X$ let $y$ be the
corresponding output function. {Then the function $t\mapsto
\|y(t)\|^2$ (and hence also the function $t\mapsto\|y(t)\|$)
tends to zero when filtered.}
\end{theorem}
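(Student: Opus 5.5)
We will show that the moving averages of the nonnegative function $\|y(t)\|^2$ tend to zero, and then use Proposition \ref{Fordo}. Fix $\tau>0$ and let
$$ g(t) \m=\m \frac{1}{\tau}\int_{t-\tau}^t \|y(\sigma)\|^2\dd\sigma \qquad (t\geq\tau). $$

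The key identity is the time-invariance of the extended output map. For $t\geq\tau$, the restriction of $y$ to $[t-\tau,t]$, shifted back to $[0,\tau]$, equals $\Psi_\tau \tline_{t-\tau}x_0$. This holds because $y=\Psi_\infty x_0$ satisfies $(\Psi_\infty x_0)(\sigma+s)=(\Psi_\infty\tline_s x_0)(\sigma)$. For $x_0\in\Dscr(A)$ this is immediate from \rfb{Jeremy_Bowen}, and it extends to all of $X$ by density and continuity of $\Psi_\tau$. Therefore
$$ g(t) \m=\m \frac{1}{\tau}\|\Psi_\tau\tline_{t-\tau}x_0\|^2_{L^2([0,\tau],Y)} \m\leq\m \frac{1}{\tau}\|\Psi_\tau\|^2\,\|\tline_{t-\tau}x_0\|^2 . $$
By strong stability, $\|\tline_{t-\tau}x_0\|\to 0$, so $g(t)\to 0$ as $t\to\infty$.

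Since $\|y(\cdot)\|^2\in L^1_\loc$ is nonnegative, Proposition \ref{Fordo} shows that $t\mapsto\|y(t)\|^2$ tends to zero when filtered.

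For $\|y(t)\|$ there are two routes. One is to apply Cauchy--Schwarz to the moving averages,
$$ \frac{1}{\tau}\int_{t-\tau}^t\|y\|\dd\sigma \m\leq\m \Big(\frac{1}{\tau}\int_{t-\tau}^t\|y\|^2\dd\sigma\Big)^{1/2} \m=\m g(t)^{1/2}\to 0, $$
and then use Proposition \ref{Fordo} again. The other is to apply Cauchy--Schwarz directly to the filter: the kernel $\frac1T e^{-(t-\sigma)/T}$ has total mass at most $1$, so the filtered version of $\|y\|$ is bounded by the square root of the filtered version of $\|y\|^2$.

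The main obstacle is justifying the shift identity for general $x_0\in X$, since $y$ is only in $L^2_\loc$. This is handled by density together with the boundedness of $\Psi_\tau$ on $X$, which is exactly what admissibility provides.
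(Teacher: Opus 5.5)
Your proof is correct and is essentially the paper's argument. The paper bounds $\int_t^{t+1}\|y\|^2\,{\rm d}\sigma=\|\Psi_1\tline_t x_0\|^2\leq\|\Psi_1\|^2\|\tline_t x_0\|^2$, applies Proposition~\ref{Fordo} with $\tau=1$, and handles $\|y\|$ by Cauchy--Schwarz on the moving averages, which is your first route. The only difference is that you also justify the shift identity $y(\cdot+s)=\Psi_\infty\tline_s x_0$ by a density argument, which the paper takes for granted.
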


The conclusion of this theorem is stronger than the statement that
$y(t)$ tends to zero when filtered. For example, the function
$y(t)=\sin(t^2)$ (mentioned earlier) tends to zero when filtered, but
$|y(t)|$ does not tend to zero when filtered. Indeed, if it would,
then according to Proposition \ref{LaDefense} the moving averages of
$|y(t)|$ would converge to zero, and it is easy to see that this is
not true.

\vspace{-5mm} {
\begin{pf} \m For $x_0$ and $y$ as in the statement and $t\geq 0$,
define \vspace{-2mm}
$$ g(t) \m=\m \int_t^{t+1} \|y(t)\|^2 \dd t \m=\m \|\Psi_1
   \tline_t x_0\|^2 \m.$$
Clearly $g(t)\leq\|\Psi_1\|^2\cdot\|\tline_t x_0\|^2$, so that
the strong stability of $\tline$ implies that $g(t)\rarrow 0$.
According to Proposition \ref{Fordo} (with $\tau=1$ and $v(t)=\|y(t)
\|^2$), we conclude that $t\mapsto\|y(t)\|^2$ tends to zero when
filtered. Since (by Cauchy-Schwarz) $\int_t^{t+1}\|y(t)\|\dd t\leq
\left(\int_t^{t+1}\|y(t)\|^2\dd t\right)^\half$, we get the same
conclusion for $t\mapsto\|y(t)\|$. \hfill$\square$ \end{pf}}

\section{Error feedback controller design}
\setcounter{equation}{0} \label{sec3} 

In this section we solve the output regulator problem (with strong
stability, as defined in Sect.~\ref{sec1}) for an impedance passive
well-posed linear system $\Sig_p$, with the feedback structure as
shown in Fig.~\ref{fig:blocks}. First, we consider the strong
stability of $\Sig_p$ with an \textbf{IMC} connected in feedback. This
strong stability means that, when all the disturbance and reference
signals are zero, the states of both subsystems converge to zero. The
\textbf{IMC} $\Sig_c$ is finite{-}dimensional, minimal, and
impedance passive.

\begin{figure} \centering 
   \includegraphics[width=0.4\textwidth]{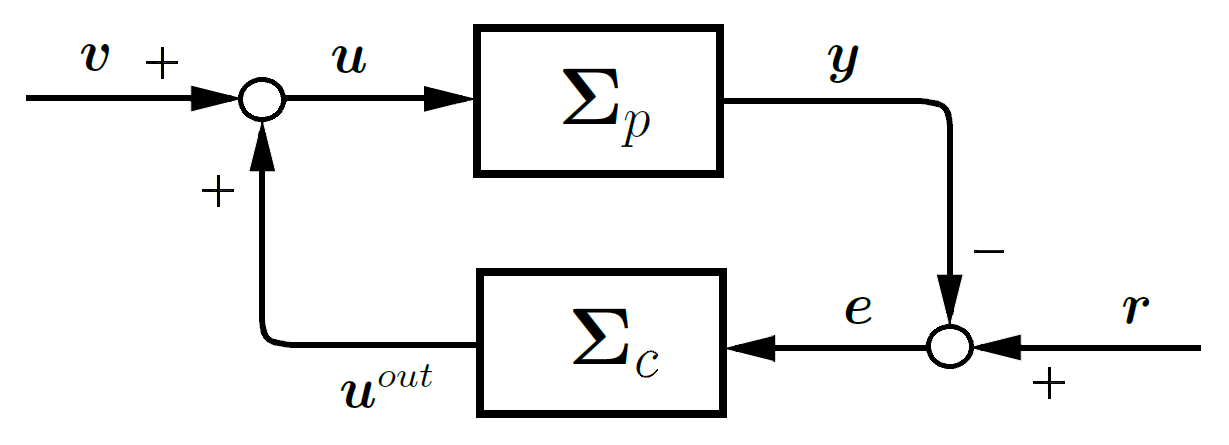}
   \caption{A coupled system $\Sig_K$ consisting of a well-posed and
   impedance passive system $\Sig_p$ and a finite{-}dimensional
   internal model controller $\Sig_c$, connected in feedback.}
   \label{fig:2}
\end{figure}

\begin{lemma} \label{lemma_sensity}
Let $H$ be a Hilbert space and $T\in\Lscr(H)$. Then \vspace{-2mm}
$$ \Re\m T \m\geq\m \delta I, \ \ \ \delta>0 $$
if and only if there exists $Q\in\Lscr(H)$ such that
$$ T \m=\m 2\delta (I-Q)^{-1}, \ with \ \ \ ||Q || \m\leq\m 1 \m.$$
Furthermore, if the above conditions are satisfied and the number
$M>0$ is such that $M\geq\|T\|/(2\delta)$, then
\begin{equation} \label{Hodeida-LemAdd}
  \Re\m Q \m\leq\m \left(1-\frac{1}{2M^2} \right) I \m.
\end{equation}
\end{lemma}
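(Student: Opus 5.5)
The plan is a Cayley-type transform. The natural candidate is $Q = I - 2\delta T^{-1}$, and every claim reduces to expanding a quadratic form in the right variable. In the converse direction I read the statement as implicitly requiring $I-Q$ to be invertible, since $(I-Q)^{-1}$ appears in it.

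\emph{Forward direction.} Assume $\Re T\geq \delta I$. Then $\|Ty\|\,\|y\|\geq \Re\langle Ty,y\rangle \geq \delta\|y\|^2$, so $T$ is bounded below. The same estimate holds for $T^*$, because $\Re T^*=\Re T$. Hence $\Ker T^*=\{0\}$, so $T$ has dense range, and since the range is also closed, $T$ is invertible in $\Lscr(H)$.

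Set $Q=I-2\delta T^{-1}$, so that $I-Q=2\delta T^{-1}$ is invertible and $T=2\delta(I-Q)^{-1}$. To see that $\|Q\|\leq 1$, write an arbitrary $x\in H$ as $x=Ty$. Then $Qx=Ty-2\delta y$, and
$$ \|Qx\|^2 = \|Ty\|^2-4\delta\Re\langle Ty,y\rangle+4\delta^2\|y\|^2 \leq \|Ty\|^2=\|x\|^2 .$$

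\emph{Converse direction.} Let $T=2\delta(I-Q)^{-1}$ with $\|Q\|\leq1$. Every $x\in H$ can be written as $x=(I-Q)z$, and then $Tx=2\delta z$. This gives
$$ \Re\langle Tx,x\rangle = 2\delta\left(\|z\|^2-\Re\langle Qz,z\rangle\right). $$
On the other hand,
$$ \delta\|x\|^2=\delta\left(\|z\|^2-2\Re\langle Qz,z\rangle+\|Qz\|^2\right) \leq \delta\left(2\|z\|^2-2\Re\langle Qz,z\rangle\right), $$
using $\|Qz\|\leq\|z\|$. The right-hand side equals $\Re\langle Tx,x\rangle$, so $\Re T\geq\delta I$.

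\emph{The estimate \rfb{Hodeida-LemAdd}.} This is the only step needing a choice of variable: I will parametrize by $x$ and set $z=(I-Q)^{-1}x=Tx/(2\delta)$. Since $(I-Q)^{-1}$ is invertible, $z$ ranges over all of $H$ as $x$ does. From the identity above,
$$ \Re\langle Qz,z\rangle=\|z\|^2-\frac{\Re\langle Tx,x\rangle}{2\delta}, $$
so \rfb{Hodeida-LemAdd} is equivalent to
$$ \frac{\Re\langle Tx,x\rangle}{2\delta}\geq\frac{\|Tx\|^2}{8\delta^2M^2}. $$
The left side is at least $\|x\|^2/2$ by $\Re T\geq\delta I$. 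The right side is at most $\|x\|^2/2$, because $\|Tx\|\leq\|T\|\,\|x\|\leq 2\delta M\|x\|$. This closes the argument.

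I expect no serious obstacle. The only points needing care are the invertibility of $T$ in the forward direction and making sure $z$ exhausts $H$ in the final estimate.
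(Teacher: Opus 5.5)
Your proof is correct and complete. The forward direction is sound: $\Re T\geq\delta I$, applied to both $T$ and $T^*$, makes $T$ invertible, and then $\|Qx\|^2=\|Ty\|^2-4\delta\Re\langle Ty,y\rangle+4\delta^2\|y\|^2\leq\|x\|^2$. In the converse you are right to treat invertibility of $I-Q$ as part of the hypothesis, since $\|Q\|\leq 1$ alone does not give it (take $Q=I$). Your reduction of \eqref{Hodeida-LemAdd} to $\Re\langle Tx,x\rangle/(2\delta)\geq\|Tx\|^2/(8\delta^2M^2)$, with $z=Tx/(2\delta)$ ranging over all of $H$, also checks out.

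The paper gives no argument of its own here. It calls the lemma an easy consequence of Lemma 3.3 in \cite{art59} and omits the proof. The difference in route is therefore only that you give a self-contained verification via the Cayley-type transform $Q=I-2\delta T^{-1}$. This choice is forced, since $T=2\delta(I-Q)^{-1}$ determines $Q$ uniquely. Your proof covers the quantitative bound that the paper actually uses in Step 2 of the proof of Theorem \ref{theorem:well}, with $M=\|\gg(s)\|/(2\delta)$. The citation is shorter but leaves the reader to reconstruct that bound from the reference.

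Your computation also shows the constant is sharp. For $H=\cline$ and $T=\delta+i\beta$ one gets $\Re Q=(\beta^2-\delta^2)/(\beta^2+\delta^2)=1-2\delta^2/|T|^2$, which is equality in \eqref{Hodeida-LemAdd} for $M=|T|/(2\delta)$.
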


This is an easy consequence of Lemma 3.3 in \cite{art59} and
its proof is omitted.

\begin{definition}
Let $\tline$ be a strongly continuous semigroup on $X$, with generator
$A$, and let $C$ be an admissible observation operator for $\tline$. A
point $p\in\cline$ is called an {\em observable eigenvalue} of the
pair $(A,C)$ if
\begin{enumerate}
\item $p$ is an eigenvalue of $A$, with eigenspace $E_p\subset X$.
\item For any nonzero $x_0\in E_p$, the function $t\rarrow C\tline_t
  x_0$ is not identically zero.
\end{enumerate}
The second condition is equivalent to $Cx_0\neq 0$, because $C\tline_t
x_0=e^{pt}C x_0$ for all $t\geq 0$.
\end{definition}

In the following two theorems, for the sake of simplicity, we ignore
the input $w$ from Fig.~\ref{fig:blocks} (we set it to zero), so that
we obtain the feedback system shown in Fig.~\ref{fig:2}. These
theorems are a generalization of \cite[Theorem 1.2]{art59}, where it
was assumed that $\Sig_p$ is exponentially stable. A more general
version (but less general than the theorems below) appears in the
conference paper \cite{art136}, but there the key part of the proof
is not given.

\begin{theorem} \label{theorem:well}
Let $\Sig_p$ be a well-posed impedance passive system with input
space and output space $U=\cline^p$, state space $X$, semigroup
generator $A$, control operator $B$, observation operator $C$ and
transfer function $\PPP_u$. Let $\Sig_c$ be an impedance passive and
minimal realization of
\begin{equation} \label{tf_controller}
   \gg(s) \m=\m \sum_{j=1}^n \dfrac{c_j^2}{s - i\o_j} + d \m,
\end{equation}
with state space $X_c$, system operator $a\in\Lscr(X_c)$, where $c_j
\in\Lscr(U),\m c_j>0,\m d\in\Lscr(U)$ and $\Re\m d>0$.

Under Assumption \ref{ass_trans}, the feedback system \m $\Sig_K$ in
Fig.~\ref{fig:2}, with input $\sbm{v\\ r}$ and output $\sbm{y \\
u^{out}}$, with state space $X\times X_c$, is well-posed, input-output
stable (i.e., its transfer function $\GGG^K$ is in $H^\infty_0$) and
impedance passive.

The sensitivity $S=(I+\PPP_u\gg)^{-1}$ is in $H^\infty_0(\Lscr(U))$.
\end{theorem}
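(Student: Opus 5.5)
The plan is to treat $\Sig_K$ as the standard feedback interconnection $u=r-\hbox{(output of }\Sig_c)$, $\Sig_c$ driven by $y+v$ (signs as in Fig.~\ref{fig:2}). The closed loop is well-posed if and only if $I+\PPP_u\gg$ has a uniformly bounded inverse on some half-plane $\cline_\beta$, i.e.\ $\gg$ (equivalently the feedthrough $d$) is an admissible feedback operator. Impedance passivity of the closed loop then follows at once from \rfb{Energix} applied to both subsystems: the cross terms $2\Re\langle u,y\rangle$ and $2\Re\langle y,\hbox{output of }\Sig_c\rangle$ cancel under the interconnection, leaving $\frac{\dd}{\dd t}(\|x\|^2+\|x_c\|^2)\leq 2\Re\langle [v;r],[y;u^{out}]\rangle$. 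By the same argument as for \rfb{Hangzhou}, this also gives $\Re\GGG^K(s)\geq 0$ on $\cline_0$, once analyticity there is known.

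\textbf{Well-posedness.} Since $\gg(s)\to d$ as $\Re s\to\infty$ and $\Re d>0$, Lemma~\ref{lemma_sensity} gives, for $\Re s$ large, $\Re \gg(s)\geq\delta I$ with $\|\gg(s)\|$ bounded. Hence $\gg(s)=2\delta(I-Q(s))^{-1}$ with $\Re Q(s)\leq(1-\frac{1}{2M^2})I$. Next I write
\[
I+\PPP_u\gg=\bigl(\gg^{-1}+\PPP_u\bigr)\gg .
\]
Because $\Re\gg^{-1}\geq \delta' I$ (inverse of an accretive operator with bounded norm) and $\Re\PPP_u\geq 0$, we get $\Re(\gg^{-1}+\PPP_u)\geq\delta' I$, which yields the bound $\|(\gg^{-1}+\PPP_u)^{-1}\|\leq 1/\delta'$. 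Therefore $\|(I+\PPP_u\gg)^{-1}\|\leq\|\gg^{-1}\|/\delta'$ is bounded on a right half-plane, and the standard feedback theory for well-posed systems (\cite{art10,art12}) gives well-posedness.

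\textbf{Input-output stability and $S\in H^\infty_0$.} This is the main step. Every entry of $\GGG^K$ is a combination of $\PPP_u,\gg,S$ of the form $\PPP_u S$, $\gg S$, $S$, $S\PPP_u\gg$, etc.; for example $\gg S=(\gg^{-1}+\PPP_u)^{-1}$ and $S=\gg^{-1}(\gg^{-1}+\PPP_u)^{-1}$. On $\cline_0$ we have $\Re\gg^{-1}\geq0$, since $\gg$ is positive real with poles only at $i\o_j$. So $F=\gg^{-1}+\PPP_u$ is positive there, and positivity plus invertibility give $\Re F^{-1}\geq0$, i.e.\ $F^{-1}$ is positive. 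Moreover $F$ is invertible on $\cline_0$, because $\Re\gg^{-1}(s)>0$ strictly for $\Re s>0$: I will check $\Re\gg(s)\geq\Re d>0$ directly from \rfb{tf_controller}. The difficulty is a uniform bound near the imaginary axis. I intend to use the closed-loop passivity: $\GGG^K$ is positive on $\cline_0$. A Cayley-transform argument, via Lemma~\ref{lemma_sensity} applied to $I+\GGG^K$, then makes $(I-\GGG^K)(I+\GGG^K)^{-1}$ a contraction. To get boundedness of $\GGG^K$ itself, I will instead show directly that $F^{-1}$ is bounded.

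Away from the points $i\o_j$, $\gg^{-1}$ is bounded with $\Re\gg^{-1}\geq c>0$ uniformly on compact pieces, and for large $|s|$ also $\Re\gg^{-1}\to\Re d^{-1}>0$. This gives $\|F^{-1}\|\leq 1/c$. Near $i\o_j$ we have $\gg^{-1}(s)\approx (s-i\o_j)c_j^{-2}\to0$, so I will rewrite $F^{-1}=\gg(I+\PPP_u\gg)^{-1}$ there. Assumption~\ref{ass_trans} gives $\PPP_u$ analytic at $i\o_j$ (as $i\o_j\in\rho(A)$) and $\PPP_u(i\o_j)$ invertible. Hence
\[
(I+\PPP_u\gg)^{-1}=\bigl[(s-i\o_j)\PPP_u^{-1}+ (s-i\o_j)\gg\bigr]^{-1}(s-i\o_j)\PPP_u^{-1},
\]
and $(s-i\o_j)\gg(s)\to c_j^2>0$ invertible, so $S$ and $\gg S$ extend boundedly (in fact analytically) near $i\o_j$. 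Combining the two regions gives bounded $S$, $\gg S$ and $S\PPP_u\gg=I-S$. The remaining entries involving $\PPP_u$ alone (e.g.\ $\PPP_u S$) are handled through $\PPP_u S=\PPP_u\gg^{-1}\cdot \gg S$ near $i\o_j$, and through $\PPP_uS=F^{-1}\gg^{-1}$-type identities elsewhere. The hardest part I expect is the uniform bound of $\|F(s)^{-1}\|$ for $\Re s\to0^+$ at frequencies where $\PPP_u$ itself may blow up, since $A$ may have imaginary spectrum. Positivity, $\Re F\geq\Re\gg^{-1}\geq c$, handles this uniformly, and that is exactly why the argument is organized around $F$.
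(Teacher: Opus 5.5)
Your proposal is correct and is essentially the paper's argument. Your $F=\gg^{-1}+\PPP_u$ is exactly the paper's $\widetilde S\gg^{-1}=(2\delta)^{-1}[I-Q+2\delta\PPP_u]$, and your bound $\Re F(s)\geq\delta/\|\gg(s)\|^2$ is the estimate behind \rfb{Mao_statue}. Your local factorization at $i\o_j$ is \rfb{lim:G} in another form; checking well-posedness directly on a far half-plane and bypassing Lemma~\ref{lemma_sensity} are only cosmetic differences.

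One slip needs fixing. The entries of $\GGG^K$ that involve $\PPP_u$ are $S\PPP_u$ and $\gg S\PPP_u$, not $\PPP_u S$. Also $F^{-1}\gg^{-1}=(I+\gg\PPP_u)^{-1}$, so your identity $\PPP_u S=F^{-1}\gg^{-1}$ is false as written. The identities you actually need are $\gg S\PPP_u=F^{-1}\PPP_u=I-F^{-1}\gg^{-1}$ and $S\PPP_u=(I-S)\gg^{-1}$. As in the paper, these give boundedness on all of $\cline_0$ from the bounds on $S$, $F^{-1}$ and $\gg^{-1}$, with no further splitting.
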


\vspace{-2mm}
\begin{pf} {\bf Step 1.} {One possible minimal realization of
$\gg$ from \rfb{tf_controller} is given by the feedthrough operator
$d$ satisfying $\Re d>0$, along with the matrices:}
\begin{align} \label{minreal}
\begin{split} & a \m=\m \mathrm{diag}\{ i\o_1 I,\; i\o_2I,\ \ldots
   i\o_nI\}, \\ & b=[c_1\ c_2\ \ldots c_n]^\top, \ \ \ c \m=\m
   [c_1\ c_2\ \ldots c_n]. \end{split}
\end{align}
This realization is impedance passive (easy to verify), and {
every minimal realization of $\gg$ is isomorphic to it via a boundedly
invertible transformation. It is a fundamental property that such an
isomorphism preserves the well-posedness, input-output stability,
generator spectral properties, and strong stability of the closed-loop
system. Thus, it suffices to proceed with this realization.}

The closed-loop system $\Sig_K$ in Fig.~\ref{fig:2} can be considered
as being obtained from the ``open-loop" system $\Sig_o$ consisting of
$\Sig_p$ and $\Sig_c$ acting separately with input $\sbm{u \\ e}$ and
output $\sbm{y \\ u^{out}}$, with state space $X\times X_c$, by
applying the static output feedback operator $K=\sbm{ 0 & I \\ -I &
0}$\m, in the spirit of \cite{art12}, \cite{art34}. The generating
operators of \m $\Sig_o$ are:
\begin{align} \label{Oshrit}
   A^o \m=\m \bbm{A & 0 \\ 0 & a}, \ \ B^o \m=\m \bbm{ B & 0 \\
   0 & b}, \ \ C^o \m=\m \bbm{C & 0 \\ 0 & c},
\end{align}
where $A^o$ is the semigroup generator, $B^o$ is the control operator
and $C^o$ is the observation operator. It is easy to see that $\Sig_o$
is impedance passive and its transfer function from the input
$\sbm{u\\ e}$ to the output $\sbm{y\\ u^{out}}$ is \vspace{-1mm}
$$ \GGG^o \m=\m \bbm{\PPP_u & 0 \\ 0 & \gg}.$$
The transfer function of $\Sig_K$ from the input $\sbm{v\\ r}$ to the
output $\sbm{y\\ u^{out}}$ is \m $\GGG^K=\GGG^o(I-K\GGG^o)^{-1}$, \m
whence
\begin{align} \label{tf:input_to_output}
   \m\n \GGG^K = \bbm{(I+\PPP_u \gg)^{-1}\PPP_u &
   (I+\PPP_u \gg)^{-1}\PPP_u \gg\\ -\gg(I+\PPP_u \gg)^{-1}
   \PPP_u  & \gg(I+\PPP_u \gg)^{-1}} .
\end{align}
We denote by $\Psi_\tau$ the output maps of $\Sig_p$, defined (for any
$\tau\geq 0$) as in \rfb{Jeremy_Bowen}, and we denote by $\Psi^o_\tau$
and by $\Psi^K_\tau$ the corresponding operators for $\Sig_o$ and for
$\Sig_K$. It is well known (see \cite[formula (6.13)]{art12}) that we
have
\begin{equation} \label{JingJingShebek}
   \Psi^o_\tau \m=\m (I-\fline^o_\tau K)\Psi^K_\tau \m,
\end{equation}
where $\fline^o_\tau$ is the input-output operator of $\Sig_o$ on the
interval $[0,\tau]$ (see, for instance, \cite{art10}).

{\bf Step 2.} \m Here we show that $S$ is well defined and analytic on
$\cline_0$ and we derive an estimate for $\|S(s)\|$. It is easy to see
from $c_j>0$ that for every index $j\in\Jscr$,
\begin{equation} \label{ineq:positive}
   \m\ \ \ \ \ \Re\m\left(\frac{c_j^2}{s-i\o_j}\right) \m>\m 0
   \FORALL s\in\cline_0 \m.
\end{equation}
Therefore, from \rfb{tf_controller} we see that
\begin{equation} \label{ineq:g_positive}
   \Re\m \gg(s) \m>\m \Re\m d \FORALL s\in\cline_0 \m.
\end{equation}
Let $\delta\in(0,\infty)$ be a lower bound for the positive matrix
$\Re d$, i.e., $\Re d\geq\delta I$. Then from \rfb{ineq:g_positive}
we have
\begin{equation} \label{Boulder}
   \Re\m \gg(s) \m>\m \delta I \FORALL s\in\cline_0 \m.
\end{equation}
This implies also that for all $s\in\cline_0$, \m $\|\gg(s) v\|>
\delta\|v\|$, for all $v\in U$, and hence
\begin{equation} \label{ineq:g_norm}
   \|\gg(s)\|\geq \delta, \quad \|\gg^{-1}(s)\| \m\leq\m
   \frac{1}{\delta} \FORALL s\in\cline_0 \m.
\end{equation}

By Lemma \ref{lemma_sensity}, the inequality \rfb{Boulder} implies
that for each $s\in\cline_0$ there exists $Q(s)\in\Lscr(U)$ such that
$$ \gg(s) \m=\m 2\delta({I}-Q(s))^{-1}, \ \ \ ||Q(s)||\leq 1.$$
Let $\widetilde S=I+\PPP_u \gg$, which is defined and analytic on
all of $\cline_0$. Since the values of this function are square
matrices, to show that $\widetilde S(s)$ is invertible, it is
enough to show that it is bounded from below. We have
\begin{align} \label{sensitivity}
  \widetilde S(s) \m&=\m I+2\delta\PPP_u(I-Q(s))^{-1} \notag \\ &=\m
  [I-Q(s) + 2\delta\PPP_u(s)] (I-Q(s))^{-1} \m \notag \\ &=\m
  (2\delta)^{-1}[I-Q(s)+2\delta\PPP_u(s)] \gg(s) \m.
\end{align}
According to \rfb{Hodeida-LemAdd} we have
$$ \Re Q(s) \m\leq\m \left( 1-\frac{2\delta^2}{\|\gg(s)\|^2} \right)
   I \FORALL s\in\cline_0 \m.$$
If we substitute this estimate and \rfb{Hangzhou} into
\rfb{sensitivity}, we get that for all $s\in\cline_0$,
\begin{align*}
   \Re (\widetilde{S}(s)\gg^{-1}(s)) &= (2\delta)^{-1} \Re[I - Q(s) +
   2\delta \PPP_u(s)] \\ & \geq (2\delta)^{-1} \frac{2\delta^2}
   {\|\gg(s)\|^2}I = \epsilon(s) I,
\end{align*}
where $\e(s)=\frac{\delta}{\|\gg(s)\|^2}>0$. It follows that
$$ \|\widetilde S(s) \gg^{-1}(s) v\| \m\geq\m \e(s) \|v\| \ \ \
   \forall\ v\in U=\cline^p,\ \ s\in\cline_0 \m,$$
   or equivalently (using \rfb{ineq:g_norm}),
$$ \|\widetilde S(s) z\| \m\geq\m \e(s) \|\gg(s) z\| \m\geq\m \e(s)
   \delta \|z\| \ \ \ \forall\ z\in U,\ \ s\in\cline_0 \m.$$
This lower bound shows that $\widetilde S(s)$ is invertible for all
$s\in\cline_0$. Since $\tilde{S}$ is analytic, we get that
$S=(\tilde S)^{-1}$ is well defined and analytic on $\cline_0$ and
\begin{equation} \label{Mao_statue}
  \| S(s) \| \m\leq\m \frac{1}{\delta\e(s)} \m=\m
  \frac{\|\gg(s)\|^2}{\delta^2} \FORALL s\in\cline_0 \m.
\end{equation}

{\bf Step 3.} Next we show that $\GGG^K\in H^\infty_0$, i.e., all the
transfer functions in \rfb{tf:input_to_output} are analytic and
bounded on $\cline_0$. These are $\gg S$, $S\PPP_u$, $S\PPP_u\gg$ and
$-\gg S\PPP_u$. First we show that \m $S\in H^\infty_0$. From
\rfb{tf_controller} we can derive that
$$ \lim_{\cline_0\ni s\rarrow i\o_j } (s-i\o_j)[I+\PPP_u(s)\gg(s)]
   \m=\m \PPP_u(i\o_j)c_j^2 \m.$$
According to Assumption \ref{ass_trans} and from $c_j>0$, we know that
the operator on the right-hand side is invertible. The operator on
the left is invertible as well (the inverse is a scalar multiple of
$S(s)$). Therefore, we can write the earlier limit for the inverses:
\vspace{1mm}
\begin{equation} \label{lim:G}
   \lim_{\cline_0\ni s\rarrow i\o_j } \frac{1}{s-i\o_j}S(s) \m=\m
   [\PPP_u(i\o_j)c_j^2]^{-1} \m.
\end{equation}
In particular, it follows that for all $j\in\Jscr$,
\begin{equation} \label{Maddalen_to_Bordeaux}
   \lim_{\cline_0\ni s\rarrow i\o_j } S(s) \m=\m 0 \m.
\end{equation}
This fact implies that there exists $\sigma>0$ such that, denoting
\vspace{-2mm}
$$ \Nscr \m=\m \bigcup_{j\in\Jscr}\Big\{ s\in\cline_0\ |\ |s-i\o_j| <
   \sigma \Big\} \m,$$
we have \m $\|S(s)\|\leq 1$ for all $s\in\Nscr$. From
\rfb{tf_controller} we see that $\gg$ is uniformly bounded on
$\cline_0\setminus\Nscr$, so according to \rfb{Mao_statue} $S$ is
uniformly bounded on $\cline_0\setminus\Nscr$. With the previous
estimate on $\Nscr$, this implies that $S\in H^\infty_0$.

Now we turn to $\gg S$, the transfer function from $r$ to $u^{out}$ in
Fig.~\ref{fig:2}. Using \rfb{tf_controller} and \rfb{lim:G}, we see
that $\gg S$ has a finite limit at $i\o_j$, for each $j\in\Jscr$, so
that $\gg S$ is uniformly bounded on $\Nscr$.  Since $\gg$ is
uniformly bounded on $\cline_0\setminus\Nscr$ and $S\in H^\infty_0$,
we conclude that $\gg S\in H^\infty_0$.

Now we consider $S\PPP_u$, the transfer function from $v$ to $y$. We
have already seen from \rfb{ineq:g_norm} that $\gg^{-1}$ is uniformly
bounded on $\cline_0$. Since \m $S\PPP_u=S\PPP_u\gg\gg^{-1}$ and
$$ S\PPP_u\gg \m=\m (I+\PPP_u\gg)^{-1}\PPP_u \gg \m=\m I-S
   \m\in\m H^\infty_0 \m,$$
we obtain that \m $S\PPP_u\in H^\infty_0$.

Next we handle the transfer function $S\PPP_u\gg$, the easiest case:
Its uniform boundedness follows immediately from the identity
$(I+\PPP_u\gg)^{-1}\PPP_u\gg=I-S$.

Finally, we consider $-\gg S\PPP_u$, the transfer function from $v$ to
$u^{out}$ in Fig.~\ref{fig:2}. We have proved earlier that
\\ $(I+\gg\PPP_u)^{-1}\gg=\gg(I+\PPP_u\gg)^{-1}\in H^\infty_0$.
Multiplying from the right with $\gg^{-1}$ (which is in $H^\infty_0$,
see \rfb{ineq:g_norm}) we obtain that \m $(I+\gg\PPP_u)^{-1}\in
H^\infty_0$. Using the algebraic identity
$$(I+\gg\PPP_u)^{-1} \m=\m I-\gg(I+\PPP_u\gg)^{-1}\PPP_u \m,$$
we obtain that indeed \m $-\gg S\PPP_u\in H^\infty_0$. Thus, we have
shown that indeed $\GGG^K\in H_0^\infty$.

{\bf Step 4.} We prove the well-posedness and passivity of $\Sig_K$.
Since this feedback system is obtained from the well-posed system
$\Sig_o$ via the static feedback operator $K$ and the closed-loop
transfer function $\GGG^K$ is uniformly bounded on $\cline_0$,
according to the feedback theory in {\cite[Section 3]{art12}},
$\Sig_K$ is well-posed. Moreover, the fact that $\GGG^K\in H^\infty_0$
means (by definition) that $\Sig_K$ is input-output stable.

Since $\gg$ is positive, {its impedance passive minimal realization
satisfies (see \rfb{Energix})}
$$ \frac{\dd}{\dd t} \m \| q(t)\|^2 \m\leq\m 2\Re\m \langle
   e(t),u^{{out}}(t) \rangle \m,$$
where $q$ denotes the state of $\Sig_c$. Similarly, since $\Sig_p$
is impedance passive, it satisfies
$$ \frac{\dd}{\dd t} \m \| x(t)\|^2 \m\leq\m 2\Re\m \langle u(t),
   y(t) \rangle \m,$$
where $x$ is the state of $\Sig_p$. Actually, this inequality holds in
the integral sense, as in \rfb{Tesla}. Denoting $u=v+u^{out}$ and
$e=r-y$ and using the above two inequalities, by a straightforward
computation, we obtain that
$$ \frac{\dd}{\dd t} \m (\|x(t)\|^2 + \| q(t)\|^2) \leq
   2\Re\m \left\langle \sbm{ v(t)\\ r(t)},\sbm{ y(t)\\ u^{out}(t)}
   \right\rangle .$$
This estimate holds in the integral sense, as in \rfb{Tesla}. Thus,
the feedback system $\Sig_K$ is impedance passive. \hfill $\square$
\end{pf}

\begin{theorem} \label{theorem:stab}
Under the conditions of Theorem \ref{theorem:well}, assume that \m
{\rm Ker}\m $B=\{0\}$ and $\sigma_p(A)\bigcap i\rline$ consists only
of observable eigenvalues. Then the closed-loop semigroup generator
$A^K$ has no imaginary eigenvalues and
\begin{equation} \label{Maddalen}
   \left[ \sigma(A^K)\cap i\rline\right] \m\subset\m \left[ \sigma(A)
   \cap i\rline \right]
\end{equation}
in particular, $i\o_j\in\rho(A^K)$ for all $j\in\Jscr$. Moreover, $S$
is analytic in a neighborhood of each of the points $i\o_j$ and
$S(i\o_j)=0$ for all $j\in\Jscr$.

If, in addition, $\sigma(A)\bigcap i\rline$ is countable, then the
closed-loop semigroup $\tline^K$ is strongly stable.
\end{theorem}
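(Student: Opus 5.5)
We will work with the concrete realization \rfb{minreal} of $\Sig_c$, as justified in Step 1 of the proof of Theorem \ref{theorem:well}. The closed-loop generator is then $A^K\sbm{x\\ q}=\sbm{A x + B c q\\ a q - b C x}$ (up to sign conventions coming from $K$), with domain defined through the well-posed feedback theory of \cite{art12}. The strategy is first to exclude imaginary eigenvalues of $A^K$ by an energy argument, then to localize the imaginary spectrum via the transfer function, and finally to invoke the Arendt--Batty--Lyubich--V\~u theorem.

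\textbf{Imaginary eigenvalues.} Let $A^K z_0=i\o z_0$ with $z_0=(x_0,q_0)$ and $v=r=0$. Then $z(t)=e^{i\o t}z_0$ has constant norm, while the passivity inequality obtained in Step 4 of Theorem \ref{theorem:well} is actually sharper. Indeed, the controller with $\Re d\ge\delta I$ dissipates at least $2\delta\|e\|^2$, because its energy balance reads $\frac{\dd}{\dd t}\|q\|^2=2\Re\langle e,u^{out}\rangle-2\Re\langle de,e\rangle$ for the lossless part $(a,b,c)$. Hence $0=\frac{\dd}{\dd t}\|z\|^2\le-2\delta\|e\|^2$, so $e=-y\equiv0$, that is $Cx_0=0$ (in the appropriate Lebesgue-extension sense). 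It follows that $q$ evolves uncoupled: $aq_0=i\o q_0$. Moreover $u^{out}=cq$ and $\dot x=Ax+Bu^{out}$ with zero output. If $\o=\o_j$, then since $C x_0=0$ the plant equation gives $(i\o_j-A)x_0=B c q_0$. Assumption \ref{ass_trans} yields $x_0=(i\o_j-A)^{-1}Bcq_0$, so $0=Cx_0=\PPP_u(i\o_j)cq_0$ and therefore $cq_0=0$. For the diagonal realization, $q_0$ lies in the $i\o_j$ block, $c_j$ is invertible, and so $q_0=0$ and then $x_0=0$. If $\o\neq\o_j$ for all $j$, then $q_0=0$, hence $u^{out}=0$, so $Ax_0=i\o x_0$ with $Cx_0=0$; by observability of imaginary eigenvalues, $x_0=0$. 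The main technical care is making rigorous that $e\equiv 0$ forces $Cx_0=0$ and the plant relation; this uses $z_0\in\Dscr(A^K)$, which makes $y$ continuous, together with the standard description of $\Dscr(A^K)$. I expect that the hypothesis $\Ker B=\{0\}$ will be needed at the step where $Bcq_0$ relations are converted into $cq_0$ relations when $x_0$ is not yet known to be in $\Dscr(A)$.

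\textbf{Spectral inclusion \rfb{Maddalen}.} Take $i\o\in\rho(A)$. If $\o\notin\Om$, then $i\o I-A^o$ is invertible, and the closed-loop resolvent can be written via the feedback formula $(sI-A^K)^{-1}=(sI-A^o)^{-1}+(\text{terms involving }(I-K\GGG^o(s))^{-1})$. This holds whenever $I-K\GGG^o(i\o)$ is invertible, which is equivalent to invertibility of $I+\PPP_u\gg$ at $i\o$. That invertibility follows from the same estimate as in Step 2, extended to the imaginary axis: $\Re\gg(i\o)\ge\delta I$ and $\Re\PPP_u(i\o)\ge0$ by continuity from $\cline_0$. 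At $\o=\o_j$, instead multiply by $(s-i\o_j)$ as in \rfb{lim:G}. The function $(s-i\o_j)(I+\PPP_u\gg)$ is analytic near $i\o_j$ with invertible value $\PPP_u(i\o_j)c_j^2$, so $S(s)=(s-i\o_j)[(s-i\o_j)(I+\PPP_u\gg)]^{-1}$ is analytic near $i\o_j$ and vanishes there. The closed-loop resolvent, expressed through $S$, $\gg S$, $S\PPP_u$ and the open-loop resolvents, has a removable singularity at $i\o_j$: the pole of $(s-a)^{-1}$ cancels against the zero of $S$. Hence $i\o_j\in\rho(A^K)$. Verifying this cancellation in the resolvent formula is the main obstacle. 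It is a finite-dimensional computation on the $X_c$ block, using $\gg S=\gg(I+\PPP_u\gg)^{-1}$ bounded near $i\o_j$.

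\textbf{Strong stability.} By Theorem \ref{theorem:well}, $\Sig_K$ is impedance passive, so $\tline^K$ is a contraction semigroup. By the two previous steps, $\sigma(A^K)\cap i\rline$ is countable and $\sigma_p(A^K)\cap i\rline=\emptyset$. The Arendt--Batty--Lyubich--V\~u theorem therefore gives that $\tline^K$ is strongly stable.
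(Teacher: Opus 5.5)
Your argument is correct in outline and takes a genuinely different route from the paper in the first two steps.

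\textbf{Imaginary eigenvalues.} The paper splits into three cases. For $i\o\in\rho(A)\setminus i\Om$ it uses \cite[Theorem 1.1]{art45} together with input-output stability, and that is where $\Ker B=\{0\}$ is needed. For $i\o\in\sigma(A)$ and $\o=\o_j$ it uses the static m-dissipativity of the plant's system node plus the $\Re d$ margin of the controller. Your estimate $\|z(\tau)\|^2-\|z(0)\|^2\leq-2\int_0^\tau\langle(\Re d)e,e\rangle\dd t$ is the time-domain form of the same dissipation mechanism, and it handles all cases at once. Your argument never uses $\Ker B=\{0\}$: from $(i\o_jI-A)x_0=Bcq_0$ in $X_{-1}$ you go to $[C\&D]\sbm{x_0\\ cq_0}=\PPP_u(i\o_j)cq_0$ without inverting $B$. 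So the worry at the end of your first step is unfounded.

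\textbf{Spectral inclusion and $S(i\o_j)=0$.} For \rfb{Maddalen} the paper uses Fredholm eigenvalues and the invariance of the essential spectrum under compact feedback from \cite{art45}. You instead use the closed-loop resolvent formula. It is valid at every $s\in\rho(A^o)$ where $I-K\GGG^o(s)$ is invertible; this follows directly from the system-node description of $\Dscr(A^K)$, with no kernel condition. Off $i\Om$ your positivity argument is complete and does not even need the eigenvalue step. You also get $S(i\o_j)=0$ directly from \rfb{lim:G}, rather than through $i\o_j\in\rho(A^K)$ as the paper does. Your route is more elementary and self-contained; the paper's is shorter once \cite{art45} is accepted.

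\textbf{The step left open.} Removability at $i\o_j$ is true, but not for the reason you give. Write $R_p=(sI-A)^{-1}$, $R_c=(sI-a)^{-1}$ and $\widetilde S=(I+\gg\PPP_u)^{-1}$. The formula then reads
\[ (sI-A^K)^{-1}=\bbm{R_p-R_pB\,\gg S\,CR_p & R_pB\,\widetilde S\,cR_c\\ -R_cb\,S\,CR_p & R_c-R_cb\,S\PPP_u\,cR_c}. \]
Only the lower-left block is a case of ``the pole of $R_c$ against the zero of $S$''.

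The upper-right block needs $\widetilde S(i\o_j)=0$. This follows from $\widetilde S=I-\gg S\PPP_u$ and $\gg S\to\PPP_u(i\o_j)^{-1}$, so it uses the exact limit in \rfb{lim:G}, not merely the boundedness of $\gg S$.

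In the lower-right block, $R_cbS\PPP_ucR_c$ has two poles against one zero, so a simple pole remains. Its residue is $P_jb\,[\PPP_u(i\o_j)c_j^2]^{-1}\PPP_u(i\o_j)\,cP_j=P_j$, where $P_j$ is the projection onto the $j$-th block of $X_c$. This residue cancels the residue $P_j$ of the standalone $R_c$ term.

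A cheaper way to close the step: every factor is analytic on a punctured disc around $i\o_j$, and $R_c$ has only a simple pole. Hence the resolvent of $A^K$ has at most a pole at $i\o_j$. A pole of a resolvent is always an eigenvalue, and your first step excludes that.

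Separately, your displayed formula for $A^K$ omits the contribution of $d$ (since $u^{out}=cq-dy$), but you never use that formula.
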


{The assumption $\Ker B=\{0\}$ is not restrictive, as one can
always replace $U$ with $(\Ker\m B)^\perp=\Ran\m B^*$, without
affecting the system dynamics.}

Related strong stability and output regulation results are in the
important reference \cite{Lassi2019}. In \cite[Theorem 3.2]
{Lassi2019}, $\Sigma_p$ is assumed 
{to be a regular linear system
(a strict subclass of well-posed systems) and strongly stable, and its
transfer function is assumed to satisfy $\Re\PPP_u (i\o_k)>0$ while
allowing a controller without feedthrough ($d=0$). Furthermore, when
$\Re d>0$, \cite[Remark 3.1]{Lassi2019} aligns with our invertibility
condition but accommodates the case where $i\o_j\in \sigma(A)$, which
is excluded in our framework. On the other hand, condition (3) in
\cite[Theorem 3.2]{Lassi2019} imposes a complex condition on the
resolvent sets of an infinite family of closed-loop semigroups. In
contrast, our framework relaxes the assumptions on the (open-loop)
plant by not requiring it be regular or stable. Moreover, we utilize a
classical observability property instead, which is generally easier to
verify in specific DPS.} In the first part of \cite[Theorem
5.2]{Lassi2019} the assumptions are not explicitly listed, but a
careful reading shows that $\Sig_p$ is regular and impedance passive,
and there exists a feedback operator $D_{c2}>0$ such that the
corresponding closed-loop semigroup is strongly stable and has no
spectrum on the imaginary axis. The concept of error converging to
zero used in \cite{Lassi2019} is different, but thanks to Propositions
\ref{LaDefense} and \ref{Fordo}, the convergence statements are
equivalent. \vspace{-2mm}

\begin{pf} {\bf Step 1.} We show that $A^K$ has no eigenvalue on
$i\rline$. For this, assume that $i\o\in\sigma_p(A^K)$ for some $\o\in
\rline$, and we show that this leads to a contradiction. We use the
notation from \rfb{Oshrit}. First we need to clarify a technical
detail: From Ker $B=\{0\}$ and the structure of $b$ in \rfb{minreal}
we see that Ker $B^o=\{0\}$. This is needed to apply (in the next
paragraph) a result from \cite{art45}.

{\bf First case of $i\o\in\sigma_p(A^K)$.} We consider the case
$\o\neq\o_j$ for all $j\in\Jscr$ and $i\o\in\rho(A)$, so that
$i\o\in\rho(A^o)$. According to Theorem 1.1 in \cite{art45}, if $i\o
\in\sigma_p(A^K)$, then Ker $[I-K\GGG^0(i\o)]\neq\{0\}$, so that
$$ \det[I-K\GGG^0(i\o)] \m=\m 0,$$
hence $[I-K\GGG^0]^{-1}$ has a pole at $i\o$. This implies that
$[I-K\GGG^0]^{-1}$ is not in $H^\infty_0$. Since $[I-K\GGG^0]^{-1}$ is
the transfer function from $\sbm{v \\ r} $ to $\sbm{u \\ e}$ and, as
already proved, the feedback connection of $\PPP_u$ and $\gg$ is
input-output stable, this is a contradiction.

{\bf Second case of $i\o\in\sigma_p(A^K)$.} Now we consider the case
$i\o\in\sigma(A)$ (hence, by Assumption \ref{ass_trans}, $\o\not=\o_j$
for all $j\in\Jscr$). From the standard feedback theory of well-posed
systems (Proposition 6.6 of \cite{art12})
$$ A^K \m=\m A^o+B^o K C^K \ \ \ \text{on}\ \ \Dscr(A^K),$$
where $C^K$ the observation operator of $\Sig_K$. Thus, the
eigenvector of $A^K$ is a non-zero $\sbm{x_p\\ q_p}\in\Dscr(A^K)$ such
that
\begin{align} \label{eq_characteristic}
   [A^o+B^oK C^K] \sbm{x_p\\ q_p} \m=\m i\o
   \sbm{x_p\\ q_p}.
\end{align}
Denote \vspace{-2mm}
\begin{align} \label{denote:x_p_q_p}
  \sbm{y_p\\ u_p^{out}} \m=\m C^K\sbm{x_p\\ q_p},
\end{align}
then substitute \rfb{denote:x_p_q_p} into \rfb{eq_characteristic},
leading to
\begin{equation} \label{eq:eigenAB}
   \begin{array}{l} A x_p+B u^{out}_p \m=\m i\o\m x_p,\ \qquad
   a q_p - b y_p \m=\m i\o\m q_p. \end{array}
\end{equation}

Let $\sbm{\xx\\ \qq}$ be the state trajectory of $\Sig_K$
corresponding to the initial state $\sbm{x_p\\ q_p}$ and the input
$\sbm{v\\ r}=0$ and let $\sbm{\yy\\ \uu^{out}}$ be the corresponding
output function. It follows from \rfb{eq_characteristic} that
$\xx(t)=e^{i\o t}x_p$ and $\qq(t)= e^{i\o t}q_p$. It is clear from
Fig.~\ref{fig:2} that for all $t\in[0,\tau]$, \ $\yy(t)=\CD
\sbm{\xx(t)\\ \qq(t)}$ and $\uu^{out}(t) =c\qq(t)-d\yy(t)$, where
$\CD$ is the combined observation/feedthrough operator of \m
$\Sig_p$, see for instance \cite[Sect.~3]{art11}. Taking $t=0$, we
obtain
\begin{equation} \label{eq:upUyp}
   y_p \m=\m [C \& D]\sbm{x_p \\ u^{out}_p}, \ \ \
   u^{out}_p \m=\m c q_p-d y_p.
\end{equation}
Thus, from \rfb{eq:eigenAB} and \rfb{eq:upUyp},
\begin{align}
   \begin{bmatrix} \begin{matrix} A & B \end{matrix} \\[-2pt]
   \begin{matrix} -[C \& D] \end{matrix} \end{bmatrix}
   \left[\begin{array}{l} x_p \\ u^{out}_p \end{array} \right]
   \m=\m \left[ \begin{array}{c} i\o x_p \\ -y_p
   \end{array} \right] \m.
\end{align}
According to the well-known characterization of impedance passive
systems, see {\cite[Theorem 4.2]{staffans2002passive},}
$\sbm{A & B\\ -[C & \& D]}$ is m-dissipative. Hence
$$ \Re \m \left\langle\left[\begin{array}{l} x_p \\ u^{out}_p
   \end{array}\right],\left[\begin{array}{c} i\o x_p \\ -y_p
   \end{array}\right]\right\rangle \m\leq 0 \m,$$
which implies that
\begin{equation} \label{passivity:up and yp}
   \Re\m \langle u^{out}_p, y_p \rangle \m\geq\m 0 \m.
\end{equation}
Since $\Sig_c$ with the structure \rfb{minreal} is impedance passive
even if we set $d=0$, by the just mentioned result of Staffans we have
$\Re\sbm{a & b\\ -c & 0}\leq 0$, which implies
\begin{equation} \label{ineq:abcd d}
   \Re\m \sbm{ a & b \\[3pt] -c & -d } \m\leq\m \sbm{ 0 & 0 \\[3pt]
   0 & -\Re\m d} \m.
\end{equation}
From \rfb{eq:eigenAB} and \rfb{eq:upUyp}, we get
$$ \sbm{ a & b \\[3pt] -c & -d } \sbm{ q_p \\[3pt] -y_p } \m=\m
   \sbm{ i \o q_p \\[3pt] -u^{out}_p } \m.$$
Hence
$$ \begin{aligned} \Re <u^{out}_p, y_p> & = \Re \left[i \o\left\|q_p
   \right\|^2+<y_p, u^{out}_p>\right] \\ & = \Re \left\langle
   \sbm{ q_p \\[3pt] -y_p }, \sbm{ i\o q_p \\[3pt] -u^{out}_p }
   \right\rangle \\ & = \Re \left\langle \sbm{ q_p \\[3pt] -y_p },
   \sbm{ a & b \\[3pt] -c & -d } \sbm{ q_p \\[3pt] -y_p }
   \right\rangle . \end{aligned}$$
Using \rfb{ineq:abcd d}, we get
$$ \Re \langle u^{out}_p, y_p \rangle \m\leq\m -(\Re\m d)
   \|y_p\|^2 \m.$$
Comparing this with \rfb{passivity:up and yp} we get
$$ \Re \langle u^{out}_p, y_p \rangle \m=\m 0 \m.$$
Since $\Re d>0$ by assumption, we get $y_p=0$.  Using this fact
together with \rfb{eq:eigenAB} and \rfb{eq:upUyp}, we get
\begin{equation} \label{eq:charac_contr}
   a q_p \m=\m i \o q_p, \ \qquad c q_p \m=\m u^{out}_p,
\end{equation}
\begin{equation} \label{eq:charac_plant}
   (i\o I-A) x_p \m=\m B u^{out}_p, \ \quad [C \& D] \sbm{ x_p \\[3pt]
   u^{out}_p } \m=\m 0 \m.
\end{equation}

Since $i\o\notin\sigma(a)$, we obtain from \rfb{eq:charac_contr} that
$q_p=0$, hence $u_p^{out}=0$. Since $\sbm{x_p\\ q_p}\not=0$, we must
have $x_p\not=0$. Since $[\CD]\sbm{x_p\\ 0}=Cx_p$, we see from
\rfb{eq:charac_plant} that $x_p$ is an unobservable eigenvector of $A$
corresponding to the eigenvalue $i\o$. According to {the
assumption on $\sigma_p(A)\bigcap i\rline$} in Theorem
\ref{theorem:stab}, this is impossible.

{\bf Third case of $i\o\in\sigma_p(A^K)$} is when $\o=\o_j$ for some
$j\in\Jscr$. We have $i\o\notin\sigma(A)$ by assumption. Notice that
all the derivations we did in the second case are still valid, up to
and including \rfb{eq:charac_plant}. Thus, from \rfb{eq:charac_plant}
we get
$$ x_p \m=\m (i\o I-A)^{-1} B u^{out}_p \m.$$
Substitute this into the second equation of \rfb{eq:charac_plant}, to
get
$$ [C \& D] \sbm{(i\o I-A)^{-1} B \\ I} u^{out}_p \m=\m \PPP_u
   (i\o) u^{out}_p \m=\m 0 \m.$$
According to the assumption in Theorem \ref{theorem:stab}, we have
that $\PPP_u(i\o_j)$ is invertible, for all $j\in\Jscr$, and thus
$u^{out}_p=0$. Going back to \rfb{eq:charac_plant}, we get
$$ A x_p \m=\m i\o x_p \m.$$
Thus $x_p$ has to be zero, otherwise $i\o_j$ would be an eigenvalue
of $A$, which is a contradiction to our assumption.

Since $\Sig_c$ is minimal, from \rfb{eq:charac_contr} and by the fact
that $u_p^{out}=0$, it follows that $q_p=0$. Thus $\sbm{x_p\\ q_p}=0$,
a contradiction. This finishes the third case, and so we have shown
that $A^K$ has no imaginary eigenvalues.

{\bf Step 2.} \m Now we prove that \rfb{Maddalen} holds. For this, it
will be enough to show that any point $i\o\in i\rline\setminus\sigma
(A)$ is not in $\sigma\left(A^K\right)$. We assume the opposite, that
$i\o\in\sigma\left(A^K\right)$, and show that this leads to a
contradiction. We have to consider two cases:

{\bf Case 1:} $\o\not\in\{\o_j\ |\ j\in\Jscr\}$. Notice that
$\sigma(A^o)=\sigma(A)\cup\sigma(a)$, hence $i\o\not\in\sigma(A^o)$.
According to \cite[Theorem 2.6]{art45}, $i\o$ is a Fredholm eigenvalue
of $A^K$ {(see \cite[Definition 2.1]{art45})}. But we have proved
earlier that $A^K$ has no eigenvalues on $i\rline$, which is a
contradiction.

{\bf Case 2:} $\o=\o_j$ for some $j\in\Jscr$. Since $\Sig_K$ is
impedance passive (as proved in the previous theorem), its semigroup
$\tline^K$ is a contraction semigroup on $X\times\cline^n$, so that
the right half-plane $\cline_0$ is in $\rho(A^K)$. Thus, $i\o$ is in
the boundary of $\sigma(A^K)$. According to { \cite[Proposition
1.10, IV]{EngNag},} $i\o$ is in the approximate point spectrum of
$A^K$. Thus, according to the definition of this type of spectrum, it
is either an eigenvalue of $A^K$ (which we know that it cannot be true)
or the range Ran $(i\o I-A^K)$ is not closed. Hence, $i\o\in\sigma_e
(A^K)$, the essential spectrum of $A^K$, as defined for instance in
\cite[Sect.~2]{art45}. According to \cite[Proposition 2.5]{art45},
$\sigma_e(A^o)=\sigma_e(A^K)$ (the essential spectrum is invariant
under compact feedback). Thus, $i\o\in\sigma_e(A^o)$. According to
\cite[Proposition 2.3]{art45}, $i\o$ is not a Fredholm eigenvalue of
$A^o$. But on the other hand, from \rfb{Oshrit} we see that $i\o$,
being a Fredholm eigenvalue of $a$, is a Fredholm eigenvalue of $A^o$,
which is a contradiction. Thus we have proved \rfb{Maddalen}.

{\bf Step 3.} \m We show that $S(i\o_j)=0$. We have just proved (in
Case 2 above) that $i\o_j\in\rho(A^K)$. Since the transfer function of
any well-posed system has an analytic extension to the resolvent set
of its generator (see, e.g., \cite{art10}), the function $\GGG^K$ from
\rfb{tf:input_to_output} is analytic in a neighborhood of $i\o_j$. The
right upper block of $\GGG^K$ is $I-S$, so that also $S$ is analytic
around $i\o$. Now it follows from \rfb{Maddalen_to_Bordeaux} that
indeed $S(i\o_j)=0$.

{\bf Step 4.} \m It remains to prove the strong stability of
$\tline^K$. According to the well-known Arendt-Batty Theorem
\cite{AreBat} (see also \cite{PaZw2013}), if $A^K$ has no eigenvalues
on $i\rline$ (which we have already proved) and $\sigma\left(A^K
\right)\cap i\rline$ is countable, then $\tline^K$ is strongly stable.
The countable property follows immediately from \rfb{Maddalen} and the
assumption that $\sigma(A)\cap i\rline$ is countable. Therefore,
$\tline^K$ is strongly stable. \hfill $\square$ \end{pf}

Now we consider the output regulator problem for impedance passive
linear systems as in Fig.~\ref{fig:blocks}, with a disturbance input
$w$ and a control input $u$. {We deote by $B$ and $B_w$ the
control operators from the inputs $u$ and $w$, so that the state
trajectories of \m $\Sig_p$ are the solutions of
$$ \dot x(t) \m=\m A x(t) + B u(t) + B_w w(t) \m,$$
where $B$ and $B_w$ are admissible control operators for the semigroup
$\tline$ generated by $A$.}

\begin{theorem} \label{thm:main}
Suppose that $\Sig_p$ is a well-posed linear system with transfer
function $\PPP=[\PPP_w\ \PPP_u]$, $\Sig_p$ is impedance passive from
$u$ to $y$ and satisfies Assumption \ref{ass_trans}. Let the
controller $\Sig_c$ be an impedance passive and minimal realization of
the transfer function $\gg$ from \rfb{tf_controller}, with state space
$X_c$. Then the closed-loop system $\Sig_L$ in Fig. \ref{fig:blocks}
with input $\sbm{w\\ v\\ r}$ and output $\sbm{y\\ u^{out}}$, with
state space $X\times X_c$, is well-posed. This system is input-output
stable and impedance passive from $\sbm{v \\ r}$ to
$\sbm{y \\ u^{out}}$.

Assume additionally that $\Ker\m B=\{0\}$, $\sigma_p(A)\bigcap i
\rline$ consists only of observable eigenvalues, and $\sigma(A)\bigcap
i\rline$ is countable. Then the closed-loop system $\Sig_L$ is
strongly stable. If $w,\m v$ and $r$ are of the form \rfb{exosig},
then $\Sig_c$ solves the output regulator problem (as defined in
Section 1). {Moreover, the signal $t\mapsto\|e(t)\|^2$ (and hence
also $t\mapsto\|e(t)\|$) tends to zero when filtered.}
\end{theorem}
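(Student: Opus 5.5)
The plan is to reduce the main theorem to Theorems \ref{theorem:well} and \ref{theorem:stab}. For the additional input $w$ we use linearity and superposition, and for the error we use Theorem \ref{Macron}.

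First, $\Sig_L$ is obtained from the open-loop system with inputs $\sbm{w\\ u\\ e}$ by the same static feedback $K$ as in the proof of Theorem \ref{theorem:well}, with $w$ passing through unaffected. The transfer function from $w$ to $\sbm{y\\u^{out}}$ is
\[ \bbm{S\PPP_w \\ -\gg S\PPP_w}. \]
The closed-loop semigroup is that of $\Sig_K$, since $w$ enters only through the admissible operator $B_w$. Hence well-posedness follows from \cite[Section 3]{art12}, exactly as in Step 4 of the proof of Theorem \ref{theorem:well}, and the semigroup is $\tline^K$. Input-output stability and impedance passivity from $\sbm{v\\r}$ to $\sbm{y\\u^{out}}$ are then just Theorem \ref{theorem:well}. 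Under the extra hypotheses, Theorem \ref{theorem:stab} gives strong stability of $\tline^K$, so $\Sig_L$ is strongly stable.

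For regulation, take $w,v,r$ as in \rfb{exosig} and treat one frequency $\o_j$ at a time. Since $i\o_j\in\rho(A^K)$ by Theorem \ref{theorem:stab}, there is a unique state $z_j\in X\times X_c$ with the following property: the input $(w_j,v_j,r_j)e^{i\o_j t}$ started at $z_j e^{i\o_j t}$ gives a state trajectory $z_j e^{i\o_j t}$ and an output $\GGG^L(i\o_j)(w_j,v_j,r_j)e^{i\o_j t}$. The standard fact that for well-posed systems exponential inputs at points of $\rho(A^K)$ produce such steady states, with the analytically extended transfer function, gives this. Summing over $j$ yields a bounded steady-state trajectory $z_{ss}(t)$.

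For an arbitrary initial state $z_0$, the difference from the steady state is $\tline^K_t(z_0-z_{ss}(0))$. This is bounded since $\tline^K$ is a contraction semigroup, so requirement (ii) holds.

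The error is $e=y-r$, and its steady-state part at frequency $\o_j$ is
\[ \bigl[S\PPP_w(i\o_j)w_j + S\PPP_u(i\o_j)v_j - S(i\o_j)r_j\bigr]e^{i\o_j t}, \]
using $e=-S r+S\PPP_u v+S\PPP_w w$ from Fig.~\ref{fig:blocks}. I must check the signs against \rfb{tf:input_to_output}, where the entry for $r$ is $I-S$ on $y$. The key step is showing all three coefficients vanish. We have $S(i\o_j)=0$ by Theorem \ref{theorem:stab}. Because $\PPP_u$ and $\PPP_w$ are analytic near $i\o_j$ (as $i\o_j\in\rho(A)$), the products $S\PPP_u$ and $S\PPP_w$ also vanish at $i\o_j$. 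So the steady-state error is zero and $e$ is the output of $\Sig_K$ from the initial state $z_0-z_{ss}(0)$ with zero input. This output is obtained through the admissible observation operator of $\Sig_L$ (the $y$-component), so Theorem \ref{Macron} gives that $\|e(t)\|^2$, and hence $\|e(t)\|$, tends to zero when filtered. Filtering is linear and its componentwise output is bounded by the filtered $\|e\|$, so (iii) follows as well.

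The main obstacle is the rigorous justification of the exponential steady-state solution for a well-posed (not necessarily regular) system, including that the output is given by the extended transfer function at a point of $\rho(A^K)$ on the imaginary axis. I would get this by building the trajectory via $z_j=(i\o_j I-A^K)^{-1}B^L(w_j,v_j,r_j)$, with $B^L$ the control operator of $\Sig_L$, and invoking the resolvent representation of the transfer function from \cite{art10}.
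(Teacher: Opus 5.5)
Your proposal is correct and follows essentially the same route as the paper: well-posedness and the closed-loop semigroup $\tline^K$ inherited from the feedback structure of Theorem~\ref{theorem:well}; the steady-state trajectory built from $(i\omega_j I-A^K)^{-1}B^L$, whose error vanishes because $S(i\omega_j)=0$ (Theorem~\ref{theorem:stab}) and $\PPP_w,\PPP_u$ are analytic at $i\omega_j$; and Theorem~\ref{Macron} applied to the transient output of $\Sigma_K$. The remaining differences are cosmetic: you let $w$ pass through the feedback $K$ rather than introducing $L$, you get boundedness from contractivity, and you spell out the analyticity of $\PPP_w,\PPP_u$ and the componentwise bound that gives item (iii).
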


\vspace{-2mm}
\begin{pf} \m Similarly as in the proof of Theorem
\ref{theorem:well}, we can obtain the system $\Sig_L$ in Fig.
\ref{fig:blocks} from the ``open{-}loop system'' consisting of
$\Sig_p$ and $\Sig_c$ by the feedback operator $L=\sbm{0 & 0\\ I & 0
\\ 0 & -I}$ acting on the input $\sbm{w \\ u \\ e}$ from the output
$\sbm{y \\ u^{out}}$. The state space of $\Sig_L$ is $X\times X_c$.
The transfer function of the system $\Sig_L$ from the input
$\sbm{w \\ v \\ r}$ to the output $\sbm{y\\ u^{out}}$ is
\vspace{-2mm}
\begin{equation} \label{tf:input_to_output_1}
  \GGG^L \m=\m \bbm{ \begin{matrix} S\m\PPP_w \\ -\gg \m S\m \PPP_w
  \end{matrix} & \vline height 6mm depth 4mm & \m\ \GGG^K \ \m } ,
\end{equation}
where $\GGG^K$ is the transfer function from $\sbm{v \\ r}$ to
$\sbm{y \\ u^{out}}$ given by \rfb{tf:input_to_output}. We know from
Theorem \ref{theorem:well} that $\GGG^K,\m S\in H^\infty_0$. Since
$\PPP_w$ is a proper transfer function, i.e., $\PPP_w\in H^\infty_m$
for some $m\in\rline$, clearly $\GGG^L$ is proper and hence the
feedback system $\Sig_L$ is well-posed, see \cite{art10}. The
input-output stability and passivity properties follow easily from
Theorem \ref{theorem:well}. The first part is proved.

If $w=0$ then the system $\Sig_L$ reduces to the system $\Sig_K$ from
the previous two theorems, so that the semigroup of \m $\Sig_L$ is the
same as the semigroup $\tline_K$ of \m $\Sig_K$. Hence, under the
additional assumptions stated in the second part of the theorem, it
follows from Theorem \ref{theorem:stab} that $\Sig_L$ is strongly
stable.

It remains to show that with the inputs from \rfb{exosig}, any state
trajectory of \m $\Sig_L$ is bounded in $X\times X_c$ and $\|e(t)\|$
is convergent to zero when filtered. Thanks to the superposition
principle, it is enough to prove these statements for a single
frequency $\o_j$ (where $j\in\Jscr$), i.e., assuming that there is
only one term present in each of the sums \rfb{exosig}. Then every
state trajectory of $\Sig_L$ is a strong solution of
\begin{equation} \label{Adir}
   \dot z(t) \m=\m A^K z(t) + B^L\sbm{w(t)\\ v(t)\\ r(t)} ,
\end{equation}
where $B^L$ is the control operator of $\Sig_L$ and $w(t)=w_j
e^{i\o_j t}$, $v(t)=v_j e^{i\o_j t}$, $r(t)=r_j e^{i\o_j t}$. For the
concept of strong solution of an equation of the type \rfb{Adir} we
refer, for instance, to \cite[Sect.~2]{art11}. Essentially, it means
that if we integrate both sides in a suitable extrapolation space
from 0 to $t$, then (for every $t\geq 0$) we get an equality in the
state space $X\times X_c$. {It can be verified by substitution
into \eqref{Adir} that a possible state trajectory of $\Sig_L$ is}
\begin{equation} \label{SS}
   z_{ss}(t) \m=\m (i\o_j I-A^K)^{-1}B^L \sbm{w(t)\\ v(t)\\ r(t)}
\end{equation}
(this is called the {\em steady-state} state trajectory). The above
expression makes sense, because we know from Theorem
\ref{theorem:stab} that $i\o_j\in\rho(A^K)$. Denote $z_{tr}=z-z_{ss}$
(this is called the {\em transient} state trajectory), then it is easy
to see that $z_{tr}$ is a strong solution of $\dot z_{tr}(t)=A^K
z_{tr}(t)$. Since $A^K$ is strongly stable, we have \m $\lim_{t\rarrow
\infty}z_{tr}(t)=0$ (so that for large $t$ we have $z(t)\approx z_{ss}
(t)$). In particular, it is now clear that any state trajectory $z$ is
bounded, as required in the output regulator problem.

The alternative output function $e=r-y$ of $\Sig_L$ can be
decomposed as $e=e_{ss}+e_{tr}$, where $e_{ss}$ is the component
corresponding to $z_{ss}$ and $e_{tr}$ is the component corresponding
to $z_{tr}$:
$$ e_{ss} = \Psi^L_\infty z_{ss}(0) +\fline^L_\infty \sbm{w\\
   v\\ r},\ \ \ e_{tr} = \Psi^L_\infty z_{tr}(0),$$
where $\Psi^L_\infty$ is the extended output map of $\Sig_L$ for the
output $e$, as introduced after \rfb{Jeremy_Bowen} and $\fline^L
_\infty$ is the extended input-output map of $\Sig_L$ for the output
$e$, see \cite[Sect.~2]{art10} for the concepts. {According to
Theorem \ref{Macron}, $\|e_{tr}(t)\|^2$ and $\|e_{tr}(t)\|$ tend to
$0$ when filtered.}

{
Now we prove that $e_{ss}=0$. We denote by $[\CD]^{Le}$ the combined
observation/feedthrough operator of \m $\Sig_L$ for the output $e$,
see \cite[Sect.~3]{art11} for the background on this concept.} It is
clear from \rfb{SS} that $[z_{ss}(t)\ w(t)\ v(t)\ r(t)]^\top\in\Dscr
([\CD]^{Le})$ for all $t\geq 0$. It follows from \cite[Theorem 3.1]
{art11} that for all $t\geq 0$,
$$ e_{ss}(t) \m=\m [\CD]^{Le} \sbm{ z_{ss}(t)\\ w(t)\\ v(t)\\ r(t)}
   \ \ \ \mbox{(now use \rfb{SS})}$$
$$ = [\CD]^{Le} \sbm{ (i\o_j I-A^K)^{-1}B^L\\[2pt] I} \nm \sbm{w(t)\\
   v(t)\\ r(t)} \nm= \GGG^{Le}(i\o_j) \sbm{w(t)\\ v(t)\\ r(t)} \nm,$$
where $\GGG^{Le}$ is the transfer function of $\Sig_L$ from $\sbm{w\\
v\\ r}$ to $e$. In the last step, we have used
\cite[eq.~(3.4)]{art11}. From \rfb{tf:input_to_output_1} and
\rfb{tf:input_to_output}, the transfer function $\GGG^{Le}$ is
$$ \GGG^{Le} \m=\m \bbm{ -S\PPP_w \ \m & -S\PPP_u \ \m & S}.$$
From the fact that $S(i\o_j)=0$ (Theorem \ref{theorem:stab}) it now
follows that $e_{ss}=0$. {Hence, $\|e(t)\|^2\rarrow 0$ when
filtered.} \hfill $\square$ \end{pf}

\begin{remark} \label{Remark-bounded-obserCop}
Under the conditions of Theorem \ref{thm:main}, if the observation
operator $C$ of \m $\Sigma_p$ is bounded from $X$ to $U$, then we
obtain a stronger result: the norm of the tracking error, $\|e(t)\|$
converges to zero. To see this fact, we use the decomposition
$e=e_{ss}+e_{tr}$ as in the proof of Theorem \ref{thm:main}. It
follows from the boundedness of $C$ and the strong stability of the
feedback system $\Sig_L$ that $\|e_{tr}(t)\|$ converges to
zero. Hence, $\|e(t)\|\rarrow 0$.
\end{remark}

\begin{remark} {
If we assume in Theorem \ref{thm:main} one more condition, that the
plant $\Sig_p$ is a regular system, then the minimal controller
from \rfb{minreal} solves the {\em robust regulation problem}, as
defined in \cite{Paun2017rob}.} Indeed, it is easy to see from
\rfb{minreal} that the controller satisfies the so-called
$\Gscr$-conditions introduced in \cite{Paun2017rob}:
\begin{align*}
   & \Ran\, (i\o_j I-a)\bigcap \Ran\, b \m=\m \{0\} \FORALL
   j \in \Jscr, \\ & \Ker\; b \m=\m \{0\} \m.
\end{align*}
Moreover, since $i\o_j\in\rho(A^K)$, according to
\cite[Theorem 3.8]{Paun2017rob}, this controller solves the robust
regulation problem. In particular, the tracking error satisfies
$$ \int_t^{t+1} \|e(t)\|\dd t \m\rarrow\m 0\ \mbox{ as }\
   t \m\rarrow\m \infty,$$
from which it follows (using Proposition \ref{Fordo}) that
$\|e(t)\|$ tends to zero when filtered.
The robust regulation in the sense of \cite{Paun2017rob}
considers only perturbed closed-loop systems that remain regular and
strongly stable. \m
\end{remark}

\begin{remark} Theorems \ref{theorem:well}-\ref{thm:main} show
that the design of the proposed controller $\Sig_c$ requires little
information about the plant. $\Sig_c$ is robust in the following
sense: Consider a perturbed system $\tilde\Sig_p$ that is still
well-posed and impedance passive. If its transfer function $\tilde{P}$
still satisfies Assumption 1.1 (invertibility of $\tilde{P}(i\o_j)$),
then $\Sig_c$ will still achieve an impedance passive and input-output
stable closed-loop system. If the perturbed system satisfies the
structural assumptions of Theorem \ref{thm:main} ($\Ker\m\tilde{B}=\{
0\}$, $\sigma(\tilde{A})\cap i\rline$ is countable, and $\sigma_p
(\tilde A)\cap i\rline$ consists only of observable eigenvalues),
then $\Sig_c$ solves the output regulator problem for $\tilde\Sig_p$.
\m \end{remark}

\begin{remark}
We feel that it is difficult to find generally valid guidelines
on the parameter tuning of the controller \rfb{tf_controller}.
According to well known root locus principles, for small $c_j>0$,
the closed-loop eigenvalue trajectory starting at $i\o_j$ will move
away from $i\o_j$, so increasing $c_j$ at the beginning improves the
real part of this eigenvalue (by pushing it to the left). However, as
we increase $c_j$ to large values, the eigenvalue will converge to a
zero of the loop gain $\PPP_u\cdot\gg$, or to $\infty$. The zeros of
\m $\PPP_u$ may be on the imaginary axis, so that increasing $c_j$ too
much may bring the eigenvalue back, closer to the imaginary axis.
Without further research on this topic, we can only recommend
empirical tuning based on simulations. \end{remark}

\section{Examples} \setcounter{equation}{0} \label{sec4} 

\begin{example} {{Strong stabilization of Timoshenko beam by
an internal model controller.}}
\end{example}
{
Consider a Timoshenko beam on spatial domain $[0,1]$ with transverse
displacement $w(x,t)$ and rotation angle $\phi(x,t)$. The dynamics are
governed by the coupled partial differential equations
\begin{equation} \label{TimoBeam}
   \begin{aligned} \rho(x) w_{tt}(x,t) =\; &\partial_x [ K(x)(w_x(x,t)
   - \phi(x,t) ) ], \\ I_\rho(x) \phi_{tt}(x,t) =\; & \partial_x
   [ EI(x) \phi_x(x,t)  ] \\ &+ K(x)(w_x (x,t) - \phi(x,t) ),
   \end{aligned}
\end{equation}
where mass density $\rho$, moment of inertia $I_\rho$, bending
stiffness $EI$, and shear stiffness $K$ are spatially varying
parameters. We assume that $\rho,I_\rho,EI,K\in C^1([0,1];\rline)$
and they are strictly positive. The system is clamped at $x=0$,
meaning that
$$ w(0,t) \m=\m 0 \m,\qquad \phi(0,t) \m=\m 0 \m.$$
At the free end $x=1$, the  control \textit{input} $u(t)$ and
collocated \textit{output} $y(t)$ are defined as:
\begin{equation} \label{Beam:boundary}
   u(t) \m= \begin{bmatrix} K(1)(w_x(1,t) - \phi(1,t)) \\ EI(1)
   \phi_x(1,t) \end{bmatrix}, \quad y(t) = \begin{bmatrix} w_t(1,t) \\
   \phi_t(1,t) \end{bmatrix}.
\end{equation}
For the background on beam equations we refer to the survey
\cite{Han_on_beams} and in particular, for 
{Timoshenko} beams modelled
in the port-Hamiltonian context, to
\cite{Jacob_Zwart,BirHan,Machelli,paunonenYanRam2021}.}

{
We interconnect this plant $\Sig_p$ with an \textbf{IMC}
with output $u^{out}$, and input $e=-y$ (meaning that $r=0$), as
shown in Fig.~\ref{fig:2}. Assume that $u=u^{out}+v$, where $v$ is
an input disturbance to the plant.}

{
We cast the plant into a port-Hamiltonian framework. Define the state
vector $z = [w_x - \phi, \phi_x, \rho w_t, I_\rho \phi_t]^T$ in space
$X=(L^2[0,1])^4$. The energy inner product is given by $\langle z,
\phi \rangle_X=\int_0^1 z^\top H(x)\phi\,\dd x$, where
$$H(x) \m=\m {\rm diag}(K(x), EI(x), 1/\rho(x), 1/I_\rho(x))$$
is a strictly positive energy density matrix.}

{The co-energy variables are $\xi=Hz$. The system $\Sig_p$ in
\rfb{TimoBeam}-\rfb{Beam:boundary} can be written in the
abstract form
\begin{equation}\label{abs:beam}
    \dot{z}(t) = Az(t) + Bu(t), \ \ \ y(t) = Cz(t),
\end{equation}
where the system operator $A z=P_1 \partial_x \xi + P_0 \xi$ with
\begin{equation*}
   \begin{aligned} & \Dscr(A) \m=\m \left\{ z \in X \mid \xi \in
   (\Hscr^1(0,1))^4,\ \xi = Hz, \right. \\ & \qquad \qquad \left.
   \ \xi_{3,4}(0)=0, \ \xi_{1,2}(1)=0 \right\},\\ & P_1 \m=
   \begin{bmatrix} 0 & I_2 \\ I_2 & 0 \end{bmatrix}, \ P_0 \m=
   \begin{bmatrix} 0 & -N \\ N^T & 0 \end{bmatrix},\ N \m=
   \begin{bmatrix} 0 & 1 \\ 0 & 0 \end{bmatrix}, \\ & B = \delta(x-1)
   \begin{bmatrix} 0_{2 \times 2} \\ I_2 \end{bmatrix}, \end{aligned}
\end{equation*}
and $C z = [\xi_3(1)\ \xi_4(1)]^\top$. It is easy to check that $A$
is skew-adjoint and has compact resolvents.}

{
\begin{proposition}
The system $\Sig_p$ from \rfb{abs:beam} is well-posed and it is
impedance passive from $u$ to $y$.
\end{proposition}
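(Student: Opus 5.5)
The plan is to view $\Sig_p$ as a boundary control system of port-Hamiltonian type and to get both properties from the energy balance together with standard port-Hamiltonian theory.

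\textbf{Energy identity.} For $z_0\in\Dscr(A)$ and smooth $u$ I will compute $\frac{\dd}{\dd t}\|z\|_X^2=2\Re\langle \dot z, z\rangle_X$. Since $\langle \dot z,z\rangle_X=\int_0^1 \xi^*\,\partial_t z\,\dd x$ and $\partial_t z=P_1\partial_x\xi+P_0\xi$, the skew-symmetry of $P_0$ kills that term. The symmetry of $P_1$ gives $2\Re\int_0^1\xi^*P_1\partial_x\xi\,\dd x=[\xi^*P_1\xi]_0^1=2\Re(\xi_{1:2}^*\xi_{3:4})\big|_0^1$. The clamped condition gives $\xi_{3,4}(0)=0$, and at $x=1$ we have $\xi_{1:2}(1)=u$ and $\xi_{3:4}(1)=y$. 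Hence
$$\frac{\dd}{\dd t}\|z(t)\|^2 = 2\Re\langle u(t),y(t)\rangle ,$$
which is \rfb{Energix} with equality. In particular, with $u=0$ the operator $A$ is skew-adjoint, consistent with the text.

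\textbf{Well-posedness.} I will use the theory of boundary control port-Hamiltonian systems on a one-dimensional interval (Jacob--Zwart, Le Gorrec--Zwart--Maschke). The system has the form $\dot z=P_1\partial_x(Hz)+P_0Hz$ with $P_1$ invertible and Hermitian, $H\in C^1$ and uniformly positive. The boundary input $u=W_B\sbm{\xi(1)\\ \xi(0)}$ and output $y=W_C\sbm{\xi(1)\\ \xi(0)}$ involve $W_B$ of full rank $4$, counting the homogeneous conditions $\xi_{3,4}(0)=0$ as part of the boundary map. The associated matrix $W_B\Sigma W_B^*\ge 0$, by the same boundary-flux computation as above.

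Under these hypotheses, the theorem of Zwart, Le Gorrec, Maschke and Villegas (``Well-posedness and regularity of hyperbolic boundary control systems on a one-dimensional spatial domain'') gives that the system is well-posed, and in fact regular. The $C^1$ regularity of $H$ is exactly what that theorem needs to diagonalize $P_1H$ via Riemann invariants. Equivalently, one can argue directly: the input-to-state and state-to-output maps on $[0,\tau]$ are bounded because the energy identity integrated over $[0,\tau]$ bounds $\|z(\tau)\|$ and the output norms in terms of $\|z_0\|$, $\|u\|_{L^2}$ and $\|y\|_{L^2}$. What remains is a bound of $\|y\|_{L^2}$ by the data, and that comes from the hyperbolic structure, for instance a multiplier estimate with $x\,\xi^*P_1H^{-1}\xi$ type terms.

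The transfer function is then bounded on some right half-plane, and admissibility of $B$ and $C$ follows. Passivity then follows by integrating the energy identity for smooth data and extending by density to all $z_0\in X$ and $u\in L^2$, using well-posedness; this yields \rfb{Tesla}.

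\textbf{Main obstacle.} The hardest step is the output estimate in the well-posedness part, i.e.\ verifying the hypotheses of the cited port-Hamiltonian well-posedness theorem, or proving the admissibility of $C$ directly. I would first try to cite that theorem, checking carefully the rank condition and the positivity of $H$ and its regularity. If that fails, I would fall back on the multiplier estimate.
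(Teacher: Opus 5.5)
Your proposal is correct and takes essentially the same route as the paper. The paper also checks the hypotheses of the port-Hamiltonian well-posedness theorem: it writes the boundary conditions in flow/effort form with $W_B\Upsilon W_B^\top=0$ and gives an explicit $C^1$ diagonalization $P_1H=\bar S^{-1}\Delta\bar S$. It then obtains passivity from the same boundary energy balance that you compute.

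The one step you leave loose is the diagonalization. $C^1$ regularity of $H$ alone does not in general guarantee a $C^1$ eigenbasis. Here, however, $P_1H$ splits into the blocks $\sbm{0&1/\rho\\ K&0}$ and $\sbm{0&1/I_\rho\\ EI&0}$, with simple eigenvalues $\pm\sqrt{K/\rho}$ and $\pm\sqrt{EI/I_\rho}$. So $C^1$ factors $\bar S$ and $\Delta$ do exist, and the paper writes them out explicitly.
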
}

\begin{pf} {
We now use Theorem \cite[Theorem 5.2]{BirHan} to obtain the
well-posedness of \eqref{abs:beam}. For that purpose, we express the
boundary variables using the boundary flow $f_\partial$ and effort
$e_\partial$:
\begin{equation}
   \begin{bmatrix} f_\partial \\ e_\partial \end{bmatrix} \m=\m
   \frac{1}{\sqrt{2}} \begin{bmatrix} P_1 & -P_1 \\ I_4 & I_4
   \end{bmatrix} \begin{bmatrix} \xi(1) \\ \xi(0) \end{bmatrix}.
\end{equation}
The boundary constraints $u(t) = [\xi_1(1), \xi_2(1)]^\top$ and
$[\xi_3(0), \xi_4(0)]^\top = 0$ can be reformulated as $W_B
[\begin{smallmatrix} f_\partial \\ e_\partial \end{smallmatrix}] =
[\begin{smallmatrix} u(t) \\ 0 \end{smallmatrix}]$, where the
boundary matrix is:
\begin{equation}
   W_B \m=\m \frac{1}{\sqrt{2}} \begin{bmatrix} 0 & I_2 & I_2 & 0 \\
   -I_2 & 0 & 0 & I_2 \end{bmatrix}.
\end{equation}
With $\Upsilon=\left[\begin{smallmatrix} 0 & I_4 \\ I_4 & 0
\end{smallmatrix}\right]$, a direct algebraic computation verifies
that
\begin{equation}
   W_B \Upsilon W_B^\top \m=\m 0.
\end{equation}
Let $c_s=\sqrt{K/\rho}$, $c_b = \sqrt{EI/I_\rho}$, $Z_s = \sqrt{\rho
K}$ and $Z_b = \sqrt{I_\rho EI}$. We have $P_1 H(x) = \bar{S}^{-1}
(x)\Delta(x) \bar{S}(x)$, where the eigenvalue matrix is $\Delta =
\operatorname{diag}(c_s, -c_s, c_b, -c_b)$, and
\begin{equation}
   \bar{S} = \m\half \left[\begin{smallmatrix} 1 & 0 & Z_s^{-1} & 0
   \\ 1 & 0 & -Z_s^{-1} & 0 \\ 0 & 1 & 0 & Z_b^{-1} \\ 0 & 1 & 0 &
   -Z_b^{-1} \end{smallmatrix}\right], \quad
   \bar{S}^{-1} = \left[\begin{smallmatrix} 1 & 1 & 0 & 0 \\
   0 & 0 & 1 & 1 \\ Z_s & -Z_s & 0 & 0 \\ 0 & 0 & Z_b & -Z_b
   \end{smallmatrix}\right].
\end{equation}
Clearly, $\bar{S}, \bar{S}^{-1}$ and $\Delta$ are continuously
differentiable. By \cite[Theorem 5.2]{BirHan}, we obtain the
conclusion. A formal calculation using \cite[formula (5)]{BirHan}
shows that $\Sig_p$ is impedance passive from $u$ to $y$.
\hfill $\square$} \end{pf} \vspace{-5mm}

{
\begin{proposition}\label{pro:obs}
The pair $(A,C)$ has no unobservable eigenvalues.
\end{proposition}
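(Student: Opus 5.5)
We argue by contradiction. Suppose that $p\in\cline$ is an eigenvalue of $A$ with an eigenvector $z\neq 0$ satisfying $Cz=0$. Since $A$ is skew-adjoint, $p=i\mu$ with $\mu\in\rline$. Write the eigenvalue equation in co-energy variables $\xi=Hz\in(\Hscr^1(0,1))^4$, so that $P_1\partial_x\xi+P_0\xi=i\mu H^{-1}\xi$. This is a first-order linear ODE system in $x$ with continuous coefficients. The domain conditions give $\xi_3(0)=\xi_4(0)=0$ and $\xi_1(1)=\xi_2(1)=0$. The unobservability condition $Cz=0$ gives $\xi_3(1)=\xi_4(1)=0$. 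Hence $\xi(1)=0$, all four components vanishing.

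Because $P_1$ is invertible ($P_1^2=I$), the ODE can be solved for the derivative:
$$\partial_x\xi \m=\m P_1\big(i\mu H^{-1}(x)-P_0\big)\xi \m.$$
The coefficient matrix belongs to $C([0,1];\cline^{4\times4})$, since $\rho,I_\rho,EI,K$ are $C^1$ and strictly positive. By uniqueness for the initial value problem (Picard--Lindel\"of, or Gr\"onwall's inequality applied backward from $x=1$), the condition $\xi(1)=0$ forces $\xi\equiv 0$ on $[0,1]$. Since $H$ is invertible, $z=H^{-1}\xi=0$, which contradicts $z\neq0$. Therefore every eigenvalue of $A$, in particular every imaginary one, is observable.

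Two details need care. First, we must confirm that $\xi\in(\Hscr^1)^4$ is enough to interpret the ODE pointwise a.e. and to apply uniqueness. This holds because $\xi$ is absolutely continuous and satisfies the integral form of the equation. Second, we should compute the system directly rather than rely on the shorthand $P_1\partial_x\xi$. In terms of $w,\phi$ it reads $i\mu\rho w=\partial_x(K(w_x-\phi))$ and the analogous equation for $\phi$, which matches the matrix form above. The case $\mu=0$ needs no separate treatment: the same uniqueness argument applies, and in fact $0\notin\sigma_p(A)$ by the clamped conditions.

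The main obstacle is essentially the bookkeeping. We must check that the boundary conditions in $\Dscr(A)$ and the output $C$ together annihilate all four components of $\xi$ at the single endpoint $x=1$, rather than splitting them between the two ends. Once that is verified, the argument is immediate from ODE uniqueness.
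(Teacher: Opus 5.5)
Your proof is correct and follows the paper's argument essentially verbatim. You combine the domain conditions $\xi_1(1)=\xi_2(1)=0$ with $Cz=0$ to get $\xi(1)=0$, rewrite the eigenvalue equation as $\xi'=P_1(pH^{-1}-P_0)\xi$ using $P_1^{-1}=P_1$, and conclude $\xi\equiv 0$ (hence $z=0$) by uniqueness for the linear ODE. Your extra remarks (that $p$ is imaginary by skew-adjointness, and that $\Hscr^1$ regularity suffices for uniqueness via Gr\"onwall) are harmless refinements that the paper's version does not need.
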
} \vspace{-2mm}

\begin{pf} {
Suppose that there exists an unobservable eigenvalue $p\in\cline$
with a non-trivial eigenvector $z\in\Dscr(A)$ ($z\neq 0$). The
conditions $Az=pz$ and $Cz=0$ imply that $P_1\xi'+P_0\xi=p H^{-1}
(x)\xi$. Since $P_1^{-1} = P_1$, isolating the derivative yields
\begin{equation} \label{eq:ode}
    \xi'(x) =\m P_1\big(p H^{-1}(x) - P_0\big)\xi(x) := M(x)\xi(x).
\end{equation}
Because $H^{-1}(x)$ is continuously differentiable, the coefficient
matrix $M(x)$ is continuous on $[0,1]$. The domain constraint $z\in
\Dscr(A)$ yields $\xi_{1}(1)=\xi_{2}(1)=0$, and the unobservability
condition $Cz = 0$ yields $\xi_3(1)=\xi_4(1)=0$. Together, these
imply that $\xi(1) = {0}$. According to the Picard-Lindel\"of
theorem, the unique solution to the linear homogeneous ODE
\rfb{eq:ode} with a trivial terminal condition is $\xi(x)\equiv
{0}$ for all $x\in [0,1]$. Given that $H(x)$ is strictly positive
definite, $\xi(x)\equiv 0$ implies $z(x)=H^{-1}(x)\xi(x)\equiv 0$.
This contradicts the assumption that $z$ is a non-trivial
eigenvector. \hfill $\square$} \end{pf}

{
Using the above proposition and Theorems \ref{theorem:well},
\ref{thm:main}, we can derive our main result about this example:}

\vspace{1mm}
\begin{theorem} \label{theorem:SCOLE} {
Let $\Sig_p$ be the well-posed system corresponding to the Timoshenko
beam model \rfb{TimoBeam}-\rfb{Beam:boundary}, with transfer function
$[\PPP_w\ \PPP_u]$ and let $\Sigma_c$ be a minimal \textbf{IMC} as in
\rfb{minreal}, with resonant frequencies \m $\o_1, ...\,\o_n$ that
are not among the zeros of $\PPP_u$ and also not among the
eigenvalues of $A$. Let $\Sig_L$ be the closed-loop system obtained
when $\Sig_p$ and $\Sig_c$ are interconnected as in Fig.~\ref{fig:2},
with state space is $X^K=X\times\cline^l$\m, where $X$ is the state
space of $\Sig_p$ and $\cline^l$ is the state space of $\Sig_c$.}

{
Then $\Sig_L$ is well-posed with input $v$, state $\sbm{x\\ q}\in
X^K$ and output $\sbm{y\\ u^{out}}$, where $x$ is the state of \m
$\Sig_p$ and $q$ is the state of \m $\Sig_c$. Moreover, if $v$ is as
in \rfb{exosig} and $r=0$, then the signal $t\mapsto \|y(t)\|^2$
tends to zero when filtered.}
\end{theorem}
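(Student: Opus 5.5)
The plan is to obtain Theorem \ref{theorem:SCOLE} as a direct application of Theorems \ref{theorem:well}, \ref{theorem:stab} and \ref{thm:main}. The work consists of verifying their hypotheses for the Timoshenko beam; we take $w=0$ and $r=0$, so the closed-loop system is $\Sig_K$ of Fig.~\ref{fig:2}. By the preceding proposition, $\Sig_p$ is well-posed and impedance passive from $u$ to $y$, with $U=\cline^2$.

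\textbf{Assumption \ref{ass_trans}.} We need $i\o_j\in\rho(A)$ and $\PPP_u(i\o_j)$ invertible. Since $A$ has compact resolvent, $\sigma(A)=\sigma_p(A)$. The hypothesis that $\o_j$ is not an eigenvalue (read as $i\o_j\notin\sigma_p(A)$) therefore gives $i\o_j\in\rho(A)$. The hypothesis that $\o_j$ is not a zero of $\PPP_u$, meaning $\det\PPP_u(i\o_j)\neq 0$, gives invertibility. Theorem \ref{theorem:well} then yields that $\Sig_K$ is well-posed, input-output stable and impedance passive, which is the well-posedness claim.

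\textbf{Hypotheses of Theorem \ref{theorem:stab}.} Three items must be checked.
\begin{enumerate}
\item $\Ker B=\{0\}$. The operator $B$ acts as $u\mapsto \delta(x-1)[0\ \ I_2]^\top u$. If $Bu=0$ as an element of $X_{-1}=\Dscr(A^*)'$, then $B^*=C$ (collocation) gives $\langle u, Cz\rangle=0$ for all $z\in\Dscr(A^*)$. The traces $\xi_{3,4}(1)$ take arbitrary values on $\Dscr(A^*)=\Dscr(A)$, since only $\xi_{3,4}(0)$ and $\xi_{1,2}(1)$ are constrained. Hence $u=0$.
\item Every imaginary eigenvalue is observable. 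This is Proposition \ref{pro:obs}.
\item $\sigma(A)\cap i\rline$ is countable. This holds because $A$ has compact resolvent, so its spectrum is discrete.
\end{enumerate}
Theorem \ref{theorem:stab} then gives strong stability of $\tline^K$, $i\o_j\in\rho(A^K)$ and $S(i\o_j)=0$.

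\textbf{Output statement.} This is the argument of Theorem \ref{thm:main} with $r=0$ and input $v$, so $e=-y$ and $\|y\|^2=\|e\|^2$. By superposition, take a single frequency $v=v_je^{i\o_jt}$. Decompose the state as $z=z_{ss}+z_{tr}$, where $z_{ss}(t)=(i\o_jI-A^K)^{-1}B^K\sbm{v(t)\\0}$.
\begin{enumerate}
\item The steady-state output is $\GGG^K_{yv}(i\o_j)v(t)=S(i\o_j)\PPP_u(i\o_j)v(t)=0$.
\item The transient output is the output of the strongly stable $\Sig_K$ from the initial state $z_{tr}(0)$. By Theorem \ref{Macron}, $\|y_{tr}\|^2$ tends to zero when filtered.
\end{enumerate}
With several frequencies, the steady-state parts of all terms vanish. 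The transient is a single trajectory of $\tline^K$, so Theorem \ref{Macron} still applies to it and the conclusion follows.

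\textbf{Main obstacle.} The hard part is verifying $\Ker B=\{0\}$ rigorously in the extrapolation-space sense. The collocated duality $B^*=C$ needs care because of the $H$-weighted inner product. The safest route is to compute $\langle Bu,z\rangle=u^\top[\xi_3(1)\ \xi_4(1)]^\top$ directly from the integration-by-parts formula (the power balance behind impedance passivity) and then choose $z$ with prescribed $\xi_{3,4}(1)$. A secondary subtlety is interpreting "not among the eigenvalues of $A$" as $i\o_j\notin\sigma(A)$; this is legitimate because $\sigma(A)=\sigma_p(A)$.
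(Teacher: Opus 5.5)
Your proposal is correct and follows essentially the paper's route: verify Assumption~\ref{ass_trans} (via compact resolvents and invertibility of $\PPP_u(i\o_j)$), $\Ker B=\{0\}$, observability of imaginary eigenvalues from Proposition~\ref{pro:obs}, and countability of $\sigma(A)\cap i\rline$, then invoke Theorems~\ref{theorem:well}--\ref{thm:main}. The only differences are that you justify $\Ker B=\{0\}$ through the collocation identity $B^*=C$, which the paper simply asserts, and that you rerun the steady-state/transient argument of Theorem~\ref{thm:main} with $r=0$ instead of citing it.
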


\begin{pf} \m 
{We show the well-posedness result first. Since $i\o_j$ is not
among the zeros nor among the eigenvalues of $\Sig_p$, by the
definition of zeros for $\Sig_p$, see for instance \cite[Definition
V.1]{art66}, we have that ${\rm rank}(\PPP_u(i\o_j))=2$. Hence, the
conditions in Assumption \ref{ass_trans} are satisfied. Then, from
Theorem \ref{thm:main} we know that the coupled system $\Sig_L$ is
well-posed on $X^K$ with input $v$, state $\sbm{x\\ q}\in X^K$ and
output $\sbm{y\\ u^{out}}$.}

{Now we check the remaining conditions for stability. Clearly
$\Ker\m B=\{0\}$. Since $A$ has compact resolvents, we know that
$\sigma(A)\cap i\rline$ is countable. From Proposition \ref{pro:obs},
we know that $\sigma(A)\cap i\rline$ consists only of observable
eigenvalues. Thus $\Sig_L$ is strongly stable on $X^K$. The
conclusion then follows directly from Theorem \ref{thm:main}.
\hfill $\square$} \end{pf} \vspace{-4mm}

\textbf{Simulation setup:} \m {Numerical simulations were
implemented in MATLAB, using the finite difference method for spatial
discretization and the stiff solver \texttt{ode15s} for time
integration. This numerical scheme is also adopted in the subsequent
example. In \dref{TimoBeam}, we set $K(x)=\rho(x)= EI(x)=I_\rho(x)=1$
for $x\in[0,1]$. We only use the torque control $u_2$ and the shear
force control $u_1$ is not implemented, i.e., $u_1(t)=0$. The output
signal is only the angular velocity $y_2$. The disturbance $v_2(t)=
\sin(3t)$. The parameters of the transfer function
\rfb{tf_controller} are selected as $c_1=1$, $\o_1=3$, and $d=5$. One
impedance passive and minimal realization $\Sig_c$ of the transfer
function \rfb{tf_controller} is
\begin{equation}\label{controller:TimoBeam}
   \begin{aligned}
   & \dot{q}(t) \m=\m \bbm{0 & -3 \\ 3 & 0} q(t)+\bbm{1 \\ 1} e(t),\\
   & u^{out}(t) \m=\m \bbm{1 & 1} q(t) + 5 e(t),
\end{aligned}
\end{equation}
which is connected to the plant as shown in Fig.~\ref{fig:2}. }

\noindent {\bf Result:} {
A computation shows that $|\PPP_u(\pm 3i)|\approx 0.27$, so that
$\PPP_u(\pm 3i)$ is invertible. The closed-loop system response in
Fig.~\ref{fig:Pu_1} is consistent with the asymptotic stability
of \rfb{TimoBeam} with the controller $\Sig_c$.}

\begin{figure}[htbp] \centering 
   \includegraphics[width=1\linewidth]{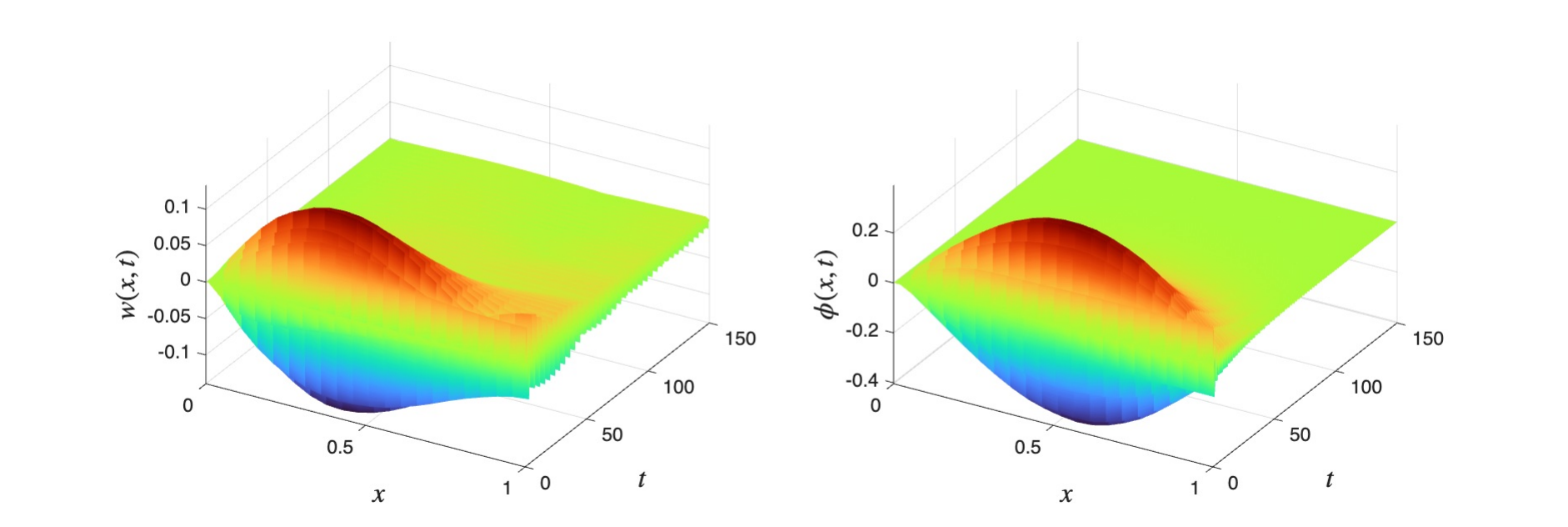}
   \caption{Plots of the evolution of the system \rfb{TimoBeam}
     with the feedback controller \rfb{tf_controller}}
   \label{fig:Pu_1}
\end{figure}



\begin{example} {Output regulation for a voltage-actuated
piezoelectric beam with magnetic effects.}
\end{example}

{
The plant $\Sig_p$ represents the stretching dynamics of a voltage
actuated piezoelectric beam with magnetic effects, as modeled in
\cite{morris2014modeling}. (For the stabilizability of a current
controlled piezoelectric beam we refer to \cite{Ozer_Morris_2020}
and for a 3 dimensional extension of the model from
\cite{morris2014modeling} we refer to \cite{LiaoFeng}.) $\Sig_p$ is
governed by
$$ \rho z_{tt}(x,t)-\alpha z_{xx}(x,t) + \gamma \beta p_{xx}(x,t)
   + f_1(x)w_1(t) \m=\m 0,$$
$$ \mu p_{tt}{(x,t)}-\beta p_{xx}(x,t) + \gamma \beta z_{xx}(x,t)
   + f_2(x)w_2(t) \m=\m 0,$$
$$ z(0,t)=p(0,t)=0,\ \ \alpha z_x(1,t)-\gamma \beta p_x(1,t)
   \m=\m 0,$$
\begin{equation} \label{eq:piezobeam}
   \beta p_x(1,t)-\gamma \beta z_x(1,t) \m=\m \frac{u(t)}{h} \m,
\end{equation} \vspace{-4mm}
$$ y(t) \m=\m \frac{1}{h} \m \dot{p}(1,t),$$
where $z(x,t)$ is the longitudinal displacement of the beam, $p(x,t)$
is the total electric charge accumulated from position 0 up to
position $x$ along the piezoelectric beam at time $t$, $h$ is the beam
thickness, $u(t)$ is the {\em control input}, the voltage applied
across the electrodes of the piezoelectric beam, $\mu$ is the magnetic
permeability of the beam, $\rho$ is the mass density per unit volume,
$\alpha$ is the elastic stiffness, $\gamma$ is the piezoelectric
coefficient, and $\beta$ is the impermittivity coefficient (the
inverse of the dielectric permittivity). It is assumed that
$$ \alpha_1 \m=\m \alpha - \gamma^2\beta \m>\m 0 \m.$$
The signal $y$ is the {\em output}, where $\dot{p}(1,t)$ is the total
current flowing through the electrodes. $w_i, i=1,2$ are the external
disturbances of the form \rfb{d1} and $f_1,f_2\in\Hscr^1(0,1)$ are
coefficient functions which are unknown.} The reference signal $r$
to be tracked is assumed to have the form \rfb{d2}.

The state space of $\Sig_p$ is
\begin{equation} \label{statespace2}
   X \m=\m (\Hscr_L^1(0,1))^2\times (L^2[0,1])^2,
\end{equation}
where $\Hscr_L^1(0,1)=\{z\in\Hscr^1(0,1)\,|\, z(0)=0\}$. The
equations \rfb{eq:piezobeam} can be rewritten as
\begin{equation}\label{eq:piebeam_state}
   \begin{cases} \dot{\psi} (t) \m=\m \Ascr \psi(t) + B u(t) +
   B_d w(t),\\ y(t) \m=\m B^* \psi(t), \end{cases}
\end{equation}
where the state of $\Sig_p$ is
\[ \begin{aligned}
   \psi(t) &= [\psi_1(t)\ \ \psi_2(t)\ \ \psi_3(t)\ \ \psi_4(t)]
   ^\top\\ &= [z(\cdot,t)\ \ p(\cdot,t)\ \ \dot{z}(\cdot,t)\ \
   \dot{p}(\cdot,t)]^\top , \end{aligned} \]
   and
$$ \Ascr\psi \m=\m \bbm{ \psi_3 \\[5pt] \psi_4 \\[5pt]
   \dfrac{\alpha}{\rho} \dfrac{\partial^2 \psi_1}
   {\partial x^2} - \dfrac{\gamma \beta}{\rho} \dfrac{\partial^2
   \psi_2}{\partial x^2} \\[10pt] -\dfrac{\gamma \beta}{\mu}
   \dfrac{\partial^2 \psi_1}{\partial x^2} + \dfrac{\beta}{\mu}
   \dfrac{\partial^2 \psi_2}{\partial x^2} },$$
$$ \Dscr(\Ascr) \m=\m \left\{ \psi \in X_0 \;\middle|\;
   \psi_{1x}(1) \m=\m \psi_{2x}(1) \m=\m 0 \right\},$$
$$ X_0 \m=\m (\Hscr^2(0,1) \cap \Hscr^1_L(0,1))^2 \times
   (\Hscr^1_L(0,1))^2,$$
$$ w(t) \m=\m [\, w_1(t) \;\; w_2(t) \,]^\top,$$
{ \vspace{-2mm}
$$ B \m= \bbm{ 0_{2\times1} \\ B_0}, \ \ B_0 \m= \bbm{0 \\
   \frac{1}{h}\delta_1 } ,\ \ B_d \m= \bbm{ 0 & 0 \\ 0 & 0 \\
   -f_1 & 0 \\ 0 & -f_2 } ,$$
\vspace{2mm}%
where $\delta_1$ is the functional of point evaluation at $x=1$.}

{
\begin{proposition} \label{prop:exa2_skew}
The operator $\Ascr$ is skew-adjoint on $X$ and has compact
resolvents. The system $\Sig_p$ from \rfb{eq:piebeam_state} is
well-posed and it is impedance passive from $u$ to $y$. (In
particular, $B$ is an admissible control operator for the semigroup
generated by $\Ascr$.)
\end{proposition}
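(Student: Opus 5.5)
The plan is to work in the energy inner product on $X$, namely
$$\langle\psi,\phi\rangle_X=\int_0^1\Big(\alpha_1\psi_{1x}\bar\phi_{1x}+\beta(\gamma\psi_{1x}-\psi_{2x})(\gamma\bar\phi_{1x}-\bar\phi_{2x})+\rho\psi_3\bar\phi_3+\mu\psi_4\bar\phi_4\Big)\dd x,$$
whose quadratic form is $\int_0^1(\alpha z_x^2-2\gamma\beta z_xp_x+\beta p_x^2+\rho\dot z^2+\mu\dot p^2)\dd x$. Because $\alpha_1=\alpha-\gamma^2\beta>0$, this form is equivalent to the standard norm of $(\Hscr^1_L)^2\times(L^2)^2$, the condition $\psi(0)=0$ killing constants via Poincaré. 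Throughout I take all coefficients constant and positive, with $\alpha_1>0$.

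\textbf{Skew-adjointness.} For $\psi,\phi\in\Dscr(\Ascr)$ I will integrate by parts in $\langle\Ascr\psi,\phi\rangle_X$. The boundary terms at $x=0$ vanish because $\psi_3,\psi_4,\phi_3,\phi_4\in\Hscr^1_L$. The boundary terms at $x=1$ vanish because $\psi_{1x}(1)=\psi_{2x}(1)=0$. This should give $\langle\Ascr\psi,\phi\rangle=-\langle\psi,\Ascr\phi\rangle$, so $\Ascr$ is skew-symmetric. To upgrade to skew-adjointness, I will show that $I-\Ascr$ (and likewise $I+\Ascr$) is onto. Given $f\in X$, I eliminate $\psi_3=\psi_1-f_1$ and $\psi_4=\psi_2-f_2$, which leaves a coupled elliptic system for $(\psi_1,\psi_2)$ with mixed Dirichlet/Neumann conditions. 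Its bilinear form is coercive on $(\Hscr^1_L)^2$, again because $\alpha_1>0$, so Lax–Milgram gives a weak solution and elliptic regularity gives $\Hscr^2$. The same estimate bounds the resolvent into $(\Hscr^2)^2\times(\Hscr^1)^2$, which embeds compactly into $X$ by Rellich; hence the resolvent is compact.

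\textbf{Well-posedness and passivity.} I will identify $B\in\Lscr(\cline,X_{-1})$ with $B^*\psi=\frac1h\psi_4(1)$ on $\Dscr(\Ascr)$. This should be checked via the duality pairing, using the boundary condition $\beta p_x(1)-\gamma\beta z_x(1)=u/h$ and integration by parts, so that the output is indeed the collocated $y=B^*\psi$. For a skew-adjoint generator with collocated $B$ and $B^*$, admissibility of $B^*$ (equivalently of $B$) implies the system $(\Ascr,B,B^*)$ is well-posed and impedance passive, in fact energy preserving with $\frac{\dd}{\dd t}\|\psi\|^2=2\Re\langle u,y\rangle$, which is \rfb{Energix}. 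I would cite the standard result for such systems (e.g.\ Curtain--Weiss/Tucsnak--Weiss on colocated systems), but this reduction still leaves the admissibility to be proved.

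\textbf{Admissibility of $B^*$ (main obstacle).} I need $\int_0^T|\psi_4(1,t)|^2\dd t\le c\|\psi(0)\|^2$. The plan is to diagonalize the second-order operator: the matrix $\bbm{\alpha/\rho & -\gamma\beta/\rho\\ -\gamma\beta/\mu & \beta/\mu}$ has two positive eigenvalues (its determinant is $\alpha_1\beta/(\rho\mu)>0$ and its trace is positive). A constant change of variables therefore decouples the system into two wave equations with constant speeds. The boundary conditions at $x=1$ are $\psi_{1x}=\psi_{2x}=0$, which remain homogeneous Neumann conditions after the transformation since they are a linear combination. Admissibility then follows either from the explicit eigenfunctions $\sin((k-\frac12)\pi x)$ together with Ingham's inequality, or from the standard multiplier $x\,\partial_x$ trace estimate for the 1-D wave equation. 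Finally, $B_d$ is bounded because $f_1,f_2\in L^2$, so it causes no difficulty.
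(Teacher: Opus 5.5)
There is a genuine gap at the well-posedness step. Your arguments for skew-adjointness (skew-symmetry in the energy inner product plus surjectivity of $I\pm\Ascr$ via Lax--Milgram), for compactness of the resolvent, and for admissibility of $B$ via decoupling into two wave equations are sound. They are more self-contained than the paper, which simply cites Lemma 3.1 and Theorem 3.5 of Morris--\"Ozer and then does a formal energy computation. The gap is the step you call a standard result.

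For a skew-adjoint generator, admissibility of $B$ (equivalently of $B^*$) does \emph{not} make the colocated system $(\Ascr,B,B^*)$ well-posed. Admissibility controls only the input-to-state and state-to-output maps. Well-posedness also requires the input--output map to be bounded, i.e.\ the transfer function must be bounded on some half-plane $\cline_\alpha$, and this is an independent condition. For example, on $X=L^2(0,\infty)$ let $(Ax)(\omega)=i\omega x(\omega)$ and let $B$ be the constant function $1\in X_{-1}$. By Plancherel, $B$ is admissible, yet for $\sigma>0$
\[ \PPP(\sigma)-\PPP(1)=\int_0^\infty\Big(\frac{1}{\sigma-i\omega}-\frac{1}{1-i\omega}\Big)\dd\omega=-i\log\sigma , \]
so no choice of feedthrough yields a well-posed system. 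The second-order structure does not rescue this in general: clustered eigenfrequencies produce the same logarithmic growth.

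The missing ingredient is a bound on $\PPP_u$ in a right half-plane, and your decoupling already supplies it. Let $M={\rm diag}(\rho,\mu)$ and $K=\sbm{\alpha & -\gamma\beta\\ -\gamma\beta & \beta}$, and choose $P$ with $P^\top MP=I$ and $P^\top KP={\rm diag}(c_1^2,c_2^2)$. The substitution $[z\ \ p]^\top=Pv$ turns the Laplace-transformed boundary value problem into $s^2\hat v_j=c_j^2\hat v_j''$, $\hat v_j(0)=0$, $c_j^2\hat v_j'(1)=P_{2j}\hat u/h$. Hence
\[ \PPP_u(s)=\frac{1}{h^2}\sum_{j=1}^2\frac{P_{2j}^2}{c_j}\tanh\frac{s}{c_j}, \]
which is bounded on every $\cline_\alpha$ with $\alpha>0$, since $|\tanh(s/c)|\leq\coth(\alpha/c)$ there. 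Combined with the admissibility of $B$ and $B^*$ that you already have, this gives well-posedness.

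Equivalently, after this bounded invertible change of state (unitary for the energy norms), $\Sig_p$ is the parallel connection of two 1-D wave equations with Neumann boundary control and colocated velocity observation, each of which is classically well-posed. This is exactly what the paper imports from Morris--\"Ozer's Theorem 3.5. Once it is in place, your energy identity gives impedance passivity.
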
}

{ \vspace{-2mm}
\begin{pf} \m The skew adjointness and the compact resolvents follow
from \cite[Lemma 3.1]{morris2014modeling} and the well-posedness from
$u$ to $y$ follows from \cite[Theorem 3.5]{morris2014modeling}. The
control operator $B_d$ is bounded and thus the well-posedness from
$w$ to $y$ is not a problem. Starting from \rfb{eq:piebeam_state},
a formal calculation gives \dref{Tesla}, and thus $\Sig_p$ is
impedance passive from $u$ to $y$. \hfill $\square$ \m \end{pf}}

The transfer function $\PPP_u$ from $u$ to $y$ has been
computed (as a closed formula) in
\cite[Appendix, A.4]{morris2014modeling}.

Define the numbers $\zeta_1$ and $\zeta_2$ by
{\small \vspace{-\baselineskip} \[
   \zeta_{1,2} = \frac{1}{\sqrt{2}}\sqrt{\frac{\gamma^2\mu+\rho}
   {\alpha_1} + \frac{\mu}{\beta} \pm \sqrt{\left(
   \frac{\gamma^2\mu}{\alpha_1} + \frac{\mu}{\beta} +
   \frac{\rho}{\alpha_1}\right)^2 - \frac{4\rho\mu}
   {\beta\alpha_1}}}, \] }

\begin{proposition} \label{prop:obs2}
{If the condition
\begin{equation} \label{Hegseth}
   \frac{\zeta_1}{\zeta_2} \m\neq\m \frac{2n-1}{2m-1}, \ \
   \mbox{ for all } \ \ n,m\in\nline
\end{equation}
holds, then the spectrum $\sigma(\Ascr)$ consists only of
observable eigenvalues for the pair $(\Ascr,B^*)$.}
\end{proposition}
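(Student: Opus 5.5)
The plan is a direct computation. Since $\Ascr$ is skew-adjoint with compact resolvents (Proposition \ref{prop:exa2_skew}), $\sigma(\Ascr)$ consists only of eigenvalues, all on $i\rline$. So it suffices to take $\psi\in\Dscr(\Ascr)$ with $\Ascr\psi=i\lambda\psi$, $\lambda\in\rline$, and $B^*\psi=0$, and show that $\psi=0$ whenever \rfb{Hegseth} holds.

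\textbf{Reducing to an ODE.} The first two rows of $\Ascr\psi=i\lambda\psi$ give $\psi_3=i\lambda\psi_1$ and $\psi_4=i\lambda\psi_2$. The observation is $B^*\psi=\frac1h\psi_4(1)$, consistent with $y=\frac1h\dot p(1,t)$. Unobservability therefore means $\lambda\psi_2(1)=0$.
\begin{itemize}
\item \emph{Case $\lambda=0$.} Then $\psi_3=\psi_4=0$ and, after inverting the constant stiffness matrix (possible since $\alpha_1>0$), $\psi_1''=\psi_2''=0$. Together with $\psi_k(0)=0$ and $\psi_k'(1)=0$ this gives $\psi=0$.
\item \emph{Case $\lambda\neq0$.} Then $\psi_2(1)=0$. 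Writing $\Phi=[\psi_1\ \psi_2]^\top$, the remaining equations become
$$ \bbm{\alpha & -\gamma\beta\\ -\gamma\beta & \beta}\Phi'' \m=\m -\lambda^2\bbm{\rho&0\\0&\mu}\Phi,\qquad \Phi(0)=0,\ \ \Phi'(1)=0 .$$
\end{itemize}

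\textbf{Solving the ODE.} The stiffness matrix is positive definite because $\alpha\beta-\gamma^2\beta^2=\beta\alpha_1>0$. The generalized eigenproblem $\zeta^2 K v = D v$, with $K$ the stiffness matrix and $D=\mathrm{diag}(\rho,\mu)$, then has two positive roots $\zeta_1^2,\zeta_2^2$. A direct check of the characteristic polynomial $\beta\alpha_1\zeta^4-(\alpha\mu+\beta\rho)\zeta^2+\rho\mu=0$ should reproduce the formula for $\zeta_{1,2}$ given before the proposition. Let $v_1,v_2$ be the corresponding linearly independent eigenvectors; I will assume $\zeta_1\neq\zeta_2$, the generic case, which the square root in the formula reflects. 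The general solution with $\Phi(0)=0$ is
$$\Phi(x)=a_1v_1\sin(\lambda\zeta_1x)+a_2v_2\sin(\lambda\zeta_2x).$$
Because $v_1,v_2$ are independent, the condition $\Phi'(1)=0$ splits into $a_k\lambda\zeta_k\cos(\lambda\zeta_k)=0$ for $k=1,2$. Hence $a_k\neq0$ forces $\cos(\lambda\zeta_k)=0$, and then $\sin(\lambda\zeta_k)=\pm1$.

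\textbf{Using $\psi_2(1)=0$.} Suppose exactly one coefficient is nonzero, say $a_1$. Then $\psi_2(1)=\pm a_1(v_1)_2$, so I must show that the second component of each $v_k$ is nonzero. If $v_k=[1\ 0]^\top$, the second row of $\zeta^2Kv=Dv$ reads $-\gamma\beta\zeta_k^2=0$, which is impossible for $\gamma\beta\neq0$ (implicitly assumed, since the piezoelectric coupling is present). Hence $a_1=0$, a contradiction. Suppose instead both $a_1,a_2\neq0$. Then $\cos(\lambda\zeta_1)=\cos(\lambda\zeta_2)=0$, so $\lambda\zeta_1=(2n-1)\pi/2$ and $\lambda\zeta_2=(2m-1)\pi/2$ for some $n,m\in\nline$ (after replacing $\lambda$ by $|\lambda|$). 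This gives $\zeta_1/\zeta_2=(2n-1)/(2m-1)$, contradicting \rfb{Hegseth}. Hence $a_1=a_2=0$, so $\Phi=0$ and therefore $\psi=0$.

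\textbf{Expected obstacle.} The main work lies in the bookkeeping of the second step: matching the generalized eigenvalues with the stated $\zeta_{1,2}$, and checking that the eigenvectors have nonzero second component. I would first try to do both by explicitly writing $v_k=[\gamma\beta\zeta_k^2,\ \alpha\zeta_k^2-\rho]^\top$ from the first row of the eigenproblem and verifying its second entry against the second row.
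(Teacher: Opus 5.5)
Your argument is correct, and it takes a genuinely different route from the paper.

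\textbf{The paper's route.} The paper never solves the eigenvalue problem. It closes the loop with $u=-\tfrac12 y+V$ to obtain the damped generator $\Ascr_d$. It then quotes \cite[Theorem 3.8]{morris2014modeling} for ``$\Ascr_d$ generates a strongly stable semigroup $\iff$ \rfb{Hegseth}'', and \cite[Proposition 3.4]{art57} for ``strongly stable $\iff$ $(\Ascr_d,B^*)$ exactly observable in infinite time''. Finally it observes that an eigenvector $\psi$ of $\Ascr$ with $B^*\psi=0$ lies in $\Dscr(\Ascr_d)$ and satisfies $\Ascr_d\psi=\Ascr\psi$, which contradicts exact observability.

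\textbf{Your route.} You compute the spectrum directly, which is self-contained and gives more information:
\begin{enumerate}
\item the explicit spectrum $\pm i(2n-1)\pi/(2\zeta_k)$, with one-dimensional eigenspaces under \rfb{Hegseth};
\item essentially the converse, since if $\zeta_1/\zeta_2=(2n-1)/(2m-1)$ with $n\neq m$, the common eigenvalue has a two-dimensional eigenspace, on which the single linear condition $\psi_2(1)=0$ has a nontrivial solution;
\item an explicit appearance of the hypothesis $\gamma\neq0$.
\end{enumerate}
That last hypothesis is genuinely needed: for $\gamma=0$ the $z$-modes have $\psi_2\equiv0$ and are unobservable. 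The paper inherits it silently from the standing assumptions of \cite{morris2014modeling}.

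\textbf{Trade-off.} The paper's proof is shorter and ties the observability property to the known strong stabilization result for this beam. The price is that it relies on two external theorems, one applied only ``except for a minor modification'' of the system class.

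\textbf{One correction.} The condition $\zeta_1\neq\zeta_2$ is not an extra genericity assumption you must add. Since $\zeta_1\geq\zeta_2$ by the choice of sign, \rfb{Hegseth} with $n=m$ already excludes $\zeta_1/\zeta_2=1$. So your diagonalization of $K^{-1}D$ is justified by the hypothesis itself.
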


\vspace{-2mm}
\begin{pf} \m Define
$$ A_0 \m=\m \left[\begin{array}{ll} -\frac{\alpha}{\rho}\frac{\d^2}
   {\d x^2}\ & \ \frac{\gamma \beta}{\rho}\frac{\d^2}{\d x^2} \\
   \frac{\gamma \beta}{\mu}\frac{\d^2}{\d x^2}\ &\ -\frac{\beta}
   {\mu}\frac{\dd^2}{\dd x^2} \end{array}\right] \m,$$
{
where $\Dom{A_0}=\{\phi\in (\Hscr^2(0,1)\bigcap \Hscr^1_L(0,1))^2|\m
\phi_{1x}(1)=\phi_{2x}(1)=0 \}$. It is not difficult to verify that
$A_0>0$. The system \rfb{eq:piebeam_state}, with $w=0$, can be
rewritten as
\begin{equation} \label{eq:piebeam_state_b}
   \begin{cases} \dot{\psi}(t) \m=\m \left[\begin{array}{ll}
   0\ & \ I_{2\times 2} \\ -A_0\ & \ 0 \end{array}\right] \psi(t)
   + B u(t), \\ y(t) \m=\m B^* \psi(t). \end{cases}
\end{equation}
Consider the feedback law $u(t)=-\half y(t)+V(t)$, where $V$ is
a new input signal, which leads to the system
\begin{equation} \label{eq:piebeam_state_close}
   \begin{cases} \dot{\varphi}(t) \m=\m \Ascr_d \varphi(t) +
   B V(t),\\ y(t) \m=\m B^* \varphi(t), \end{cases}
\end{equation}
where \ $\Ascr_d = \begin{bmatrix} 0 & I_{2\times2} \\ -A_0 &
-\tfrac{1}{2}B_0B_0^* \end{bmatrix}$, \ and
$$ \Dom{\Ascr_d} \m=\m \Big\{ \varphi \in X_0\ \big|\ \alpha
   \varphi_{1x}(1) = \gamma\beta\varphi_{2x}(1),$$
\vspace{-5mm}
$$ \m\hspace{25mm}  \beta \varphi_{2x}(1)-\gamma \beta
   \varphi_{1x}(1) = -\frac{\varphi_4(1)}{2h^2} \Big\} \m.$$}

{
By \cite[Theorem 3.8]{morris2014modeling}, $\Ascr_d$ generates a
strongly stable semigroup if and only if \rfb{Hegseth} holds.
The system \rfb{eq:piebeam_state_close} can be rewritten in the
following form:
$$ \begin{cases}
   \begin{bmatrix} \ddot{z} \\ \ddot{p} \end{bmatrix} + A_0
   \begin{bmatrix} z \\ p \end{bmatrix} + \tfrac{1}{2} B_0 B_0^*
   \begin{bmatrix} \dot{z} \\ \dot{p} \end{bmatrix}
   \m=\m B_0 V(t), \\[6pt] y(t) \m=\m B_0^* \begin{bmatrix}
   \dot{z} \\ \dot{p} \end{bmatrix}, \end{cases}$$
which (except for a minor modification of the second equation) is in
the class of systems studied in \cite{art57}. We know from
\cite[Theorem 3.7]{morris2014modeling} that $\Ascr_d$ has compact
resolvents, hence its spectrum has no accumulation points. Now
according to \cite[Proposition 3.4]{art57}, $(\Ascr_d,B^*)$ is exactly
observable in infinite time if and only if $\Ascr_d$ generates a
strongly stable semigroup, which we know that it is the case if and
only if \rfb{Hegseth} holds. To conclude the proof, we have to show
that if $(\Ascr_d,B^*)$ is exactly observable in infinite time, then
all the eigenvalues of $\Ascr$ are observable for the pair
$(\Ascr,B^*)$.}

{
Suppose that $(\Ascr_d,B^*)$ is exactly observable in infinite time.
If it were true that $\Ascr$ has a nonobservable eigenvalue, then
there exists a corresponding eigenvector $\psi\in\Dscr(\Ascr)$ such
that $B^*\psi=0$. Then it is easy to see that $\Ascr_d\psi=\Ascr\psi$
so that $\psi$ is an eigenvector of $\Ascr_d$, contradicting that
$(\Ascr_d,B^*)$ is exactly observable in infinite time. Hence, all
the eigenvalues of $\Ascr$ are observable.} \hfill $\square$ \end{pf}

{
We mention that the condition \rfb{Hegseth} is numerically
unverifiable, because the set of all the numbers of the type
$(2n-1)/(2m-1)$ is dense in $[0,\infty)$.}


{
To solve the output regulator problem for the piezoelectric beam, we
interconnect the system $\Sig_p$ with the \textbf{IMC} controller
\rfb{minreal} with input $e=r-y$ and output $u^{out}$, as shown in
Fig.~\ref{fig:blocks}, so that $u=v+u^{out}$. The resonant frequencies
of the \textbf{IMC} are, of course, the frequencies $\o_j$ from
\rfb{exosig}. }

\begin{theorem} {
Suppose $i\o_j\in\rho(\Ascr)$ and $\PPP_u(i\o_j)\neq 0$ for all $j\in
\Jscr$. Then the plant $\Sig_p$ from \rfb{eq:piezobeam},
interconnected with the controller $\Sig_c$ from \rfb{minreal} as in
Fig.~\ref{fig:blocks}, is well-posed, input-output stable and
impedance passive from $\sbm{w\\ v\\ r}$ to $\sbm{y\\ u^{out}}$, with
the state space $X\times\cline^\ell$, where $X$ is given by
\rfb{statespace2} and $\cline^\ell$ is the state space of \m
$\Sig_c$.}

{
If $\frac{\zeta_1}{\zeta_2}\neq\frac{2n-1}{2m-1}$ for all $n,m\in
\nline$, then this controller solves the output regulator problem as
stated in Section \ref{sec1}, and moreover $\|e(t)\|^2$ tends to zero
when filtered.}
\end{theorem}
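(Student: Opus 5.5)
The plan is to reduce everything to Theorem \ref{thm:main}, applied with $\Sig_p$ given by \rfb{eq:piebeam_state}, $U=\cline$ (so $p=1$), $W=\cline^2$, generator $\Ascr$, control operator $B$, disturbance operator $B_d$ and observation operator $B^*$.

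\textbf{Step 1 (standing hypotheses).} By Proposition \ref{prop:exa2_skew}, $\Sig_p$ is well-posed and impedance passive from $u$ to $y$. Since $U=\cline$, $\PPP_u(i\o_j)$ is a scalar. It is therefore invertible exactly when $\PPP_u(i\o_j)\neq 0$. Together with $i\o_j\in\rho(\Ascr)$, this is Assumption \ref{ass_trans}.

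\textbf{Step 2 (first paragraph of the statement).} The first part of Theorem \ref{thm:main} now applies. It gives that $\Sig_L$, with input $\sbm{w\\ v\\ r}$, output $\sbm{y\\ u^{out}}$ and state space $X\times\cline^\ell$, is well-posed and input-output stable; the transfer function \rfb{tf:input_to_output_1} lies in $H^\infty_0$ because $B_d$ is bounded. For passivity I would follow the energy computation in Step 4 of the proof of Theorem \ref{theorem:well}, now with $w$ present. This gives
$$ \frac{\dd}{\dd t}\big(\|\psi(t)\|^2+\|q(t)\|^2\big) \m\leq\m 2\Re\left\langle \sbm{v\\ r},\sbm{y\\ u^{out}}\right\rangle + 2\Re\langle B_d w,\psi\rangle \m. $$
Impedance passivity pairs inputs with outputs of the same space. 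The channel $w$ has no collocated component among the outputs $\sbm{y\\ u^{out}}$, so the passivity assertion is the one of Theorem \ref{thm:main}: passivity on the port $\sbm{v\\ r}\mapsto\sbm{y\\ u^{out}}$, with $w$ an additional bounded input. The inequality above is exactly this supply-rate inequality when $w=0$. I expect this interpretive point to be the delicate part of the first paragraph, and I would state it explicitly rather than pass over it.

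\textbf{Step 3 (structural hypotheses for the second paragraph).} First, $\Ker B=\{0\}$ holds because $Bu=\sbm{0\\ 0\\ 0\\ h^{-1}\delta_1}u$ and $\delta_1\neq 0$ as a functional on $\Dscr(\Ascr^*)$. Second, $\Ascr$ has compact resolvents by Proposition \ref{prop:exa2_skew}, so $\sigma(\Ascr)$ consists of isolated eigenvalues and is countable. Third, if $\zeta_1/\zeta_2\neq(2n-1)/(2m-1)$ for all $n,m\in\nline$, then Proposition \ref{prop:obs2} shows that every eigenvalue of $\Ascr$, in particular every imaginary one, is observable for $(\Ascr,B^*)$. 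The one technical check is that the observation operator appearing in Theorem \ref{theorem:stab}, namely $[\CD]\sbm{x\\ 0}=Cx$, coincides with $B^*$ on $\Dscr(\Ascr)$. This holds because $y=B^*\psi$ has no feedthrough term.

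\textbf{Step 4 (conclusion).} The second part of Theorem \ref{thm:main} then gives strong stability of $\Sig_L$. For $w,v,r$ as in \rfb{exosig}, it also gives that $\Sig_c$ solves the output regulator problem of Section \ref{sec1}, and that $\|e(t)\|^2$ tends to zero when filtered. This last claim traces back to $S(i\o_j)=0$ from Theorem \ref{theorem:stab} together with Theorem \ref{Macron}.

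The main obstacle I foresee is Step 3. Specifically, Proposition \ref{prop:obs2} must really cover the imaginary eigenvalues, with the domain and boundary conventions of $\Ascr$ matching those of $\Ascr_d$. I would recheck this directly: for an eigenvector $\psi$ with $B^*\psi=\psi_4(1)/h=0$, show $\Ascr_d\psi=\Ascr\psi$ and that $\psi\in\Dscr(\Ascr_d)$.
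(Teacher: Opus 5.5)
Your route is the paper's own. You verify Assumption~\ref{ass_trans} (for $U=\cline$, invertibility of $\PPP_u(i\o_j)$ just means $\PPP_u(i\o_j)\neq 0$), $\Ker B=\{0\}$, countability of $\sigma(\Ascr)$ from the compact resolvent, and observability of every eigenvalue from Proposition~\ref{prop:obs2}. Then you quote both parts of Theorem~\ref{thm:main}. Your remark on passivity is also correct. With $w$ included, the input and output spaces do not even match. Theorem~\ref{thm:main}, which the paper's proof simply quotes, asserts passivity only on the port $\sbm{v\\ r}\mapsto\sbm{y\\ u^{out}}$.

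The step that does not hold is your claim in Step~2 that the whole transfer function \rfb{tf:input_to_output_1} lies in $H^\infty_0$ ``because $B_d$ is bounded''. Theorem~\ref{thm:main} gives only $\GGG^K\in H^\infty_0$ and properness of $\GGG^L$. The $w$-column $\sbm{S\PPP_w\\ -\gg S\PPP_w}$ is not covered. Boundedness of $B_d$ does not help, because $\PPP_w$ has poles at the infinitely many imaginary eigenvalues of the skew-adjoint $\Ascr$, while the closed loop is only strongly stable.

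Concretely, $\PPP_u$ is lossless, so between two consecutive imaginary poles $i\lambda_1,i\lambda_2$ it has a zero $i\mu_0$. There $S(i\mu_0)=1$, hence $S\PPP_w(i\mu_0)=\PPP_w(i\mu_0)$. Up to a bounded remainder, this equals $|\lambda_1-\lambda_2|^{-1}$ times a combination of $\langle B_dw,\phi_1\rangle$ and $\langle B_dw,\phi_2\rangle$ that vanishes only for specially matched $f_1,f_2$. In this example the two eigenvalue families $\pm i(2n-1)\pi/(2\zeta_1)$ and $\pm i(2m-1)\pi/(2\zeta_2)$ come arbitrarily close when $\zeta_1/\zeta_2$ is irrational. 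The hypothesis on $\zeta_1/\zeta_2$ gives no lower bound on these gaps, so in general $S\PPP_w\notin H^\infty_0$.

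You should read ``input-output stable'' exactly as you read ``impedance passive'': on the $\sbm{v\\ r}$ channel, with $w$ contributing only well-posedness. That is all Theorem~\ref{thm:main}, and hence the paper's proof, delivers. Nothing else in your argument depends on the false claim. In particular, the regulation part only needs $\PPP_w$ to be analytic at $i\o_j$, which follows from $i\o_j\in\rho(\Ascr)$.
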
 \vspace{-2mm}

\begin{pf} {
Clearly Assumption \ref{ass_trans} and also all the other requirements
in Theorem \ref{thm:main} are satisfied. From the first part of
Theorem \ref{thm:main} we obtain that the closed-loop system is
well-posed, input-output stable and impedance passive from $\sbm{w\\
v\\ r}$ to $\sbm{y\\ u^{out}}$.}

{
Clearly $\Ker\m B=\{0\}$. From Proposition \ref{prop:exa2_skew},
$\sigma(\Ascr)$ is a countable set of eigenvalues on $i\rline$. By
Proposition \ref{prop:obs2}, if the condition about $\zeta_1/\zeta_2$
holds, then there are no unobservable eigenvalues of $\Ascr$.
According to Theorem \ref{thm:main}, $\Sig_c$ solves the output
regulator problem for $\Sig_p$ and $\|e(t)\|^2$ tends to zero when
filtered.} \hfill $\square$ \m \end{pf}

\begin{figure}[htbp] \centering 
   \includegraphics[width=0.7\linewidth]{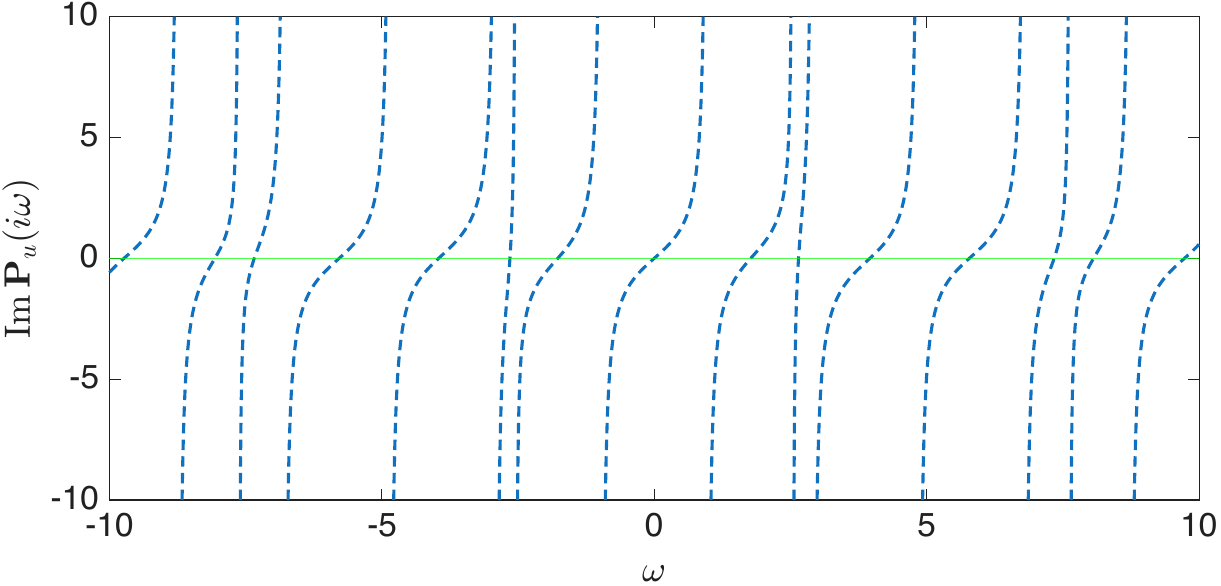}
   \caption{The function $\Im{\PPP_u}(i\o)$. } \label{fig1:IM2}
\end{figure}

\begin{figure}[htbp] \centering 
  \begin{subfigure}[t]{0.235\textwidth}
     \includegraphics[width=\linewidth]{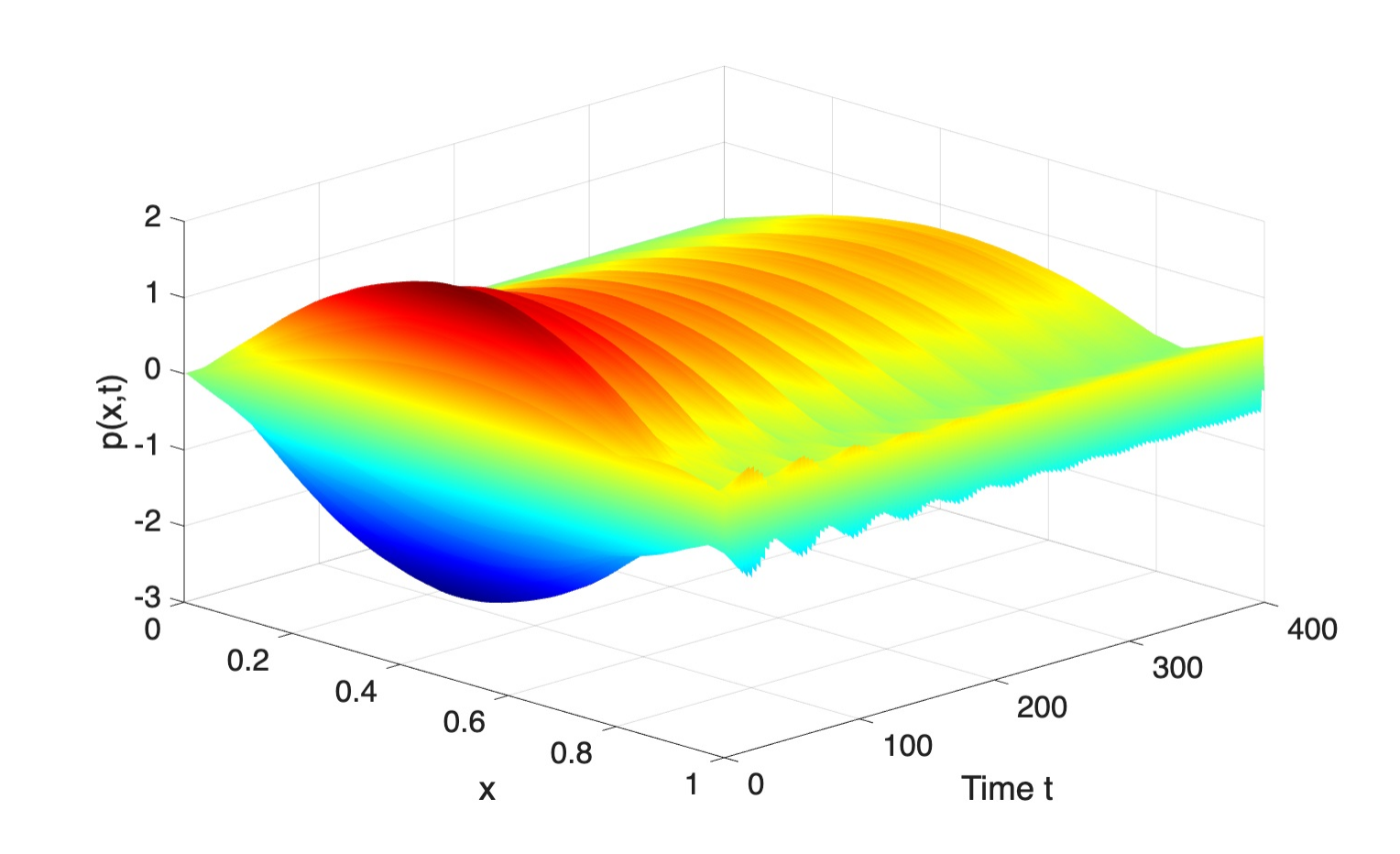} 
     \caption{Plot of $p(x,t)$.} \label{fig:sub1}
     \end{subfigure} \hfill
  \begin{subfigure}[t]{0.235\textwidth}
     \includegraphics[width=\linewidth]{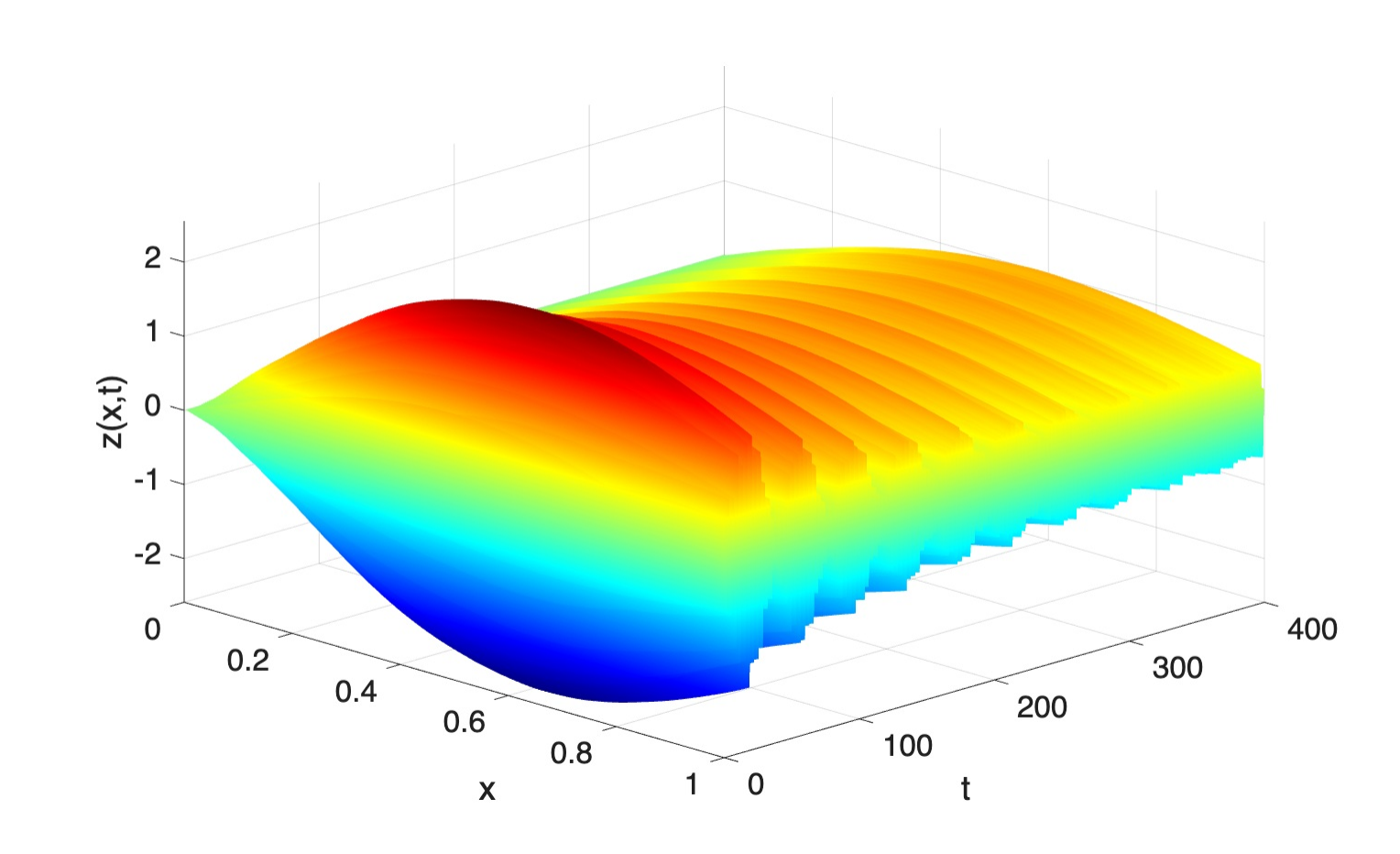} 
     \caption{Plot of $z(x,t)$.} \label{fig:sub2}
     \end{subfigure}
  \caption{System response of \dref{eq:piezobeam} with $\Sig_c$.}
  \label{fig:combined}
\end{figure}

\vspace{-2mm} {
{\bf Simulation setup:} \m In the closed-loop system of Fig.
\ref{fig:blocks}, we consider the disturbances $w_1(t)=w_2(t)=0$,
$v(t)=0.2\cos(2t+1)$, and the reference signal $r(t)=\sin(2t)$. The
system parameters are $\rho=\gamma=\beta=\mu=h=1$ and $\alpha=2$. The
parameters of the controller are selected as $c_1=1$, $\o_1=2$, and
$d=1$, and its impedance passive and minimal realization $\Sig_c$ is
\begin{align*}
   \dot{q}(t) &=\m \bbm{0 & -2 \\ 2 & 0} q(t)+\bbm{1 \\ 1} e(t),\\
   u^{out}(t) &=\m \bbm{1 & 1} q(t) + e(t).
\end{align*}}

{\bf Results:} A numerical computation indicates that $\Re\PPP_u=0$ on
$i\rline\cap \rho(A)$, but this is not important, what matters is that
$|\Im\m \PPP_u(\pm 2i)|\approx 0.6$, so that $\PPP_u(\pm 2i)$ is
invertible. Fig.~\ref{fig:combined} shows the state evolution of the
system governed by \rfb{eq:piezobeam} with the \textbf{IMC}
controller. Fig.~\ref{fig:combined2} further validates the
controller's effectiveness by illustrating the tracking performance
under the specified disturbances and reference signal.

\begin{figure}[htbp] \centering 
   \begin{subfigure}[t]{0.235\textwidth}
   \includegraphics[width=\linewidth]{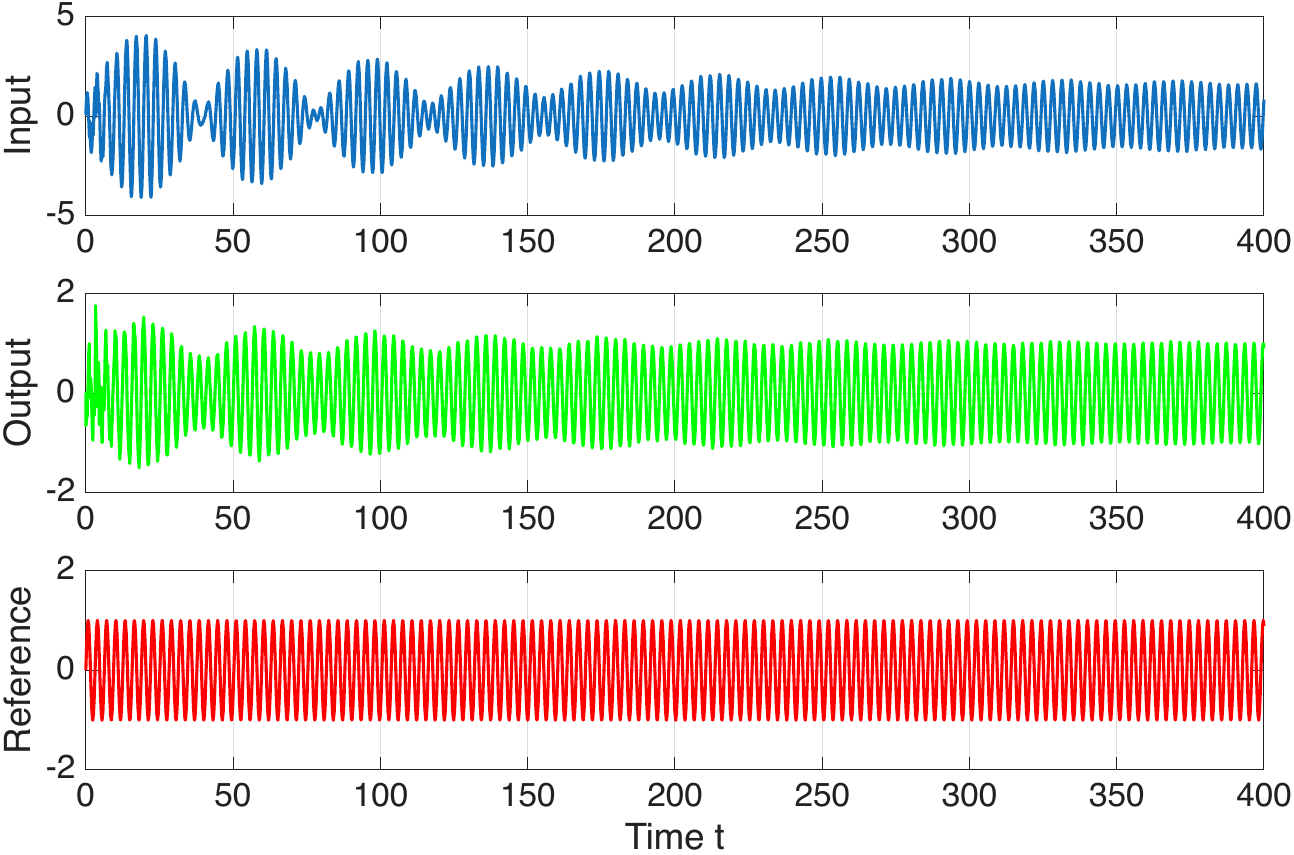}
   \caption{The input $u$, output $y$ and reference $r$.}
   \label{fig:sub3} \end{subfigure}
   \begin{subfigure}[t]{0.235\textwidth}
   \includegraphics[width=\linewidth]{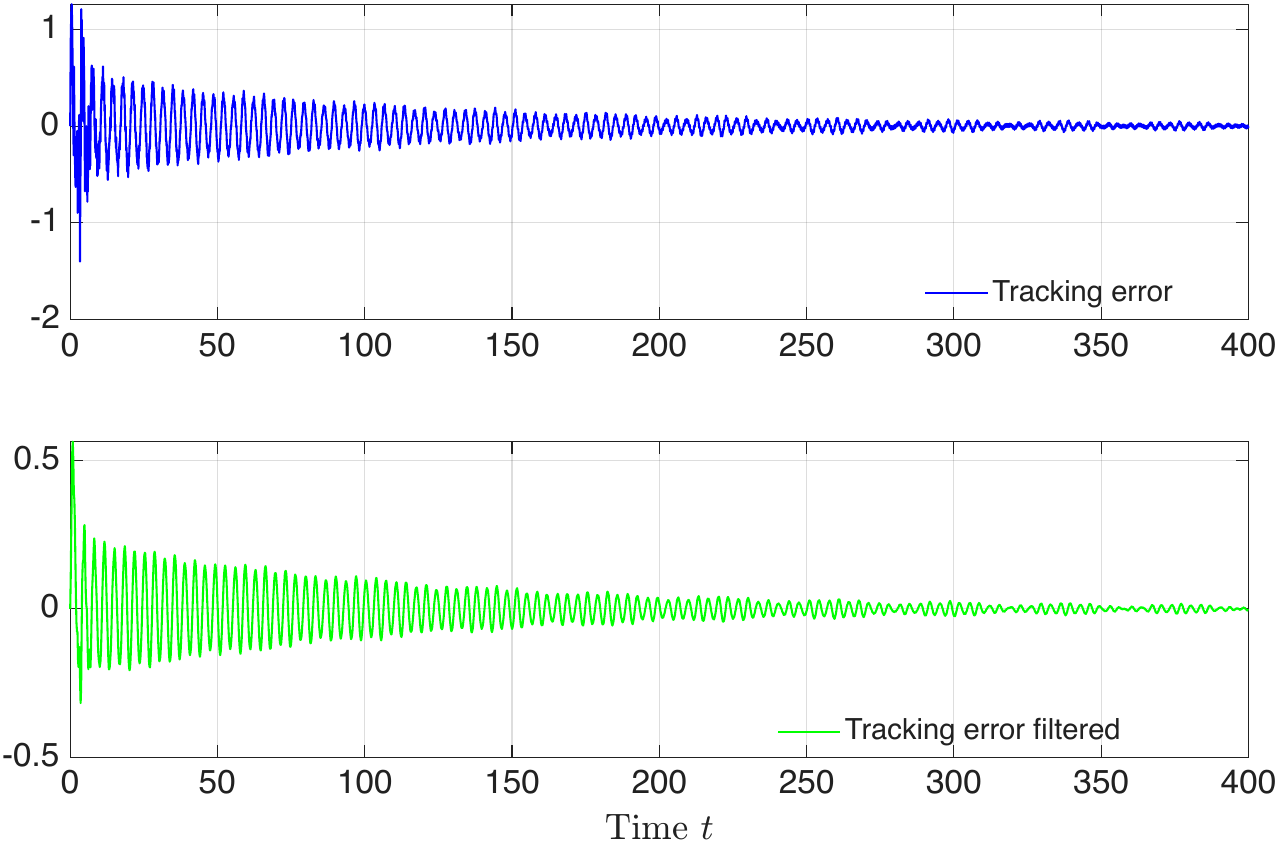}
   \caption{The tracking error $e$.} \label{fig:sub4} \end{subfigure}
   \caption{Tracking performance of \dref{eq:piezobeam} with
   $\Sig_c$.} \label{fig:combined2} \vspace{-3mm}
\end{figure}

\section{Appendix} \setcounter{equation}{0} 

Here we prove three facts stated in Section \ref{sec2}.

\noindent
{\bf Fact I}: The function 
$v(t)=\sin (t^2)$ is not in $L^2[0,\infty)$, and $v$ tends to zero
when filtered.

{\bf Proof.} \m From
$$ \begin{array}{l} \int_0^\infty |\sin(s^2)|^2\dd s \m=\m \int_0
   ^\infty \frac{|\sin\tau|^2}{2\sqrt{\tau}}\dd\tau\\ =\m \sum_{k=0}
   ^\infty \int_0^\pi \frac{\sin^2\tau}{2\sqrt{\tau+k\pi}} \dd\tau
   \m\geq\m \sum_{k=0}^\infty \int_0^\pi \frac{\sin^2\tau}{2
   \sqrt{\pi+k\pi}}\dd\tau\\ =\m \sum_{k=0}^\infty \frac{\pi}{4
   \sqrt{\pi+k\pi}} \m=\m \infty \end{array}$$
we obtain that $v\notin L^2[0,\infty)$.

Next, we show that $v$ tends to zero when filtered. Defining $t_n=
\sqrt{2n\pi+3\pi}$, where $n\in\nline$, we calculate
$$ \scalebox{0.9}{$\displaystyle
   \int_{t_0}^{t_n} e^s \sin(s^2) \dd s \m=\m \sum_{k=1}^n
   \int_{t_{k-1}}^{t_k} e^s \sin(s^2) \dd s$}$$
\begin{equation} \label{int-sint2-estm}
   \scalebox{0.9}{$\displaystyle \begin{aligned}
   & = \frac{1}{2} \sum_{k=1}^n \biggl[
   \int_{2k\pi+\pi}^{2k\pi+2\pi} \frac{e^{\sqrt{\tau}}}{\sqrt{\tau}}
   \sin(\tau) d\tau + \int_{2k\pi+2\pi}^{2k\pi+3\pi}
   \frac{e^{\sqrt{\tau}}}{\sqrt{\tau}}\sin(\tau) d\tau \biggr] \\
   & = \frac{1}{2}\sum_{k=1}^n \biggl[\int_{2k\pi + \pi}^{2k\pi+2\pi}
   \frac{e^{\sqrt{\tau}} \sin(\tau)}{\sqrt{\tau}} - \frac{e^{\sqrt{
   \tau+\pi}}\sin(\tau)}{\sqrt{\tau+\pi}}\dd\tau \biggr] \\
   & = \frac{1}{2} \sum_{k=1}^{n} \int_{2k\pi+\pi}^{2k\pi+2\pi}
   [h(\tau+\pi)-h(\tau)](-\sin(\tau)) \dd\tau \end{aligned}$}
\end{equation}
where $h(s)=\frac{e^{\sqrt{s}}}{\sqrt{s}}$, $s\geq\pi$. Since $h(s)$
is strictly increasing function on $s\in[\pi,+\infty)$ due to
$h'(s)=({2s})^{-1}{e^{\sqrt{s}}(1-s^{-1/2})}>0$ for $s\geq\pi$, we
obtain for $\tau\in[2k\pi+\pi,2k\pi+2\pi]$,
$$ 0 \m<\m h(\tau+\pi)-h(\tau)\leq h(2k\pi+3\pi)-h(2k\pi+\pi),$$
which, jointly with \dref{int-sint2-estm}, gives
\begin{equation*}
   \begin{array}{l} 0 \m<\m \int_{t_0}^{t_n} e^s\sin(s^2)\dd s\\
   \ \ \ \leq\m \half\sum_{k=1}^n [h(2k\pi+3\pi)-
   h(2k\pi+\pi)]\times \\
   \qquad \int_{2k\pi+\pi}^{2k\pi+2\pi} (-\sin(\tau))\dd\tau \\
   \ \ \ =\m h(2n\pi+3\pi)-h(3\pi). \end{array}
\end{equation*}
Combining these facts, it follows that
\begin{equation} \label{int-sint2-estm-ii-}
   \begin{array}{l} 0 \m\leq\m \lim_{n\to+\infty}\frac{\int_{t_0}^{t_n}
   e^s \sin(s^2)\dd s}{e^{t_n}}\\ \ \ \leq\m \lim_{n\to+\infty}
   \frac{h(2n\pi+3\pi)-h(3\pi)}{e^{t_n}} \m=\m 0. \end{array}
\end{equation}
For sufficient large $t$, there exists a $n\in\nline$ such that
$t\in[t_n,t_{n+1}]$. From \dref{int-sint2-estm-ii-} and $0<
\frac{e^{t_n}}{e^t}\leq1$, we have
\begin{equation} \label{int-sint2-estm-ii-fn}
   \begin{array}{l} \lim_{t\to+\infty}\int_0^t e^{-(t-s)}\sin(s^2)\dd s\\
   =\m \lim_{t\to+\infty}\Big[\frac{\int_0^{t_1} e^s\sin(s^2)\dd s}{e^{t}}
   + \frac{\int_{t_0}^{t_n} e^{s}\sin(s^2)\dd s}{e^{t_n}}\frac{e^{t_n}}
   {e^t}\\ \hspace{1.98cm} + \frac{\int_{t_n}^t e^s\sin(s^2)\dd s}{e^t}
   \Big]\\ =\m \lim_{t\to+\infty} \frac{\int_{t_n}^t e^s \sin(s^2)\dd s}
   {e^t}\\ =\m \lim_{t\to+\infty} \frac{e^{\xi_t}\sin(\xi_t^2)(t-t_n)}
   {e^t} \m=\m 0. \end{array}
\end{equation}
In the last step of \dref{int-sint2-estm-ii-fn}, we have used that
$$ 0 \leq t-t_n \leq t_{n+1}-t_n = \frac{2\pi}{t_{n+1}+t_n}\to 0
   \mbox{ as } n\to+\infty$$
and the mean value theorem to guarantee that $\xi_t\in(t_n,t)$ and
$0<\frac{e^{\xi_t}}{e^t}\leq 1$. \hfill $\square$ \m

\noindent
{\bf Fact II}: $v(t)=|\sin t|^{t^3}$ does not tend to zero but $v\in
L^2[0,\infty)$ and thus $v$ tends to zero when filtered.

\begin{pf} It is clear that $v(t)=|\sin t|^{t^3}$ does not tend to
zero. In order to show that $v$ tends to zero when filtered, by
Proposition \ref{Hodeida}, it suffices to show that $v\in L^2[0,
\infty)$. Note that
\begin{equation} \label{app-estim-sint-t3-in}
  \begin{array}{l} \int_{0}^{+\infty}|\sin s|^{2s^3}\dd s \m=\m
    \sum_{j=0}^\infty \int_{j\pi}^{(j+1)\pi} |\sin s|^{2s^3}\dd s\\ =\m
    \sum_{j=0}^\infty \int_0^\pi |\sin \tau|^{2(\tau+j\pi)^3}\dd\tau\\
    \leq\m \int_0^\pi |\sin \tau|^{2\tau^3}\dd\tau + \sum_{j=1}^\infty
    \int_0^\pi |\sin \tau|^{(2j)^3}\dd\tau. \end{array}
\end{equation}
By the well-known Wallis formula, see for instance
\cite{Miller-pi2008},
\begin{equation} \label{Wallis-Eq1}
  \int_0^{\pi/2}\sin^{2n}x\dd x \m=\m \frac{(2n-1)!!}{(2n)!!}
  \frac{\pi}{2},
\end{equation}
where $m!!=m(m-2)(m-4)...$ \ (the product stops when either 1 or 2
is reached) and using the Wallis limit (see the same reference)
\begin{equation} \label{Wallis-Eq2}
  \lim_{n\to+\infty} \Big[ \frac{(2n)!!}{(2n-1)!!}\Big]^2\frac{1}
  {2n+1} \m=\m \frac{\pi}{2},
\end{equation}
we derive that
$$ \begin{array}{l} \int_0^\pi |\sin \tau|^{(2j)^3} \dd\tau \m=\m
   \frac{((2j)^3-1)!!}{((2j)^3)!!}\frac{\pi}{2}\\ \hspace{2.7cm}
   \thicksim\frac{1}{\sqrt{(2j)^3+1}}\sqrt{\frac{\pi}{2}},\ \
   \mbox{as }j\to+\infty. \end{array}$$
Noting that $ \sum_{j=1}^\infty \frac{1}{\sqrt{(2j)^3+1}} \m<\m
\infty,$ by \dref{app-estim-sint-t3-in}, we obtain that \
$\int_0^\infty|\sin s|^{2s^3}\dd s < \infty$, i.e., $v\in L^2[0,
\infty)$. \hfill $\square$ \end{pf}

\noindent
{\bf Fact III}: The function $v:[0,\infty)\rarrow\rline$ defined by
$v(t)=|\sin t|^t$ is not in $L^2[0,\infty)$, but $v\rarrow 0$ when
filtered.

\begin{pf} \m We first show that $v\notin L^2[0,\infty)$. We see
that
\begin{equation} \label{app-estim-sint-tt-sta}
   \begin{array}{l} \int_0^{+\infty} |\sin s|^{2s}\dd s \m=\m
   \sum_{j=0}^\infty \int_{j\pi}^{(j+1)\pi} |\sin s|^{2s}\dd s\\
   =\m \sum_{j=0}^\infty \int_0^\pi |\sin \tau|^{2(\tau+j\pi)}\dd
   \tau \m\geq\m \sum_{j=1}^\infty \int_0^\pi |\sin \tau|^{8j}\dd
   \tau. \end{array}
\end{equation}
By \dref{Wallis-Eq1} and \dref{Wallis-Eq2}, we have
\begin{equation} \label{app-estim-sint-tt-sta-1}
   \begin{array}{l} \int_0^\pi |\sin \tau|^{8j}\dd\tau \m=\m
   \frac{(8j-1)!!}{(8j)!!}\frac{\pi}{2}\\ \hspace{2.3cm}\thicksim
   \frac{1}{\sqrt{8j+1}}\sqrt{\frac{\pi}{2}},\ \ \mbox{as }\
   j\to\infty. \end{array}
\end{equation}
Since
$\sum_{j=1}^\infty \frac{1}{\sqrt{8j+1}} \m= \infty \m,$
by \rfb{app-estim-sint-tt-sta} and \dref{app-estim-sint-tt-sta-1},
we obtain that $\int_0^\infty |\sin s|^{2s}\dd s=\infty$, so that
$v\notin L^2[0,\infty)$.

Let $y(t)=\frac{1}{\pi}\int_{t-\pi}^t v(s)\dd s$. It is easy to see
that $\displaystyle \lim_{t\to+\infty} y(t)=0$. By Prop.
\ref{Fordo}, $v\rarrow 0$ when filtered. \hfill $\square$ \m
\end{pf}

\bibliographystyle{plain}


\parpic{\includegraphics[width=25mm,height=32mm,clip,
  keepaspectratio]{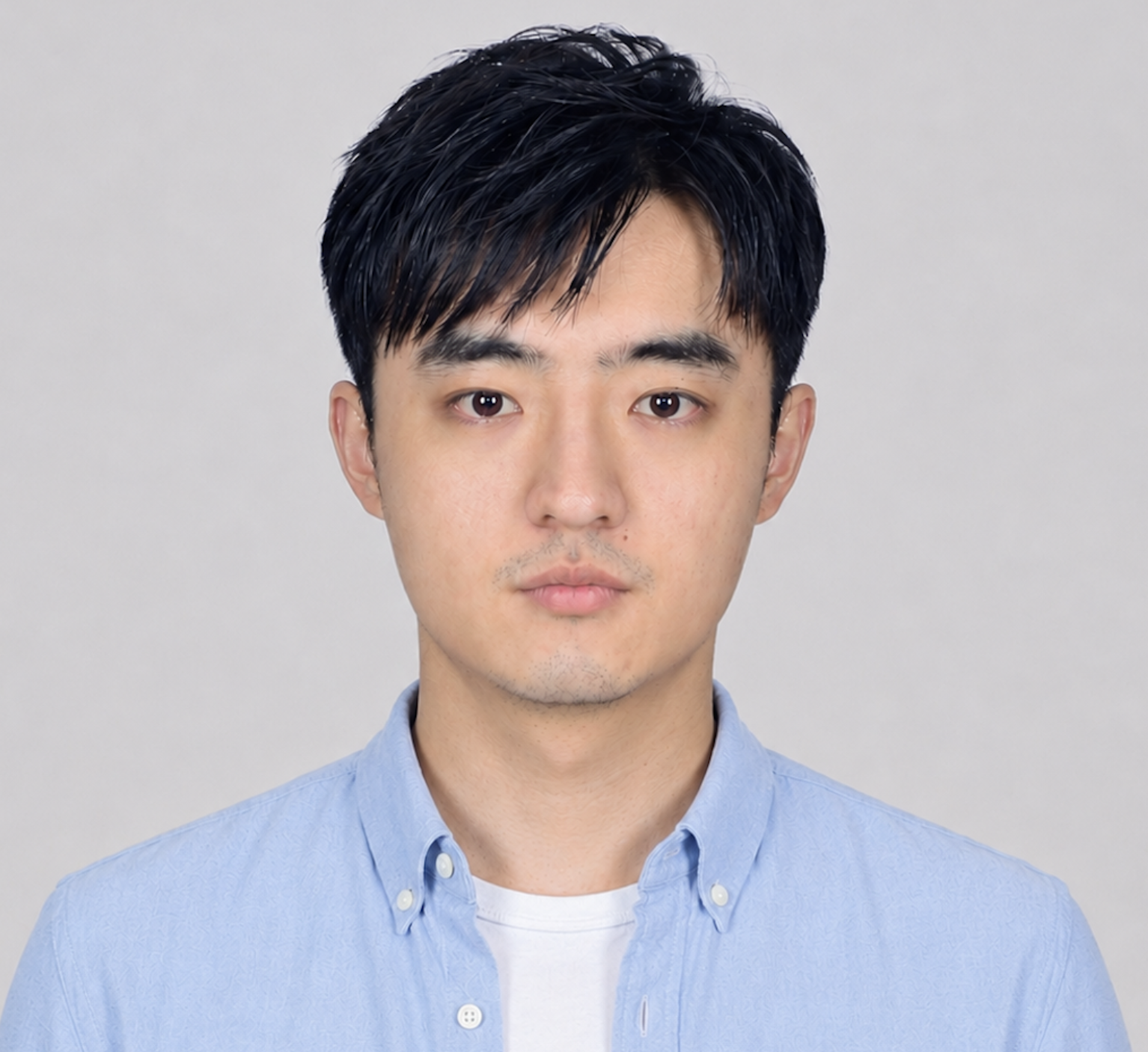}}
\noindent \textbf{Bingsen Li} received the PhD degree from the
  School of Mathematics and Statistics, Central South University,
  China in 2026. He is also currently a PhD student with the
  School of Electrical and Computer Engineering, Tel Aviv
  University, Israel. His current research interests include
  distributed parameter systems, output regulation, and sampled
  data system.

\parpic{\includegraphics[width=25mm,height=32mm,clip,
  keepaspectratio]{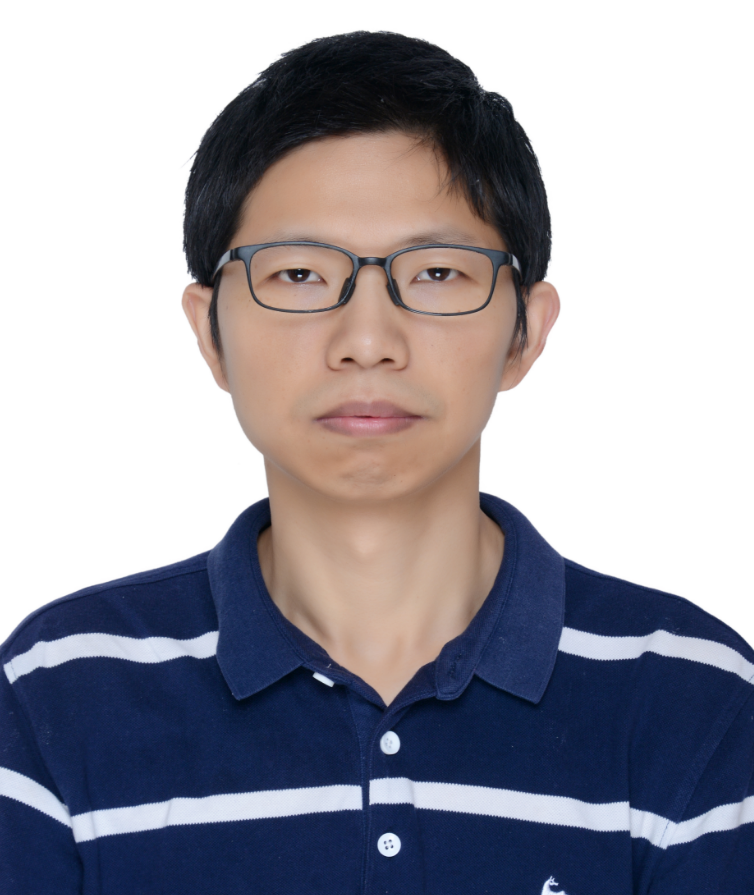}}
\noindent \textbf{Hua-Cheng Zhou} received his PhD degree in
  applied mathematics from the Academy of Mathematics and System
  Sciences, Chinese Academy of Sciences, Beijing, China in 2015.
  From 2015 to 2018, he was a Postdoctoral Fellow at the School
  of Electrical Engineering, Tel Aviv University, Tel Aviv,
  Israel. Since 2018, he has been with the School of Mathematics
  and Statistics, Central South University, where he is a Full
  Professor in Applied Mathematics. His current research interests
  focus on distributed parameter systems control.

\noindent \textbf{George Weiss} is a Professor in the School of
  Electrical and Computer Engineering at Tel Aviv University. He
  received the PhD degree in applied mathematics from The Weizmann
  Institute, Rehovot, Israel, in 1989. He was with Brown
  Univ., Providence, RI, Virginia Tech, Blacksburg, VA, Ben-Gurion
  Univ., Beer Sheva, Israel, the Univ. of Exeter, UK, and Imperial
  College London, UK. His current research interests include
  distributed parameter systems, operator semigroups, passive
  systems (linear and nonlinear), power electronics, and the grid
  integration of distributed energy sources.

\end{document}